\DeclareMathAlphabet{\mathpzc}{OT1}{pzc}{m}{it}
\newtheorem{remark}[theorem]{Remark}
\numberwithin{equation}{section}
\newcommand{\calL}{{\mathcal L}}
\DeclareMathOperator*{\esssup}{esssup}
\title{A PDE approach to space-time \\ fractional parabolic problems\thanks{RHN and EO are partially supported by NSF grants DMS-1109325 and DMS-1411808. EO is additionally supported by the Conicyt-Fulbright Fellowship Beca Igualdad de Oportunidades. AJS is partially supported by NSF grant DMS-1418784.}}
\author{Ricardo H.~Nochetto\thanks{Department of Mathematics and Institute for Physical
Science and Technology, University of Maryland, College Park, MD 20742, USA. \texttt{rhn@math.umd.edu}},
\and
Enrique Ot\'arola\thanks{Department of Mathematics, University of Maryland,
College Park, MD 20742, USA and Department of Mathematical Sciences, George Mason University, Fairfax, VA 22030, USA. \texttt{kike@math.umd.edu}.}
\and
Abner J.~Salgado\thanks{Department of Mathematics, University of Tennessee, Knoxville, TN 37996, USA.
\texttt{asalgad1@utk.edu}}}
\date{Draft version of \today.}
\begin{document}

\maketitle
\begin{abstract}
We study solution techniques for parabolic equations with fractional diffusion and Caputo fractional time derivative, the latter being discretized and analyzed in a general Hilbert space setting. 
The spatial fractional diffusion is realized as the Dirichlet-to-Neumann map for a nonuniformly elliptic problem posed on a semi-infinite cylinder in one more spatial dimension. We write our evolution problem as a quasi-stationary elliptic problem with a dynamic boundary condition. We propose and analyze an implicit fully-discrete scheme: first-degree tensor product finite elements in space and an implicit finite difference discretization in time. We prove stability and error estimates for this scheme.
\end{abstract}

\begin{keywords}
Fractional derivatives and integrals, fractional diffusion,
weighted Sobolev spaces, finite elements, stability, anisotropic estimates,
fully-discrete methods.
\end{keywords}

\begin{AMS}
26A33,   
65J08,   
65M12,   
65M15,   
65M60,   
65R10.   
\end{AMS}

\section{Introduction}
\label{sec:introduccion}

We are interested in the numerical approximation of an initial boundary value problem for a space-time fractional parabolic equation. Let $\Omega$ be an open and bounded subset of $\R^n$ ($n\ge1$), with boundary $\partial\Omega$. Given $s \in (0,1)$, $\gamma \in (0,1]$, a forcing function $f$, and an initial datum $\usf_0$, we seek
$\usf$ such that
\begin{equation}
\label{fractional_heat}
  \partial^{\gamma}_t \usf + \mathcal{L}^s \usf = f \ \text{ in } \Omega\times(0,T),
  \quad
  \usf(0) = \usf_0 \ \text{ in } \Omega,
  \quad
  \usf = 0 \ \text{on } \partial\Omega\times(0,T).
\end{equation}
Here $\calLs$, $s \in (0,1)$, is the fractional power of the second order elliptic operator
\begin{equation}
\label{second_order}
 \mathcal{L} w = - \DIV_{x'} (A \nabla_{x'} w ) + c w,
\end{equation}
where $0 \leq c \in L^\infty(\Omega)$ and $A \in C^{0,1}(\Omega,\GL(n,\R))$ is symmetric and positive definite.

The fractional derivative in time $\partial^{\gamma}_t$ for $\gamma \in (0,1)$ is understood as  \textit{the left-sided Caputo fractional derivative of order $\gamma$} with respect to $t$, which is defined by
\begin{equation}
\label{caputo}
\partial^{\gamma}_t \usf(x,t) := \frac{1}{\Gamma(1-\gamma)} \int_{0}^t \frac{1}{(t-r)^{\gamma}} \frac{\partial \usf(x,r)}{\partial r} \diff r,
\end{equation}
where $\Gamma$ is the Gamma function. For $\gamma = 1$, we consider the usual derivative $\partial_t$.

One of the main difficulties in the study of problem \eqref{fractional_heat} is the nonlocality of the fractional time derivative and the fractional space operator (see \cite{CT:10,CS:07,CDDS:11,fractional_book,Samko,ST:10}). A possible approach to overcome the nonlocality in space is given by the seminal result of Caffarelli and Silvestre in $\R^n$ \cite{CS:07} and its extensions to bounded domains \cite{CT:10,CDDS:11,ST:10}. Fractional powers of $\mathcal{L}$ can be realized as an operator that maps a Dirichlet boundary condition to a Neumann condition via an extension problem 
on $\C = \Omega \times (0,\infty)$. This extension is the following mixed boundary value problem (see \cite{CT:10,CS:07,CDDS:11,ST:10} for details):
\begin{equation}
\label{alpha_harm_L}
  \mathcal{L}\ue - \frac{\alpha}{y}\partial_{y}\ue - \partial_{yy}\ue = 0 \ \text{in } \C,
  \
  \ue = 0 \  \text{on } \partial_L \C,
  \
  \partial_{\nu}^{\alpha} \ue  = d_s f \ \text{on } \Omega \times \{0\},
\end{equation}
where $\partial_L \C= \partial \Omega \times [0,\infty)$ is the lateral boundary of $\C$, $\alpha = 1-2s \in (-1,1)$,
$d_s=2^\alpha \Gamma(1-s)/\Gamma(s)$ and the conormal exterior derivative of $\ue$ at $\Omega \times \{ 0 \}$ is
\begin{equation}
\label{def:lf}
\partial_{\nu}^{\alpha} \ue  = -\lim_{y \rightarrow 0^+} y^\alpha \ue_y.
\end{equation}

We will call $y$ the \emph{extended variable} and the dimension $n+1$ in $\R_+^{n+1}$ the \emph{extended dimension} of problem \eqref{alpha_harm_L}. The limit in \eqref{def:lf} must be understood in the sense of distributions; see \cite{CS:07,ST:10}. As noted in \cite{CT:10,CS:07,CDDS:11,ST:10}, we can relate the fractional powers of the operator $\calL$ with the Dirichlet-to-Neumann map of problem \eqref{alpha_harm_L}:
$
  d_s \calLs u = \partial_{\nu}^{\alpha} \ue
$
in $\Omega$. Notice that the differential operator in \eqref{alpha_harm_L} is
$
  -\DIV \left( y^{\alpha} \mathbf{A} \nabla \ue \right) + y^{\alpha} c\ue
$
where, for all $(x',y) \in \C $,  $\mathbf{A}(x',y) =  \textrm{diag} \{A(x'),1\} \in C^{0,1}(\C,\GL(n+1,\R))$.

The Caffarelli-Silvestre result has also been employed for the study of evolution equations with space fractional diffusion. For instance, by using this technique, H{\"o}lder estimates for the fractional heat equation were proved in \cite{SilvestreDFD}. We thus rewrite \eqref{fractional_heat} as a quasi-stationary elliptic problem with dynamic boundary condition:
\begin{equation}
\label{heat_alpha_extension}
\begin{dcases}
  -\DIV \left( y^{\alpha} \mathbf{A} \nabla \ve \right) + y^{\alpha} c\,\ve = 0 \ \textrm{in } \C\times(0,T), &
  \ve = 0 \ \textrm{on }\partial_L \C\times (0,T), \\
   d_s \partial_t^{\gamma} \ve +\partial_{\nu}^{\alpha} \ve = d_s f \ \textrm{on } (\Omega \times \{ 0\})\times(0,T), &
   \ve = \usf_0, \ \textrm{on }\Omega \times \{ 0\}, t=0.
\end{dcases}
\end{equation}

Before proceeding with the description and analysis of our method, let us give an overview of those advocated in the literature. The design of an efficient technique to treat numerically the left-sided Caputo fractional derivative of order $\gamma$ is not an easy task. The main difficulty is given by the nonlocality of the operator $\partial_t^{\gamma}$.
There are several approaches via finite differences, finite elements and spectral methods. For instance, a finite difference scheme is proposed and analyzed in \cite{LLX:11,LinXu:07} to deal with $\partial_t^{\gamma}$ and the so-called fractional cable equation. Semidiscrete finite element methods have been analyzed in \cite{BLZ:13} for \eqref{fractional_heat} with $\gamma \in (0,1)$ and $s=1$. Approaches via discontinuous Galerkin methods have been studied in \cite{MM:09,MM:13} for an alternative formulation of \eqref{fractional_heat} with $\gamma \in (0,2)$ and $s=1$. 
We refer to \cite[\S1]{MM:13} for an overview of the state of the art.

The finite difference scheme proposed in \cite{LLX:11,LinXu:07} has a consistency error $\mathcal{O}(\tau^{2-\gamma})$, where $\tau$ denotes the time step. This error estimate, however, requires a rather strong regularity assumption in time which is problematic; see \cite{M:10} and \S\ref{sub:discretization_beta}. Since $0 < \gamma
<1$, derivatives of the solution $\usf$ of \eqref{fractional_heat} with respect to $t$ are unbounded as $t \downarrow 0$.
In this work, we examine the singular behavior of $\partial_{t} \usf$ and $\partial_{tt} \usf$ when $t \downarrow 0$
and derive realistic time-regularity estimates for $u$; see also \cite{M:10,MM:13}. Using these refined results we analyze the truncation error and show discrete stability. The latter leads to an energy estimate for parabolic problems with
fractional time derivative in a general Hilbert space setting, written in terms of a fractional integral of a norm of $u$. We remark that H{\"o}lder regularity results for a parabolic equation with Caputo fractional time derivative have been recently establishedby Allen, Caffarelli and Vasseur in \cite{ACV:15}.

In prior work \cite{NOS} we used the Caffarelli-Silvestre extension to discretize the fractional space operator and obtained near-optimal error estimates in weighted Sobolev spaces for the extension.
We refer the reader to \cite{NOS} for a an overview of the existing numerical techniques
to solve elliptic problems involving fractional diffusion together with their advantages and disadvantages.
In this paper, we will adapt the approach developed in \cite{NOS} to the parabolic case.

We use the extension \eqref{heat_alpha_extension} to find the solution of \eqref{fractional_heat}: given $f$ and $\usf_0$, we solve \eqref{heat_alpha_extension}, thus obtaining a function $\ve: \C \times (0,T) \to \R$. Letting $\usf: \Omega \times (0,T) \to \R$ be $\usf(x',t) := \ve(x',0,t)$, we obtain  the solution of \eqref{fractional_heat}. The main objective of this work is to describe and analyze a fully discrete scheme for problem \eqref{heat_alpha_extension}.  We use implicit finite differences for time discretization \cite{LLX:11,LinXu:07}, and first degree tensor
product finite elements for space discretization.

The outline of this paper is as follows. In section~\ref{sec:Prelim} we introduce some terminology used throughout this work. We recall the definition of the fractional powers of elliptic operators via spectral theory in \S\ref{sub:fractional_L}, and in \S\ref{sub:CaffarelliSilvestre} we introduce the functional framework that is suitable to study problems \eqref{fractional_heat} and \eqref{heat_alpha_extension}. In \S\ref{sub:solution_representation},
we derive a representation for the solution of problem \eqref{alpha_harm_L}. We present regularity results in space and time in \S\ref{sub:space_regularity} and \S\ref{sub:time_regularity}, respectively. The time discretization of problem \eqref{fractional_heat} is analyzed in section~\ref{sec:time_discretization}: the case $\gamma = 1$ is discretized by the standard backward Euler scheme whereas, for $\gamma \in (0,1)$, we consider the finite difference approximation of \cite{LLX:11,LinXu:07}. For both cases we derive stability results and a novel energy estimate
for parabolic problems with fractional time derivative in a general Hilbert space setting. We discuss error estimates for semi-discrete schemes in \S\ref{sub:ErrTimedisc}. The space discretization of problem \eqref{heat_alpha_extension}
begins in section~\ref{sec:space_discretization}: in \S\ref{sub:truncation}, we introduce a truncation of the domain $\C$ and study some properties of the solution of a truncated problem; in \S\ref{sec:sub:space_discretization} we present
the finite element approximation to the solution of \eqref{heat_alpha_extension} in a bounded domain and in \S\ref{sec:sub:elliptic_projector} we study a weighted elliptic projector and its properties.
In section~\ref{sec:fully_scheme}, we deal with fully discrete schemes and derive error estimates for all $\gamma \in (0,1]$ and $s \in (0,1)$. 

\section{Solution representation and regularity}
\label{sec:Prelim}
Throughout this work $\Omega$ is an open, bounded and connected subset of $\R^n$, $n\geq1$, with polyhedral boundary $\partial\Omega$. We define the semi-infinite cylinder and its lateral boundary, respectively, by $\C = \Omega \times (0,\infty)$ and $\partial_L \C  = \partial \Omega \times [0,\infty)$. Given $\Y>0$, we define the truncated cylinder
$
  \C_\Y = \Omega \times (0,\Y)
$
and $\partial_L\C_\Y$ accordingly. If $x\in \R^{n+1}$, we write
$
 x = (x',y),
$
with $x' \in \R^n$ and $y\in\R$. If $\Xcal$ is a normed space, $\Xcal'$ denotes its dual and $\|\cdot\|_{\Xcal}$ its norm.
The relation $a \lesssim b$ means $a \leq cb$, with a nonessential constant $c$ that might change at each occurrence.

If $T >0$ and $\phi: \D \times(0,T) \to \R$, with $\D$ a domain in $\R^{N}$ ($N \geq 1$), we consider $\phi$ as a function of $t$ with values in a Banach space $\Xcal$,
$
 \phi:(0,T) \ni t \mapsto  \phi(t) \equiv \phi(\cdot,t) \in \Xcal
$.
For $1 \leq p \leq \infty$, $L^p( 0,T; \Xcal )$
is the space of $\Xcal$-valued functions whose norm in $\Xcal$ is in $L^p(0,T)$. This is a Banach space for the norm
\[
  \| \phi \|_{L^p( 0,T;\Xcal)} = \left( \int_0^T \| \phi(t) \|^p_\Xcal \right)^{\hspace{-0.1cm}\tfrac{1}{p}} 
  , \quad 1 \leq p < \infty, \quad
  \| \phi \|_{L^\infty( 0,T;\Xcal)} = \esssup_{t \in (0,T)} \| \phi(t) \|_\Xcal.
\]

In \eqref{fractional_heat}, $\partial_t^\gamma$ denotes the left-sided Caputo fractional derivative \eqref{caputo}. There are three, not equivalent, definitions of fractional derivatives: Riemann-Liouville, Caputo and Gr\"unwald-Letnikov. For their definitions and properties see \cite{fractional_book,Samko}.

\subsection{Fractional integrals}
\label{sub:fractional_integral}

Given a function $g \in L^1(0,T)$, the left Riemann-Liouville fractional integral $I^{\sigma} g$ of order $\sigma>0$ is defined by \cite{fractional_book,Samko}:
\begin{equation}
\label{fractional_integral}
(I^{\sigma} g)(t) = \frac{1}{\Gamma(\sigma)} \int_{0}^t \frac{g(r)}{(t-r)^{1-\sigma}} \diff r;
\end{equation}
note that $\partial_t^\gamma g(t) = (I^{1-\gamma} \partial_t g)(t)$ for all $g \in W_1^1(0,T)$. Young's inequality for convolutions immediately yields the following result.
\begin{lemma}[continuity]
\label{le:continuity}
If $g \in L^2(0,T)$ and $\phi \in L^1(0,T)$, then the operator
\[
 g \mapsto \Phi, \qquad \Phi(t) = \phi \star g (t) = \int_0^t \phi(t-r) g(r) \diff r
\]
is continuous from $L^2(0,T)$ into itself and 
$
 \| \Phi \|_{L^2(0,T)} \leq \| \phi \|_{L^1(0,T)} \| g \|_{L^2(0,T)}.
$
\end{lemma}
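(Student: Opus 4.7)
The plan is to reduce the claim to the classical Young convolution inequality on $\R$. First I would observe that the formula $\Phi(t) = \int_0^t \phi(t-r) g(r)\diff r$ for $t \in (0,T)$ coincides with the full convolution $(\tilde\phi \ast \tilde g)(t)$, where $\tilde\phi, \tilde g$ denote the extensions of $\phi$ and $g$ by zero to all of $\R$: indeed $\tilde g(r)=0$ for $r<0$ and $\tilde\phi(t-r)=0$ for $r>t$, so the integrand vanishes outside $(0,t)$. After this identification the $L^1$ and $L^2$ norms of the extensions coincide with the original norms on $(0,T)$, so the estimate and the membership $\Phi \in L^2(0,T)$ will follow from $\|\tilde\phi \ast \tilde g\|_{L^2(\R)} \le \|\tilde\phi\|_{L^1(\R)} \|\tilde g\|_{L^2(\R)}$.

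If one prefers a self-contained derivation that bypasses extensions, the second route I would take is the standard weighted Cauchy--Schwarz plus Fubini argument. Writing $|\phi(t-r)| = |\phi(t-r)|^{1/2}\cdot|\phi(t-r)|^{1/2}$ inside the integral defining $\Phi(t)$ and applying Cauchy--Schwarz yields
\begin{equation*}
|\Phi(t)|^2 \;\le\; \Bigl(\int_0^t |\phi(t-r)|\diff r\Bigr)\Bigl(\int_0^t |\phi(t-r)|\,|g(r)|^2 \diff r\Bigr) \;\le\; \|\phi\|_{L^1(0,T)}\int_0^t |\phi(t-r)|\,|g(r)|^2 \diff r.
\end{equation*}
Integrating in $t$ over $(0,T)$ and swapping the order of integration via Fubini (applicable because the integrand is nonnegative) bounds the inner $t$-integral of $|\phi(t-r)|$ again by $\|\phi\|_{L^1(0,T)}$, producing the squared estimate $\|\Phi\|_{L^2(0,T)}^2 \le \|\phi\|_{L^1(0,T)}^2\|g\|_{L^2(0,T)}^2$.

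There is no real obstacle here: the only point requiring a little care is the measurability and finiteness justifying Fubini, which follows from $\phi \in L^1$ and $g \in L^2 \subset L^1_{\mathrm{loc}}$ on the bounded interval $(0,T)$. Continuity of the linear map $g \mapsto \Phi$ is then immediate from the norm bound. Taking the square root gives precisely the stated inequality, and the lemma follows.
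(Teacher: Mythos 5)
Your proposal is correct and follows essentially the same route as the paper, which simply invokes Young's inequality for convolutions; your zero-extension reduction is exactly the standard way to apply it, and your second, self-contained Cauchy--Schwarz/Fubini argument is just a proof of that inequality in the case $L^1 \star L^2 \to L^2$. Nothing further is needed.
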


\begin{corollary}[continuity of $I^{\sigma}$]
\label{co:continuity}
For any $\sigma > 0$, the left Riemann-Liouville fractional integral $I^{\sigma} g$ is continuous from
$L^2(0,T)$ into itself and
\[\|I^{\sigma} g\|_{L^2(0,T)}\le \frac{T^\sigma}{\Gamma(\sigma+1)}
\|g\|_{L^2(0,T)} \quad\forall g\in L^2(0,T).
\]
\end{corollary}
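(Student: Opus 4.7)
The plan is to apply Lemma \ref{le:continuity} directly with the convolution kernel $\phi(t) = t^{\sigma-1}/\Gamma(\sigma)$, since by the very definition \eqref{fractional_integral} we have $I^{\sigma} g(t) = (\phi \star g)(t)$. So the whole proof reduces to verifying that $\phi \in L^1(0,T)$ and computing its norm explicitly.

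First I would observe that $\phi$ is integrable on $(0,T)$ for every $\sigma > 0$: the only potential issue is the endpoint $t = 0$, but $t^{\sigma-1}$ is integrable there precisely because $\sigma - 1 > -1$. Then a direct computation gives
\[
 \|\phi\|_{L^1(0,T)} = \frac{1}{\Gamma(\sigma)} \int_0^T t^{\sigma-1}\,\diff t = \frac{T^{\sigma}}{\sigma\,\Gamma(\sigma)} = \frac{T^{\sigma}}{\Gamma(\sigma+1)},
\]
using the functional equation $\sigma\,\Gamma(\sigma) = \Gamma(\sigma+1)$.

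Finally, invoking Lemma \ref{le:continuity} with this $\phi$ yields $I^{\sigma} \colon L^2(0,T) \to L^2(0,T)$ continuously, together with
\[
 \|I^{\sigma} g\|_{L^2(0,T)} \le \|\phi\|_{L^1(0,T)}\,\|g\|_{L^2(0,T)} = \frac{T^{\sigma}}{\Gamma(\sigma+1)}\,\|g\|_{L^2(0,T)},
\]
which is exactly the claimed bound. There is no real obstacle here; the only subtlety is noting that the singular kernel $t^{\sigma-1}$ is integrable at the origin for every $\sigma > 0$, and the rest is just the functional equation for $\Gamma$.
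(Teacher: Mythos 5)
Your proof is correct and is exactly the argument the paper intends: the corollary is stated as an immediate consequence of Lemma~\ref{le:continuity} applied to the kernel $\phi(t)=t^{\sigma-1}/\Gamma(\sigma)$, whose $L^1(0,T)$-norm is $T^\sigma/\Gamma(\sigma+1)$. Nothing is missing.
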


\subsection{Fractional powers of general second order elliptic operators}
\label{sub:fractional_L}

The operator $\calL^{-1}: L^2(\Omega)\to L^2(\Omega)$, which solves $\mathcal{L} w  = f$ in $\Omega$ and $w = 0$ on $\partial \Omega$ is compact, symmetric and positive, so its spectrum $\{\lambda_k^{-1} \}_{k\in \mathbb N}$ is discrete, real, positive and accumulates at zero. Moreover, the eigenfunctions $\{\varphi_k\}_{k \in \mathbb{N}}$
\begin{equation}
   \label{eigenvalue_problem_L}
    \mathcal{L} \varphi_k = \lambda_k \varphi_k  \text{ in } \Omega,
    \qquad
    \varphi_k = 0 \text{ on } \partial\Omega, \qquad k \in \mathbb{N}
\end{equation}
form an orthonormal basis of $L^2(\Omega)$. Fractional powers of $\mathcal L$ can be defined by
\begin{equation}
  \label{def:second_frac}
  \mathcal{L}^s w  := \sum_{k=1}^\infty \lambda_k^{s} w_k \varphi_k, \qquad w \in C_0^{\infty}(\Omega), \qquad s \in (0,1),
\end{equation} 
where $w_k = \int_{\Omega} w \varphi_k $. By density we extend this definition to
\begin{equation}
\label{def:Hs}
  \Hs = \left\{ w = \sum_{k=1}^\infty w_k \varphi_k: 
  \sum_{k=1}^{\infty} \lambda_k^s w_k^2 < \infty \right\} = [H^1_0(\Omega),L^2(\Omega)]_{1-s}; 
\end{equation}
see \cite{NOS} for details. For $ s \in (0,1)$ we denote by $\Hsd$ the dual space of $\Hs$.

\subsection{The Caffarelli-Silvestre extension problem}
\label{sub:CaffarelliSilvestre}

The Caffarelli-Silvestre result \cite{CT:10,CS:07,CDDS:11,ST:10}, requires to deal with a nonuniformly elliptic equation. Let $D \subset \R^{n+1}$ be open and define $L^2(|y|^{\alpha},D)$ as the Lebesgue space for the measure $|y|^\alpha \diff x$.

Define also $ H^1(|y|^{\alpha},D) := \{ w \in L^2(|y|^{\alpha},D): | \nabla w | \in L^2(|y|^{\alpha},D) \}$, with norm
\begin{equation}
\label{wH1norm}
\| w \|_{H^1(|y|^{\alpha},D)} =
\left(  \| w \|^2_{L^2(|y|^{\alpha},D)} + \| \nabla w \|^2_{L^2(|y|^{\alpha},D)} \right)^{\frac{1}{2}}.
\end{equation}
Since $\alpha \in (-1,1)$, $|y|^\alpha$ belongs to the Muckenhoupt class $A_2(\R^{n+1})$; see \cite{GU,Turesson}. This implies that $H^1(|y|^{\alpha},D)$ is Hilbert and $C^{\infty}(\D) \cap H^1(|y|^{\alpha},D)$ is dense in $H^1(|y|^{\alpha},D)$ (cf.~\cite[Proposition 2.1.2, Corollary 2.1.6]{Turesson} and \cite[Theorem~1]{GU}).

To study problem \eqref{heat_alpha_extension} we define the weighted Sobolev space
\begin{equation}
  \label{HL10}
  \HL(y^{\alpha},\C) := \left\{ w \in H^1(y^\alpha,\C): w = 0 \textrm{ on } \partial_L \C\right\}.
\end{equation}
As \cite[(2.21)]{NOS} shows, the following \emph{weighted Poincar\'e inequality} holds:
\begin{equation}
\label{Poincare_ineq}
\| w \|_{L^2(y^{\alpha},\C)} \lesssim \| \nabla v \|_{L^2(y^{\alpha},\C)},
\quad \forall w \in \HL(y^{\alpha},\C).
\end{equation}
Then, the seminorm on $\HL(y^{\alpha},\C)$ is equivalent to the norm \eqref{wH1norm}. For $w \in H^1(y^{\alpha},\C)$ $\tr w$ denotes its trace onto $\Omega \times \{ 0 \}$. We recall (\cite[Prop.~2.5]{NOS} and \cite[Prop.~2.1]{CDDS:11})
\begin{equation}
\label{Trace_estimate}
\tr \HL(y^{\alpha},\C) = \Hs,
\qquad
  \|\tr w\|_{\Hs} \leq C_{\tr} \| w \|_{\HLn(y^{\alpha},\C)}.
\end{equation}

The Caffarelli-Silvestre extension result \cite{CS:07,ST:10} then reads: If $u \in \Hs$ solves $\calL^s u = f$ in $\Omega$ and $\ue \in \HL(y^\alpha,\C)$ solves \eqref{alpha_harm_L}, then $\tr \ue = u$.

To write the appropriate Caffarelli-Silvestre extension for problem \eqref{heat_alpha_extension}, we define:
\begin{equation}\label{spaces}
\begin{aligned}
  \mathbb{W} &:= \{ w \in L^{\infty}(0,T;L^2(\Omega)) \cap L^{2}(0,T;\Hs): \partial_t^{\gamma} 
    w \in L^2(0,T;\Hsd)\}, \\
  \mathbb{V} &:= \{ w \in L^{2}(0,T;\HL(y^{\alpha},\C)): \partial_t^{\gamma} \tr w  \in L^2(0,T;\Hsd)\}.  
\end{aligned}
\end{equation}
Thus, given $f \in L^2(0,T;\Hsd)$, a function $\usf \in \mathbb{W}$ solves \eqref{fractional_heat} if and only if the
harmonic extension $\ve \in \mathbb{V}$ solves \eqref{heat_alpha_extension}. A weak formulation of \eqref{heat_alpha_extension} reads: Find $\ve \in \mathbb{V}$ such that $\tr \ve(0) = \usf_0$ and, for a.e.~$t \in (0,T)$, 
\begin{equation}
\label{heat_harmonic_extension_weak}
\langle \tr  \partial_t^{\gamma} \ve, \tr \phi \rangle+ a(\ve,\phi) = \langle f, \tr \phi \rangle \quad \forall \phi \in \HL(y^{\alpha},\C),
\end{equation}
where $\langle \cdot, \cdot \rangle$ is the duality pairing between $\Hs$ and $\Hsd$ and
\begin{equation}
\label{a}
a(w,\phi) :=   \frac{1}{d_s}\int_{\C} {y^{\alpha}\mathbf{A}(x)} \nabla w \cdot \nabla \phi + y^{\alpha} c(x') w \phi. 
\end{equation}

\begin{remark}[equivalent seminorm]
\label{rem:equivalent} \rm
The regularity of $A$ and $c$ and \eqref{Poincare_ineq} imply that $a$, defined in \eqref{a}, is bounded and coercive in $\HL(y^\alpha,\C)$. In what follows we shall use repeatedly that $a(w,w)^{1/2}$ is an equivalent norm to $| \cdot |_{H^1(y^\alpha,\C)}$ in $\HL(y^\alpha,\C)$.
\end{remark}

\begin{remark}[dynamic boundary condition]
\label{rem:dynamical} \rm
Problem \eqref{heat_harmonic_extension_weak} is an elliptic problem with a dynamic boundary condition: $\partial_{\nu}^{\alpha} \ve = f - \tr  \partial_t^{\gamma} \ve$ on $\Omega \times \{0\}$. Consequently, its analysis is slightly different from the standard theory for parabolic equations.
\end{remark}

\begin{remark}[initial datum]
\label{rem:initial_data} \rm
The initial datum $\usf_0$ of problem \eqref{fractional_heat} determines only $\ve(0)$ on $\Omega \times \{ 0\}$ in a trace sense. However, in the subsequent analysis it is necessary to consider its extension to the whole 
cylinder $\C$. Thus, we define $\ve(0)$ to be the solution of problem \eqref{alpha_harm_L} with the Neumann condition replaced by the Dirichlet condition $\tr \ve = \usf_0$. References \cite{CT:10,CDDS:11} provide the estimate 
$\| \ve(0) \|_{\HLn(y^{\alpha},\C)} \lesssim \| \usf_0 \|_{\Hs}$.
\end{remark}

\subsection{Solution representation}
\label{sub:solution_representation}
Using the eigenpairs $\{ \lambda_k, \varphi_k\}$ we deduce that if $\usf(x',t)=\sum_k \usf_k(t) \varphi_k(x')$ solves \eqref{fractional_heat} then $\ve$, solution of \eqref{heat_alpha_extension}, can be written as
\begin{equation}
\label{exactforms}
  \ve(x,t) = \sum_{k=1}^\infty \usf_k(t) \varphi_k(x') \psi_k(y),
\end{equation}
where $\psi_k$ solves
\begin{equation}
\label{psik}
\psi_k'' + \alpha y^{-1} \psi_k' - \lambda_k \psi_k = 0, \quad
\psi_k(0) = 1, \quad \psi_k(y) \to 0, \ y \to \infty.
\end{equation}
If $s=\sr$, then $\psi_k(y) = e^{-\sqrt{\lambda_k}y}$. For $s \in (0,1)\setminus\{\srn\}$ we have that if $c_s = \tfrac{2^{1-s}}{\Gamma(s)}$, then 
$
\psi_k(y) = c_s \left(\sqrt{\lambda_k}y\right)^s K_s(\sqrt{\lambda_k} y),
$
where $K_s$ denotes the modified Bessel function of the second kind; see \cite{CDDS:11,NOS}. For $s \in (0,1)$, we have \cite{NOS}
\begin{equation}
 \label{asym-psi_k-p}
 \lim_{y \downarrow 0^{+} } \frac{y^{\alpha}\psi_{k}'(y)}{d_s \lambda_k^s } = - 1,
\end{equation}
and, for  $a, b \in \R^+$, $a<b$
\begin{equation}
 \label{besselenergy}
 \int_{a}^{b} y^{\alpha} \left( \lambda_k \psi_k(y)^2 + \psi_k'(y)^2 \right) \diff y
 = \left. y^{\alpha} \psi_k(y)\psi_k'(y) \right|_{a}^{b}.
\end{equation}
The boundary condition of \eqref{heat_alpha_extension}, in conjunction with \eqref{def:lf}, and \eqref{exactforms}--\eqref{asym-psi_k-p} imply
\begin{equation}
\label{eq:aux_key}
    d_s f = -\lim_{y \downarrow 0 } y^{\alpha} \ve_y + d_s \tr \partial_t^{\gamma}\ve
    = d_s \sum_{k=1}^{\infty} \varphi_{k} \left( \lambda_k^s \usf_k + \partial_t^{\gamma} \usf_k \right),
\end{equation}
which, in turn, since $\usf|_{t=0} = \usf_0$, yields the fractional initial value problem for $\usf_k$
\begin{equation} 
 \label{uk}
    \partial^{\gamma}_t \usf_k(t) + \lambda_k^s \usf_k(t) = f_k(t), \ t>0, \qquad
     \usf_k(0) = \usf_{0,k},
\end{equation}
with $\usf_{0,k} = (\usf_0,\varphi_k)_{L^2(\Omega)}$, and $f_{k} = \langle f,\varphi_k\rangle$. The theory of fractional ordinary differential equations \cite{fractional_book,Samko} gives a unique function $\usf_k$ satisfying problem \eqref{uk}. In addition, using \eqref{exactforms} and \eqref{psik}, we obtain
\[
  \ve(x',0,t) = \sum_{k=1}^{\infty} \usf_k(t) \varphi_k(x') \psi_k(0) =
  \sum_{k=1}^{\infty} \usf_k(t) \varphi_k(x') = \usf(x',t).
\]
Finally, Remark~\ref{rem:equivalent} together with formulas \eqref{asym-psi_k-p} and \eqref{besselenergy} imply
\begin{equation}
\| \nabla \ve(t) \|^2_{L^2(y^{\alpha},\C)} \lesssim \sum_{k=1}^{\infty} \usf_k(t)^2 \int_{0}^{\infty} 
y^{\alpha} \left( \lambda_k \psi_k(y)^2 + \psi_k'(y)^2 \right)
= d_s \| \usf(t)\|^2_{\Hs}
\label{norm=} 
\end{equation}
for a.e.~$t \in (0,T)$. We now turn our attention to the solution of problem \eqref{uk}.

\subsubsection{Case $\gamma = 1$: The exponential function}
\label{subsubsec:exponential_function}
If $\gamma = 1$, then \eqref{uk} reduces to a first-order initial value problem. We define $E(t) w = \sum_{k=1}^{\infty} e^{-\lambda_k^s t}(w, \varphi_k)_{L^2(\Omega)} \varphi_k$, which is the solution operator of \eqref{fractional_heat} with $f \equiv 0$. By Duhamel's principle, the solution of problem \eqref{fractional_heat} is
$\usf(x',t) = E(t) \usf_0 + \int_{0}^t E(t-r) f(x',r) \diff r$.

\subsubsection{Case $\gamma \in (0,1)$: The Mittag-Leffler function}
\label{subsubsec:ML_function}
For $\gamma > 0$ and $\mu \in \R$, we define the Mittag Leffler function $E_{\gamma,\mu}(z)$ as
\begin{equation}
\label{ML}
 E_{\gamma,\mu}(z):= \sum_{k=0}^{\infty} \frac{z^k}{\Gamma( \gamma k + \mu )}, \quad z \in \mathbb{C};
\end{equation}
see \cite{fractional_book,Samko}. For $\lambda,\gamma,t \in \R^+$, we have \cite[Lemma 2.23]{fractional_book}
\begin{equation}
\label{dEbeta1}
\partial_t^{\gamma} E_{\gamma,1}(-\lambda t^{\gamma}) = -\lambda E_{\gamma,1}(-\lambda t^{\gamma}).
\end{equation}
If $\gamma \in (0,2)$, $\mu \in \R$, $\pi \gamma/2 < \delta < \min\{ \pi, \pi \gamma \}$ and $\delta \leq |\arg(z) | \leq \pi$, then \cite[\S 1.8]{fractional_book}
\begin{equation}
\label{ML_estimate}
  (1 + |z|)^{-1} |E_{\gamma,\mu}(z)| \lesssim 1.
\end{equation}
Following \cite{Sakayama} we construct the solution to \eqref{fractional_heat}. The solution operator for $f  \equiv 0$ is
\begin{equation}
\label{G}
 G_{\gamma}(t) w = \sum_{k=1}^{\infty} E_{\gamma,1}(-\lambda^s_k t^{\gamma}) w_k \varphi_k.
\end{equation}
which follows from \eqref{dEbeta1}; see also \cite[(2.3)]{BLZ:13} and \cite[(2.6)]{M:10} for the particular case $s=1$. If $f \neq 0$ and $\usf_0 \equiv 0$, we also define the operator
\begin{equation}
\label{barG}
 F_{\gamma}(t) w = \sum_{k=1}^{\infty} t^{\gamma-1}E_{\gamma,\gamma}(-\lambda^s_k t^{\gamma}) w_k \varphi_k.
\end{equation}
Using these operators, we have (\cite[(2.4)]{BLZ:13} and \cite[Theorem 2.2]{Sakayama} for $s=1$)
\begin{equation}
\label{u_exact}
\usf(x',t) = G_{\gamma}(t) \usf_0 + \int_0^t F_{\gamma}(t-r)f(x',r)\diff r; 
\end{equation}
These considerations yield existence and uniqueness for solutions of \eqref{fractional_heat} and \eqref{heat_alpha_extension}. We refer to~\S \ref{sec:time_discretization} for energy estimates 
(see also \cite{Sakayama}).

\begin{theorem}[existence and uniqueness]
\label{thm:exis_uniq}
Given $s \in (0,1)$, $\gamma \in (0,1]$, $f \in L^2(0,T;\Hsd)$ and $\usf_0 \in L^2(\Omega)$, problems
\eqref{fractional_heat} and \eqref{heat_alpha_extension} have a unique solution.
\end{theorem}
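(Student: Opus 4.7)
The plan is to prove existence by constructing the solution explicitly via the spectral decomposition already developed in \S\ref{sub:solution_representation}, and then verify that the candidate has the right integrability. Uniqueness will follow from linearity and the scalar theory for \eqref{uk}. The equivalence between \eqref{fractional_heat} and \eqref{heat_alpha_extension} is already encoded in the Caffarelli-Silvestre extension, so it suffices to produce a unique $\usf \in \mathbb{W}$ solving \eqref{fractional_heat} and then define $\ve$ by \eqref{exactforms}.

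For existence, I would take as candidate the Duhamel formula $\usf = E(t)\usf_0 + E \star f$ when $\gamma = 1$, and \eqref{u_exact} when $\gamma \in (0,1)$, understood coefficient-wise through the spectral basis $\{\varphi_k\}$. Each Fourier mode $\usf_k$ is the unique solution of the scalar fractional initial value problem \eqref{uk}, which is standard. To verify $\usf \in \mathbb{W}$, I would use Parseval's identity together with the Mittag-Leffler bound \eqref{ML_estimate}, which at $z=-\lambda_k^s t^{\gamma} \le 0$ gives $|E_{\gamma,1}(-\lambda_k^s t^{\gamma})| \lesssim (1+\lambda_k^s t^{\gamma})^{-1}$ and a similar bound for $E_{\gamma,\gamma}$. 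These yield $\|G_{\gamma}(t)\usf_0\|_{L^2(\Omega)} \lesssim \|\usf_0\|_{L^2(\Omega)}$ and, after integrating in time and using $\sum_k \lambda_k^s \int_0^T (1+\lambda_k^s t^{\gamma})^{-2}\,\diff t \lesssim 1$ uniformly in $k$, the bound $\|G_{\gamma}(t)\usf_0\|_{L^2(0,T;\Hs)} \lesssim \|\usf_0\|_{L^2(\Omega)}$. The forcing contribution $F_{\gamma}\star f$ carries a $t^{\gamma-1}$ singularity; here I would recognize it as a convolution and invoke Lemma~\ref{le:continuity} and Corollary~\ref{co:continuity} to control it by $\|f\|_{L^2(0,T;\Hsd)}$ in both the $L^{\infty}(0,T;L^2(\Omega))$ and $L^2(0,T;\Hs)$ norms. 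Finally, the bound $\partial_t^{\gamma}\usf \in L^2(0,T;\Hsd)$ follows directly from the equation $\partial_t^{\gamma}\usf = f - \calL^s\usf$ and the two bounds just obtained.

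For uniqueness, by linearity it suffices to consider $f\equiv 0$, $\usf_0\equiv 0$. Testing the spectral equations \eqref{uk} shows each coefficient $\usf_k \equiv 0$, since the scalar fractional ODE has a unique solution \cite{fractional_book,Samko}; hence $\usf\equiv 0$. Passing from $\usf$ to $\ve$: once $\usf \in \mathbb{W}$ is constructed, define $\ve$ by \eqref{exactforms}. The identity \eqref{norm=} yields $\ve \in L^2(0,T;\HL(y^{\alpha},\C))$ with $\|\nabla \ve(t)\|_{L^2(y^{\alpha},\C)} \lesssim \|\usf(t)\|_{\Hs}$, while $\tr \ve = \usf$ gives $\partial_t^{\gamma}\tr\ve = \partial_t^{\gamma}\usf \in L^2(0,T;\Hsd)$, so $\ve \in \mathbb{V}$. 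The derivation around \eqref{eq:aux_key} shows $\ve$ satisfies \eqref{heat_harmonic_extension_weak}, with the initial data extended as in Remark~\ref{rem:initial_data}. Uniqueness for \eqref{heat_alpha_extension} is inherited from uniqueness for \eqref{fractional_heat}, because any solution $\ve \in \mathbb{V}$ induces $\usf = \tr \ve \in \mathbb{W}$ solving \eqref{fractional_heat}, and the harmonic extension of $\usf$ is unique.

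The main obstacle is the handling of the convolution with $F_{\gamma}$, whose kernel is integrable but not square-integrable in time and whose spectral symbols decay only like $(1+\lambda_k^s t^{\gamma})^{-1}$. The argument must be delicate enough that the estimates aggregate correctly across modes, trading the $t^{\gamma-1}$ singularity against the algebraic decay of the Mittag-Leffler symbol; this is exactly what Lemma~\ref{le:continuity} and Corollary~\ref{co:continuity} are built to do, so the strategy is to isolate the mode-wise bound first and then sum using Parseval.
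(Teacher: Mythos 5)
Your proposal takes essentially the same route as the paper, whose proof simply invokes the spectral-decomposition argument of \cite{Sakayama} based on the representation \eqref{u_exact} and leaves the details to the reader; your mode-wise construction via \eqref{uk}, the Mittag-Leffler bounds \eqref{ML_estimate}, and the lifting to the extension through \eqref{exactforms} and \eqref{norm=} are precisely that argument spelled out. One caution to note: Lemma~\ref{le:continuity} (Young's inequality, $L^1\star L^2\to L^2$) yields the $L^2$-in-time bounds but not the $L^{\infty}(0,T;L^2(\Omega))$ control of the forcing contribution directly, so that piece requires a separate (mode-wise pointwise) argument as in \cite{Sakayama} --- the same level of detail the paper itself defers.
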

\begin{proof}
Existence and uniqueness of problem \eqref{fractional_heat} can be obtained modifying the spectral decomposition 
approach studied in \cite{Sakayama} based on the solution representation \eqref{u_exact}; see 
\cite[Theorems 2.1 and 2.2]{Sakayama}.
Similar arguments apply to conclude the well-posedness of problem \eqref{heat_alpha_extension}. For brevity, we 
leave the details to the reader.
\end{proof}

\subsection{Regularity}
\label{sub:Regularity}
Let us now discuss the space and time regularity of $\ve$. In what follows we tacitly assume that $\Omega$ is such that
\begin{equation}
\label{eq:H2reg}
  \| w \|_{H^2(\Omega)} \lesssim \| \mathcal{L} w \|_{L^2(\Omega)}, \quad \forall w \in H^2(\Omega) \cap H^1_0(\Omega).
\end{equation}

\subsubsection{Space regularity}
\label{sub:space_regularity}


The regularity in space of $\ve$ is described below.

\begin{theorem}[space regularity]
\label{TH:regularity}
Let $\ve \in \HL(y^{\alpha},\C)$ solve \eqref{heat_alpha_extension}. For $s \in (0,1) \setminus \{\sr\}$ and $\gamma = 1$, we have
\begin{align}
\label{reginx}
\| \nabla \nabla_{x'} \ve \|^2_{L^2(0,T;L^2(y^{\alpha},\C))} &\lesssim T \| \usf_0\|^2_{\mathbb{H}^{1+s}(\Omega)}+
\| f \|_{L^2(0,T;\Ws)}^2,\\
\label{reginy}
\| \ve_{yy} \|_{L^2(0,T;L^2(y^{\beta},\C))}^2 & \lesssim T \| \usf_0\|^2_{\mathbb{H}^{2s}(\Omega)}+ 
\| f \|_{L^2(0,T;L^2(\Omega))}^2,
\end{align}
with $\beta>2\alpha+1$. Let $0 < \mu \ll 1$ be arbitrary. For $s \in (0,1) \setminus \{\sr\}$ and $\gamma \in (0,1)$, we have
\begin{align}
\label{reginxgamma}
\| \nabla \nabla_{x'} \ve \|^2_{L^2(0,T;L^2(y^{\alpha},\C))} &\lesssim T \| \usf_0\|^2_{\mathbb{H}^{1+s}(\Omega)}+
T^{2\gamma\mu}\| f \|_{L^2(0,T;\mathbb{H}^{1-(1-2\mu)s}(\Omega))}^2,\\
\label{reginygamma}
\| \ve_{yy} \|_{L^2(0,T;L^2(y^{\beta},\C))}^2 & \lesssim T \| \usf_0\|^2_{\mathbb{H}^{2s}(\Omega)}+
T^{2\gamma\mu}\| f \|_{L^2(0,T;\mathbb{H}^{2\mu s}(\Omega))}^2.
\end{align}
For $s = \sr$, we have  
\begin{align}\label{H2-gamma=1}
\| \ve \|^2_{L^2(0,T;H^2(\C))} &\lesssim T \| \usf_0\|^2_{\mathbb{H}^{3/2}(\Omega)} + 
\| f \|_{L^2(0,T;\mathbb{H}^{1/2}(\Omega))}^2, \quad \gamma =1,
\\
\label{H2-gamma<1}
\| \ve \|^2_{L^2(0,T;H^2(\C))} &\lesssim T \| \usf_0\|^2_{\mathbb{H}^{3/2}(\Omega)}+ T^{2\gamma\mu}\| f \|_{L^2(0,T;\mathbb{H}^{1/2-\mu}(\Omega))}^2, \quad \gamma \in (0,1).
\end{align}
\end{theorem}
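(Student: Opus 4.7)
The plan is to exploit the separation-of-variables representation \eqref{exactforms}, $\ve(x,t)=\sum_{k=1}^\infty \usf_k(t)\varphi_k(x')\psi_k(y)$, and split each estimate into three ingredients: (i) one-dimensional weighted integrals of $\psi_k,\psi_k',\psi_k''$ in $y$; (ii) spectral bounds on $x'$-derivatives of $\ve(\cdot,y,t)$ supplied by $\mathcal{L}\varphi_k=\lambda_k\varphi_k$ together with the $H^2$-regularity hypothesis \eqref{eq:H2reg}; and (iii) time-integrated bounds on $\usf_k(t)^2$ obtained from the explicit representations of \S\ref{sub:solution_representation}.

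First I would collect the one-dimensional $y$-bounds
\[
\int_0^\infty y^\alpha \psi_k(y)^2\,dy\lesssim \lambda_k^{s-1},
\qquad
\int_0^\infty y^\alpha \psi_k'(y)^2\,dy\lesssim \lambda_k^s,
\]
which fall out of the energy identity \eqref{besselenergy} on $(0,\infty)$ combined with $\psi_k(0)=1$ and \eqref{asym-psi_k-p}. For the $\psi_k''$ integral with the heavier weight $y^\beta$, $\beta>2\alpha+1$, I would invoke the ODE \eqref{psik} to rewrite $\psi_k''=\lambda_k\psi_k-\alpha y^{-1}\psi_k'$ and then use the explicit representation $\psi_k(y)=c_s(\sqrt{\lambda_k}y)^s K_s(\sqrt{\lambda_k}y)$ together with the rescaling $z=\sqrt{\lambda_k}y$ to arrive at $\int_0^\infty y^\beta (\psi_k'')^2\,dy\lesssim \lambda_k^{(3-\beta)/2}$. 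The sharp condition $\beta>2\alpha+1=3-4s$ is precisely what keeps the $z$-integral finite at the origin via the Bessel asymptotics $K_{s-1}(z)\sim c\,z^{-(1-s)}$ as $z\downarrow 0$; for $s=\tfrac12$ the weight is trivial ($\alpha=0$) and all one-dimensional integrals are computed directly from $\psi_k(y)=e^{-\sqrt{\lambda_k}y}$.

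Combining these with the identity $\|\mathcal{L}\ve(\cdot,y,t)\|_{L^2(\Omega)}^2=\sum_k \lambda_k^2 \usf_k(t)^2 \psi_k(y)^2$, \eqref{eq:H2reg}, and the analogous orthogonality identities for $\nabla_{x'}\ve$ and $\partial_y\nabla_{x'}\ve$, produces the pointwise-in-$t$ estimates
\[
\|\nabla\nabla_{x'}\ve(t)\|_{L^2(y^\alpha,\C)}^2\lesssim \|\usf(t)\|_{\mathbb{H}^{1+s}(\Omega)}^2,
\quad
\|\ve_{yy}(t)\|_{L^2(y^\beta,\C)}^2\lesssim \|\usf(t)\|_{\mathbb{H}^{2s}(\Omega)}^2,
\]
and, for $s=\tfrac12$, $\|\ve(t)\|_{H^2(\C)}^2\lesssim \|\usf(t)\|_{\mathbb{H}^{3/2}(\Omega)}^2$.

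It remains to integrate these pointwise estimates in $t$ using the evolution of $\usf_k$. For $\gamma=1$, $\usf_k(t)=e^{-\lambda_k^s t}\usf_{0,k}+\int_0^t e^{-\lambda_k^s(t-r)}f_k(r)\,dr$ and Young's convolution inequality (Lemma \ref{le:continuity}) yield $\int_0^T \usf_k(t)^2\,dt\lesssim T\,\usf_{0,k}^2+\lambda_k^{-2s}\|f_k\|_{L^2(0,T)}^2$, and summation in $k$ against the spectral weights $\lambda_k^{1+s}$, $\lambda_k^{2s}$ and $\lambda_k^{3/2}$ delivers \eqref{reginx}, \eqref{reginy} and \eqref{H2-gamma=1} respectively. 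For $\gamma\in(0,1)$ I would exploit the Mittag-Leffler representation \eqref{u_exact} combined with \eqref{ML_estimate} and the elementary inequality $(1+x)^{-1}\le x^{\mu-1}$ valid for every $\mu\in(0,1)$ and $x>0$, obtaining the kernel bound $|t^{\gamma-1}E_{\gamma,\gamma}(-\lambda_k^s t^\gamma)|\lesssim \lambda_k^{-s(1-\mu)}t^{\gamma\mu-1}$ whose $L^1(0,T)$-norm is $\lesssim T^{\gamma\mu}\lambda_k^{-s(1-\mu)}$; Young's inequality then gives $\int_0^T \usf_k(t)^2\,dt\lesssim T\,\usf_{0,k}^2+T^{2\gamma\mu}\lambda_k^{-2s(1-\mu)}\|f_k\|_{L^2(0,T)}^2$, and summation in $k$ with the same spectral weights recovers \eqref{reginxgamma}, \eqref{reginygamma} and \eqref{H2-gamma<1}. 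The main technical hurdle will be the weighted $\psi_k''$ computation: identifying the sharp $\lambda_k$-exponent $(3-\beta)/2$ and the sharp constraint $\beta>2\alpha+1$ relies on delicate Bessel asymptotics at the origin, and the parameter $\mu$ in the Mittag-Leffler estimate must be tuned so that the convolution kernel stays $L^1$-integrable while producing the norm exponents on $f$ displayed in the statement.
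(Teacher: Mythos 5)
Your proposal is essentially the paper's own proof: the same spectral representation \eqref{exactforms}, the same weighted $y$-integral bounds for $\psi_k,\psi_k',\psi_k''$ (which the paper simply quotes from \cite[Theorem 2.7]{NOS} rather than rederiving from \eqref{besselenergy}, \eqref{asym-psi_k-p} and Bessel asymptotics), the same reduction to $\sum_k\lambda_k^{1+s}|\usf_k(t)|^2$, $\sum_k\lambda_k^{2s}|\usf_k(t)|^2$, $\sum_k\lambda_k^{3/2}|\usf_k(t)|^2$, and the same time-integrated bounds on $\usf_k$ via Lemma~\ref{le:continuity}; your use of $(1+x)^{-1}\le x^{\mu-1}$ at the kernel level is equivalent to the paper's bound \eqref{log-term} followed by $\log(1+z)\lesssim z^{\mu}$. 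One small caveat: for $s=\tfrac12$, $\gamma\in(0,1)$ your computation (like the paper's own argument) produces the norm $\| f \|_{L^2(0,T;\mathbb{H}^{1/2+\mu}(\Omega))}$ rather than the $\mathbb{H}^{1/2-\mu}(\Omega)$ norm printed in \eqref{H2-gamma<1}, which appears to be a sign typo in the statement rather than a gap in your argument.
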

\begin{proof}
We proceed in several steps using the representation formula \eqref{exactforms}.

\noindent \boxed{1}
Case $s \in (0,1) \setminus \{\frac{1}{2}\}$. Since $\{ \varphi_k\}_{k\in \mathbb N}$  satisfies \eqref{eigenvalue_problem_L} and $\int_{0}^{\infty} y^{\beta} |\psi_k''(y)|^2 \lesssim \lambda_k^{3/2-\beta/2} \leq
\lambda_k^{2s}$  \cite[Theorem 2.7]{NOS}, we obtain
\begin{equation}
\label{V_yy}
\| \ve_{yy}(\cdot,t) \|^2_{L^2(y^\beta,\C)} = \sum_{k=1}^{\infty} |\usf_k(t)|^2 \int_{0}^{\infty} y^{\beta} |\psi_k''(y)|^2 \diff y \lesssim \sum_{k=1}^{\infty} \lambda_k^{2s} |\usf_k(t)|^2.
\end{equation}
On the other hand, if 
$
\mathcal{D}(\ve(\cdot,t)) := \int_\C y^\alpha\big( |\calL \ve|^2 +A \nabla_{x'}\partial_y \ve
\cdot \nabla_{x'}\partial_y \ve + c |\partial_y \ve|^2\big) \diff x' \diff y
$, then we realize that 
$
\|\nabla \nabla_{x'}\ve(\cdot,t)\|_{L^2(y^\alpha,\C)}^{2} \lesssim \mathcal{D}(\ve(\cdot,t)).
$
We exploit $\int_{0}^{\infty} y^{\alpha}(\lambda_k \psi_k(y)^2 + \psi_k'(y)^2) \diff y \lesssim \lambda_{k}^s$ (see \cite[Theorem 2.7]{NOS}), to arrive at
\begin{equation}
\label{D}
\mathcal{D}(\ve(\cdot,t)) \lesssim \sum_{k=1}^{\infty} \lambda_k |\usf_k(t)|^2\int_{0}^{\infty} y^{\alpha}(\lambda_k \psi_k(y)^2 + \psi_k'(y)^2)
\lesssim \sum_{k=1}^{\infty} \lambda_k^{1+s} |\usf_k(t)|^2.
\end{equation}
We thus need to estimate $\|\usf_k\|_{L^2(0,T)}$. We distinguish between $\gamma=1$ and $\gamma<1$.

\noindent 
\boxed{2} Case $\gamma = 1$. We recall the representation formula
$\usf_k(t) = e^{-\lambda_k^s t} \usf_{0,k} + \int_{0}^t e^{-\lambda_k^s r} f_k(t-r)\diff r$
from  \S \ref{subsubsec:exponential_function},
where $\usf_{0,k} = (\usf_0,\varphi_k)_{L^2(\Omega)}$. Consequently,
we get
\begin{equation}\label{L2-uk}
\| \usf_k \|^2_{L^2(0,T)} \lesssim 
\int_{0}^T \left( \usf^2_{0,k} + 
(e^{-\lambda_k^s t} \star f_k)(t)^2 \right) \diff t
\end{equation}
and $\| e^{-\lambda_k^s t} \star f_k\|_{L^2(0,T)} \leq \lambda_{k}^{-s}\| f_k\|_{L^2(0,T)}$
according to Lemma~\ref{le:continuity}.
This, in conjunction with \eqref{V_yy} and \eqref{D}, implies
\eqref{reginx} and \eqref{reginy}.

\noindent 
\boxed{3} Case $\gamma \in (0,1)$. We recall the representation formula $\usf_k(t) = E_{\gamma,1}( -\lambda_k^s t^{\gamma} )\usf_{0,k} + \int_0^t r^{\gamma-1}E_{\gamma,\gamma}( -\lambda_k^s r^{\gamma})f_k(t-r)\diff r$ from \S \ref{subsubsec:ML_function}. Using \eqref{ML_estimate}, we deduce
\begin{equation}\label{log-term}
\|r^{\gamma-1}E_{\gamma,\gamma}( -\lambda_k^s r^{\gamma} ) \|_{L^1(0,T)} \lesssim \lambda_k^{-s}\log (1+\lambda_k^s T^{\gamma}).
\end{equation}
This, together with the preceding expression for $\usf_k(t)$ and Lemma \ref{le:continuity}, gives
\begin{equation}\label{L2-uk-gamma}
\| \usf_k\|_{L^2(0,T)}^2 \lesssim T \usf_{0,k}^2 + \lambda_k^{-2s}\log^2 (1+\lambda_k^s T^{\gamma}) \|f_k\|_{L^2(0,T)}^2.
\end{equation}
Inserting this into \eqref{V_yy} and \eqref{D}, and using that $\log (1+z)\lesssim z^\mu$ for all $z\ge 0$ and $\mu>0$,
yields the asserted estimates  \eqref{reginxgamma} and \eqref{reginygamma}.

\noindent \boxed{4}
Case $s = \frac{1}{2}$. Since $\| \ve(\cdot,t) \|^2_{H^2(\C)} \lesssim \sum_{k=1}^{\infty} \lambda_k^{\frac{3}{2}} |\usf_k(t)|^2$, applying \eqref{L2-uk} and \eqref{L2-uk-gamma} leads to \eqref{H2-gamma=1} and \eqref{H2-gamma<1}, respectively.
\end{proof}

We summarize the conclusion of Theorem~\ref{TH:regularity} as follows. Define, for $\beta > 1+2\alpha$,
\begin{equation}
\label{C(w)}
\mathcal{S}(w(\cdot,t)) := \|  \nabla \nabla_{x'} w(\cdot,t) \|_{L^2(y^\alpha,\C)} + \|  \partial_{yy} w(\cdot,t) \|_{L^2(y^\beta,\C)},
\end{equation}
and
\begin{equation}
\label{eq:defoffrakR}
  \mathcal{R}^2(\usf_0,f) = 
  \begin{dcases}
    T \| \usf_0\|^2_{\mathbb{H}^{1+s}(\Omega)}+ \| f \|_{L^2(0,T;\Ws)}^2 & \gamma = 1, \\
    T \| \usf_0\|^2_{\mathbb{H}^{1+s}(\Omega)}+
  T^{2\gamma\mu}\| f \|_{L^2(0,T;\mathbb{H}^{1-(1-2\mu)s}(\Omega))}^2 & \gamma \in (0,1).
  \end{dcases}
\end{equation}
for any $\mu > 0$. Then, for $s \in (0,1)$ and $\gamma \in (0,1]$, we have
\begin{equation}
\label{eq:S_in_spacetime}
  \| \mathcal{S}(\ve) \|_{L^2(0,T)} \lesssim \mathcal{R}(\usf_0,f).
\end{equation}
\subsubsection{Time regularity}
\label{sub:time_regularity}
We now focus on the regularity in time. For $\gamma = 1$, we could demand sufficient regularity (in time) of the right-hand side along with compatibility conditions for the initial datum $\usf_0$. We express this as
\begin{equation}
\label{eq:reg_time_beta01}
  \tr \partial_{tt}\ve \in L^2(0,T;\Hsd).
\end{equation}

For $\gamma \in (0,1)$, \eqref{eq:reg_time_beta01} is inconsistent with \eqref{u_exact}. In fact, properties of the Mittag-Leffler function and \eqref{u_exact} for $f=0$ show that \eqref{eq:reg_time_beta01} never holds if $\usf_0 \neq 0$ because  
\begin{equation}
\label{eq:asympt}
  \usf(x',t) = G_\gamma(t) \usf_0(x') = \left( 1 - \frac{t^\gamma}{\Gamma(1+\gamma)}\calL^s +\mathcal{O}(t^{2\gamma})  \right)\usf_0(x') \quad \textrm{as }t \downarrow 0.
\end{equation}
We see that derivatives of $\usf$ with respect to $t$ are unbounded as $t \downarrow 0$ for $\gamma \in (0,1)$ and, in particular, $\partial_{tt} \usf (x',t) \approx t^{\gamma-2} \calL^s \usf_0(x') \notin L^2(0,T;\Hsd)$. However
\[
  \int_{0^{+}} t^\sigma \|\partial_{tt} \usf (\cdot,t) \|_{\Hsd}^2 \diff t \lesssim \| \usf_0 \|_{\Hs}^2 \int_{0^{+}} t^{\sigma+2\gamma-4} \diff t
\]
is finite provided $\sigma > 3-2\gamma$. For this reason, when $\gamma \in (0,1)$, we assume
\begin{equation}\label{reg-time-gamma<1}
t^{\sigma/2} \tr \partial_{tt} \ve \in L^2(0,T;\Hsd) \quad \sigma > 3-2\gamma.
\end{equation}
We show below that this is a valid assumption provided $\mathcal{A}(\usf_0,f)<\infty$, where
\begin{equation}
\label{A}
\mathcal{A}(\usf_0,f) =  \| \usf_0\|_{\Hs} + \| f\|_{H^2(0,T;\Hsd)}.
\end{equation}
\vspace{-0.45cm}
\begin{theorem}[time regularity for $\gamma \in (0,1)$] 
\label{TH:regularity_in_time}
Assume that $\usf_0 \in \Hs$ and $f \in H^2(0,T;\Hsd)$. Then, for $t \in (0,T\,]$, the solution $\usf$ of \eqref{fractional_heat} satisfies
\begin{equation}
\label{eq:reg_time_betam1}
\| \partial_t \usf(\cdot,t) - \delta^1 \usf(\cdot,t) \|_{\Hsd} \lesssim t^{\gamma-1}\mathcal{A}(\usf_0,f),
\end{equation}
where $\delta^1 \usf(\cdot,t) = t^{-1}\big(\usf(\cdot,t) -\usf(\cdot,0)\big)$. Moreover,
\begin{equation}
\label{eq:reg_time_betam2}
\| t^{\sigma/2} \partial_{tt} \usf \|_{L^2(0,T;\Hsd)} \lesssim \mathcal{A}(\usf_0,f),
\end{equation}
where $\sigma > 3 -2\gamma$. The hidden constant is independent of $t$ but blows up as $\gamma \downarrow0$.
\end{theorem}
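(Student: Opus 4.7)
The plan is to exploit the spectral representation \eqref{u_exact}, reducing both bounds to pointwise-in-$k$ estimates on the scalar components
\[
\usf_k(t) = E_{\gamma,1}(-\lambda_k^s t^\gamma)\,\usf_{0,k} + \int_0^t g_k^\gamma(t-r)\,f_k(r)\,dr,\qquad g_k^\gamma(\tau):= \tau^{\gamma-1}E_{\gamma,\gamma}(-\lambda_k^s\tau^\gamma),
\]
and then summing against the spectral weights $\lambda_k^{-s}$ that characterize the $\Hsd$ norm. The main analytic tools are \eqref{ML_estimate}, which yields $|E_{\gamma,\mu}(-\lambda_k^s t^\gamma)|\lesssim(1+\lambda_k^s t^\gamma)^{-1}$, and \eqref{dEbeta1}, equivalent to $\tfrac{d}{dt}E_{\gamma,1}(-\lambda_k^s t^\gamma) = -\lambda_k^s g_k^\gamma(t)$. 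Integrating the series \eqref{ML} termwise produces the auxiliary identity $G_k(\tau):=\int_0^\tau g_k^\gamma = \lambda_k^{-s}(1-E_{\gamma,1}(-\lambda_k^s\tau^\gamma))$, a uniformly bounded primitive of $g_k^\gamma$ satisfying $|G_k(\tau)|\lesssim \min(\tau^\gamma,\lambda_k^{-s})$.

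I would prove \eqref{eq:reg_time_betam2} first by establishing the pointwise bound $\|\partial_{tt}\usf(t)\|_{\Hsd}\lesssim t^{\gamma-2}\,\mathcal{A}(\usf_0,f)$, which is consistent with the leading-order asymptotics \eqref{eq:asympt}. For the homogeneous contribution, a direct computation of $\partial_{tt}[E_{\gamma,1}(-\lambda_k^s t^\gamma)]$ combined with \eqref{ML_estimate} yields $|\partial_{tt}[E_{\gamma,1}(-\lambda_k^s t^\gamma)]|\lesssim\lambda_k^s t^{\gamma-2}/(1+\lambda_k^s t^\gamma)$; weighting by $\lambda_k^{-s}$ and summing gives $t^{2\gamma-4}\|\usf_0\|_{\Hs}^2$. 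For the inhomogeneous contribution, two integrations by parts in the convolution (using $f\in H^2(0,T;\Hsd)$) rewrite it as $G_k(t)f_k(0) + H_k(t)f_k'(0) + (H_k\star f_k'')(t)$, where $H_k:=\int_0^\cdot G_k$; differentiating twice, invoking Lemma \ref{le:continuity} together with the uniform bounds on $G_k$ and $\partial_t g_k^\gamma$, then yields the same $t^{\gamma-2}$ singularity, weighted by $\|f\|_{H^2(0,T;\Hsd)}$. Squaring and integrating against $t^\sigma\,dt$ converges precisely when $\sigma+2\gamma-4>-1$, i.e.\ $\sigma>3-2\gamma$.

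Estimate \eqref{eq:reg_time_betam1} then follows from the elementary Fubini identity
\[
\partial_t\usf(t) - \delta^1\usf(t) = \frac{1}{t}\int_0^t s\,\partial_{tt}\usf(s)\,ds,
\]
obtained by writing $\delta^1\usf(t) = t^{-1}\int_0^t\partial_t\usf(r)\,dr$ and exchanging the order of integration. Combining with the pointwise bound of the previous step,
\[
\|\partial_t\usf(t)-\delta^1\usf(t)\|_{\Hsd} \lesssim \frac{1}{t}\int_0^t s\cdot s^{\gamma-2}\,ds\;\mathcal{A}(\usf_0,f) \simeq \frac{t^{\gamma-1}}{\gamma}\,\mathcal{A}(\usf_0,f),
\]
matching both the $t^{\gamma-1}$ decay and the blow-up as $\gamma\downarrow 0$ asserted in the statement.

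The main obstacle is the uniform-in-$k$ control of $\partial_{tt}[E_{\gamma,1}(-\lambda_k^s t^\gamma)]$: the naive chain rule produces a term $\lambda_k^{2s}\gamma t^{2\gamma-2}E_{\gamma,\gamma}'(-\lambda_k^s t^\gamma)$ to which \eqref{ML_estimate} does not directly apply. The cleanest workaround is to recognize the full second derivative as $-\lambda_k^s\,\partial_t g_k^\gamma(t)$ and bound $\partial_t g_k^\gamma$ directly via a derivative version of the Mittag-Leffler estimate, $|E'_{\gamma,\gamma}(-z)|\lesssim(1+z)^{-2}$ (extractable from the asymptotic series for $E_{\gamma,\gamma}$), which gives $|\partial_t g_k^\gamma(t)|\lesssim t^{\gamma-2}/(1+\lambda_k^s t^\gamma)$ and survives the spectral summation against $\lambda_k^{-s}$ cleanly.
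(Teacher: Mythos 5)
Your treatment of \eqref{eq:reg_time_betam2} is essentially the paper's: the homogeneous part reduces to the bound $|\partial_{tt}E_{\gamma,1}(-\lambda_k^s t^\gamma)|\lesssim \lambda_k^s t^{\gamma-2}(1+\lambda_k^s t^\gamma)^{-1}$ (the paper handles the troublesome $E'_{\gamma,\gamma}$ term through the identity $\gamma z E'_{\gamma,\gamma}(z)=E_{\gamma,\gamma-1}(z)-(\gamma-1)E_{\gamma,\gamma}(z)$, which is equivalent to your derivative estimate), your double integration by parts reproduces the paper's \eqref{2nd-derivative}, and the convolution with $\diff_{tt}f_k$ is controlled, as in the paper, only in $L^2(0,T)$ via Lemma~\ref{le:continuity}. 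The genuine gap is in your route to \eqref{eq:reg_time_betam1}. You need the \emph{pointwise} bound $\|\partial_{tt}\usf(\cdot,t)\|_{\Hsd}\lesssim t^{\gamma-2}\mathcal{A}(\usf_0,f)$ to feed into the Fubini identity, but that bound is not available for the term $\int_0^t r^{\gamma-1}E_{\gamma,\gamma}(-\lambda_k^s r^\gamma)\,\diff_{tt}f_k(t-r)\diff r$ under the hypothesis $f\in H^2(0,T;\Hsd)$: when $\gamma\le \tfrac12$ the kernel $r^{\gamma-1}$ is not in $L^2(0,t)$, so pointwise evaluation of this convolution is not a bounded functional of $\diff_{tt}f_k\in L^2(0,T)$, and Lemma~\ref{le:continuity}, which you invoke, only yields an integrated-in-time bound. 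Thus the claim that the inhomogeneous contribution exhibits ``the same $t^{\gamma-2}$ singularity'' pointwise is unjustified, and the identity $\partial_t\usf(t)-\delta^1\usf(t)=t^{-1}\int_0^t s\,\partial_{tt}\usf(s)\diff s$ cannot be estimated as you propose for the forcing part.

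Nor can you repair this by substituting the weighted $L^2$ estimate \eqref{eq:reg_time_betam2} into the Fubini identity: Cauchy--Schwarz gives $t^{-1}\int_0^t s\,\|\partial_{tt}\usf(s)\|_{\Hsd}\diff s\lesssim t^{(1-\sigma)/2}\,\|s^{\sigma/2}\partial_{tt}\usf\|_{L^2(0,t;\Hsd)}$ with $\sigma>3-2\gamma$, i.e.\ a rate strictly worse than $t^{\gamma-1}$, so the sharp estimate \eqref{eq:reg_time_betam1} is lost. The paper sidesteps second derivatives entirely for \eqref{eq:reg_time_betam1}: for the forcing part it bounds $\diff_t\usf_k$ directly from the first-derivative representation \eqref{1st-derivative} (using $|f_k(0)|+\|\diff_tf_k\|_{L^\infty(0,T)}\lesssim\|f_k\|_{H^2(0,T)}$) and bounds $\delta^1\usf_k$ via $|\usf_k(t)|\lesssim t^\gamma\|f_k\|_{H^1(0,T)}$; for the homogeneous part it uses the explicit formulas $\diff_t\usf_k(t)=-\usf_{0,k}\lambda_k^s t^{\gamma-1}E_{\gamma,\gamma}(-\lambda_k^s t^\gamma)$ and $\delta^1\usf_k(t)=-\usf_{0,k}\lambda_k^s t^{\gamma-1}E_{\gamma,\gamma+1}(-\lambda_k^s t^\gamma)$ together with \eqref{ML_estimate}. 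You should replace your derivation of \eqref{eq:reg_time_betam1} by an argument of this first-derivative type (your Fubini route does remain valid for the homogeneous part alone, where the pointwise $t^{\gamma-2}$ bound is true).
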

\begin{proof} We proceed in three steps and apply the principle of superposition.

\noindent \boxed{1} Case $f \equiv 0$ and $\usf_0 \neq 0$. The solution of \eqref{fractional_heat} is $\usf(x',t) = G_{\gamma}(t) \usf_0 (x')$, which coincides with the solution representation of the alternative formulation of \eqref{fractional_heat} studied in \cite[(2.6)--(2.7)]{M:10}. The regularity results of \cite[Theorem 4.2]{M:10} yield the estimate
$
 \| \partial_{tt} \usf \|_{\Hsd} \lesssim t^{\gamma-2}\| \usf_0 \|_{\Hs}
$
for $t \in (0,T\,]$, whence \eqref{eq:reg_time_betam2} follows.

To derive \eqref{eq:reg_time_betam1} we invoke the fact that $\usf_k$ solves \eqref{uk} with $\usf_k(0) = \usf_{0,k}$ and $f_k \equiv 0$, whence
$\usf_k(t) = E_{\gamma,1}(-\lambda_k^s t^{\gamma})\usf_{0,k}$
according to \eqref{u_exact}. Using \eqref{ML}, $\usf_k(t)$ becomes
$\usf_k(t) = \sum_{m=0}^{\infty} \frac{(-\lambda_k^s t^{\gamma} )^m }{\Gamma(\gamma m + 1)} \usf_{0,k},$
whence
\begin{equation}\label{dt-uk}
\diff_t \usf_k(t) 
= -\usf_{0,k} \lambda_k^s t^{\gamma-1} \sum_{m=0}^{\infty}
\frac{(-\lambda_k^s t^{\gamma})^{m}}{\Gamma(\gamma m + \gamma)}
= - \usf_{0,k} \lambda_k^s t^{\gamma-1} E_{\gamma,\gamma}(-\lambda_k^s t^{\gamma}).
\end{equation}
Likewise, we obtain $\delta^1 \usf_k(t) = - \usf_{0,k} \lambda_k^s t^{\gamma-1}
E_{\gamma,\gamma+1}(-\lambda_k^s t^{\gamma})$.
Therefore, \eqref{eq:reg_time_betam1} follows from \eqref{ML_estimate}.

\noindent \boxed{2} 
Formula \eqref{eq:reg_time_betam1} with $\usf_0 \equiv 0$. We now have 
$\usf(x',t) = \int_{0}^{t}F_{\gamma}(t-r)f(x',r) \diff r$
with $F_{\gamma}$ given by \eqref{barG}. The representation \eqref{u_exact} gives
$
\usf_k(t) = \int_{0}^t r^{\gamma-1}E_{\gamma,\gamma}(-\lambda_k^s r^{\gamma}) f_k(t-r) \diff r,
$
because $\usf_k(0) = 0$. This, combined with \eqref{ML_estimate}, readily implies
\[
\big|\usf_k(t)\big| \le \|f_k\|_{L^\infty(0,T)} \int_0^t r^{\gamma-1}
\diff r \lesssim t^\gamma \|f_k\|_{H^1(0,T)}.
\]
Therefore, \eqref{eq:reg_time_betam1} reduces to deriving suitable bounds for
\begin{equation}\label{1st-derivative}
\diff_t \usf_k(t) = t^{\gamma-1}E_{\gamma,\gamma}(-\lambda_k^s t^{\gamma}) f_k(0) + \int_{0}^t r^{\gamma-1}E_{\gamma,\gamma}(-\lambda_k^s r^{\gamma}) \diff_t f_k(t-r) \diff r.
\end{equation}
The first term yields \eqref{eq:reg_time_betam1} because of \eqref{ML_estimate} and $|f_k(0)| \lesssim \|f_k\|_{H^1(0,T)}$. On the other hand, we use \eqref{ML_estimate} again to bound the second term $\mathfrak{J}_k$ as follows and thus get \eqref{eq:reg_time_betam1}:
\[
\mathfrak{J}_k \le \|\diff_t f_k\|_{L^\infty(0,T)}\int_0^t r^{\gamma-1} \diff r \lesssim t^\gamma \|\diff_t f_k\|_{L^\infty(0,T)}.
\]

\noindent \boxed{3} 
Formula \eqref{eq:reg_time_betam2} with $\usf_0 \equiv 0$. Differentiating \eqref{1st-derivative} once more, we obtain
\begin{equation}\label{2nd-derivative}
\begin{aligned}
\diff_{tt} \usf_k(t) &=
(\gamma-1)t^{\gamma-2}E_{\gamma,\gamma}(-\lambda_k^s t^{\gamma}) f_k(0) 
- \lambda_k^s t^{2(\gamma-1)} E'_{\gamma,\gamma}(-\lambda_k^s t^{\gamma}) f_k(0)
\\ 
& + t^{\gamma-1} E_{\gamma,\gamma}(-\lambda_k^s t^{\gamma}) \diff_t f_k(0)
+ \int_{0}^t r^{\gamma-1}E_{\gamma,\gamma}(-\lambda_k^s
r^{\gamma}) \diff_{tt} f_k(t-r) \diff r,
\end{aligned}
\end{equation}
and employ again \eqref{ML_estimate}. Since, $\sigma>3-2\gamma$ yields $\int_0^T r^{\sigma +2\gamma-4} \diff r<\infty$, the first and third terms lead to  \eqref{eq:reg_time_betam2}. For the second term we resort to the identity $ \gamma z E_{\gamma,\gamma}'(z) = E_{\gamma,\gamma-1}(z)-(\gamma-1)E_{\gamma,\gamma}(z)$ to end up with the same condition on $\sigma$. For the fourth term, we use that $\sigma>1$ and Lemma~\ref{le:continuity} to obtain the
bound 
$
T\|r^{\gamma-1} \star |\diff_{tt} f_k| \|_{L^2(0,T)} \lesssim 
\|\diff_{tt} f_k\|_{L^2(0,T)}
$. This concludes the proof.
\end{proof}

For $\gamma \in (0,1)$ it will be useful, when analyzing fully discrete schemes, to have pointwise estimates for time derivatives of the solution $\ve$. We thus define, for $\mu > 0$,
\begin{equation}
\label{eq:frakB}
\mathcal{B}(\usf_0, f ) := \| \usf_0 \|_{\mathbb{H}^{1+3s}(\Omega)} + \| f|_{t=0} \|_{\mathbb{H}^{1+s}(\Omega)} + \| f \|_{W^1_\infty(0,T;\mathbb{H}^{1-(1-2\mu)s}(\Omega))}.
\end{equation}

\begin{corollary}[pointwise estimate for time derivatives]
\label{CR:Sut}
If $\gamma \in (0,1)$, then
\begin{equation}
\label{Sut}
\mathcal{S}(\ve_t(\cdot,t)) \lesssim t^{\gamma-1} \mathcal{B}(\usf_0, f).
\end{equation}
In addition $I^{1-\gamma}\mathcal{S}(\ve_t) \in L^2(0,T)$ with 
$\|I^{1-\gamma}\mathcal{S}(\ve_t)\|_{L^2(0,T)} \lesssim \mathcal{B}(\usf_0, f)$.
\end{corollary}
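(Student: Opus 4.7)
The plan is to reduce $\mathcal{S}(\ve_t(\cdot,t))$ to a spectral sum via the representation \eqref{exactforms}, then exploit the explicit formulas for $\diff_t \usf_k$ already used in the proofs of Theorems~\ref{TH:regularity} and~\ref{TH:regularity_in_time}. Differentiating \eqref{exactforms} term by term and repeating the bounds on $\int_0^\infty y^\alpha(\lambda_k \psi_k^2 + (\psi_k')^2)\lesssim \lambda_k^s$ and $\int_0^\infty y^\beta (\psi_k'')^2 \lesssim \lambda_k^{2s}$ from Theorem~\ref{TH:regularity}, one obtains the master inequality
\[
\mathcal{S}^2(\ve_t(\cdot,t)) \lesssim \sum_{k=1}^\infty (\lambda_k^{1+s}+\lambda_k^{2s})|\diff_t \usf_k(t)|^2 \lesssim \sum_{k=1}^\infty \lambda_k^{1+s}|\diff_t \usf_k(t)|^2,
\]
so everything reduces to a pointwise estimate of $\lambda_k^{1+s}|\diff_t \usf_k(t)|^2$ followed by summation.

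By linearity, I would split $\usf_k=\usf_k^{(0)}+\usf_k^{(f)}$ into the parts driven by $\usf_{0,k}$ and $f_k$. For the initial-data part, identity \eqref{dt-uk} together with the crude bound $|E_{\gamma,\gamma}(-\lambda_k^s t^\gamma)|\lesssim 1$ from \eqref{ML_estimate} yields $|\diff_t \usf_k^{(0)}(t)|^2 \lesssim \lambda_k^{2s} t^{2\gamma-2}\usf_{0,k}^2$, which after weighting by $\lambda_k^{1+s}$ and summing produces the contribution $t^{2\gamma-2}\|\usf_0\|^2_{\mathbb{H}^{1+3s}(\Omega)}$. For the forcing part I would use \eqref{1st-derivative}: the first summand is bounded pointwise by $t^{\gamma-1}|f_k(0)|$ and contributes $t^{2\gamma-2}\|f|_{t=0}\|^2_{\mathbb{H}^{1+s}(\Omega)}$. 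The convolution summand $\mathfrak{J}_k$ is the delicate one; pulling $\diff_t f_k$ out in $L^\infty$ and using the sharper Mittag--Leffler decay $|E_{\gamma,\gamma}(-\lambda_k^s r^\gamma)|\lesssim (1+\lambda_k^s r^\gamma)^{-1}$,
\[
\int_0^t r^{\gamma-1}|E_{\gamma,\gamma}(-\lambda_k^s r^\gamma)|\diff r \lesssim \lambda_k^{-s}\log(1+\lambda_k^s t^\gamma) \lesssim \lambda_k^{-s(1-\mu)}t^{\gamma\mu},
\]
exactly as in step~3 of the proof of Theorem~\ref{TH:regularity}. Therefore $\lambda_k^{1+s}|\mathfrak{J}_k|^2 \lesssim t^{2\gamma\mu}\lambda_k^{1-(1-2\mu)s}\|\diff_t f_k\|_{L^\infty(0,T)}^2$, which contributes $t^{2\gamma\mu}\|\diff_t f\|^2_{L^\infty(0,T;\mathbb{H}^{1-(1-2\mu)s}(\Omega))}$.

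Combining the three contributions and absorbing $t^{2\gamma\mu}\le T^{2\gamma\mu+2-2\gamma}\,t^{2\gamma-2}$ for $t\in(0,T]$ (legal since $\gamma<1$ makes the exponent positive) delivers \eqref{Sut}. For the second statement, Corollary~\ref{co:continuity} is unusable because it would require $\mathcal{S}(\ve_t)\in L^2(0,T)$, which fails when $\gamma\le 1/2$. Instead I would insert \eqref{Sut} directly into the definition of $I^{1-\gamma}$ and evaluate the beta integral,
\[
I^{1-\gamma}\mathcal{S}(\ve_t)(t) \lesssim \mathcal{B}(\usf_0,f)\int_0^t (t-r)^{-\gamma}r^{\gamma-1}\diff r = \Gamma(\gamma)\Gamma(1-\gamma)\,\mathcal{B}(\usf_0,f),
\]
which is uniformly bounded in $t\in(0,T]$ and hence in $L^2(0,T)$ with norm $\lesssim \sqrt{T}\,\mathcal{B}(\usf_0,f)$. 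The main obstacle is the convolution term $\mathfrak{J}_k$: a naive bound via $|E_{\gamma,\gamma}|\lesssim 1$ would cost only a factor of $t^\gamma$ and leave $\sum_k \lambda_k^{1+s}\|\diff_t f_k\|^2_{L^\infty}$ with no $\lambda_k^{-1}$ decay --- a divergent series. One is therefore forced to extract a factor $\lambda_k^{-s}$ from the Mittag--Leffler function and then tame the residual logarithm by $\log(1+z)\lesssim z^\mu$; this is precisely the mechanism that sets the regularity exponent $1-(1-2\mu)s$ in the definition of $\mathcal{B}$.
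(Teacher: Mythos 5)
Your proof of the pointwise bound \eqref{Sut} is correct and is essentially the paper's argument: reduce via \eqref{V_yy} and \eqref{D} to $\sum_k \lambda_k^{1+s}|\diff_t \usf_k(t)|^2$, use \eqref{dt-uk} for the initial-data part and \eqref{1st-derivative} together with \eqref{log-term} and $\log(1+z)\lesssim z^{\mu}$ for the forcing part; the three contributions match exactly the three terms of $\mathcal{B}(\usf_0,f)$ in \eqref{eq:frakB}, and your absorption of the $t$-independent convolution term into $t^{\gamma-1}$ at the price of a $T$-dependent constant is the same (implicit) step the paper takes. For the second assertion, however, you take a genuinely different route: you apply $I^{1-\gamma}$ directly to the pointwise majorant $t^{\gamma-1}$ and use the Beta-function identity $\int_0^t (t-r)^{-\gamma}r^{\gamma-1}\diff r=\Gamma(\gamma)\Gamma(1-\gamma)$, obtaining a uniform (even $L^\infty(0,T)$) bound, hence the $L^2$ bound with an explicit constant; the positivity of the kernel makes this transfer of the pointwise estimate legitimate. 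The paper instead argues through mapping properties of $I^{1-\gamma}$: Corollary~\ref{co:continuity} on $L^2$ for $\gamma\in(\tfrac12,1)$, and a Hardy--Littlewood-type result $I^{1-\gamma}:L\log L(0,T)\to L^{1/\gamma}(0,T)$ for $\gamma\in(0,\tfrac12]$. Your computation is more elementary and treats all $\gamma\in(0,1)$ uniformly (with the expected blow-up as $\gamma\downarrow 0$ through $\Gamma(\gamma)$); what the paper's operator-norm formulation buys is reusability: in the proof of Theorem~\ref{th:order_beta} the same mechanism is invoked verbatim for the sequence of local averages $Z^\tau$, where only an integrability-class bound (not a clean pointwise $r^{\gamma-1}$ majorant at the evaluation points) is immediately at hand, whereas your beta-integral shortcut would need a small additional argument there.
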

\begin{proof}
In view of \eqref{V_yy} and \eqref{D}, as well as $s \in (0,1)$, we see that
$
\mathcal{S}(\ve_t(\cdot,t))^2 \lesssim \sum_{k=0}^\infty \lambda_k^{1+s}
\big|\diff_t \usf_k(t) \big|^2.
$
Since $\diff_t \usf_k(t)$ is the sum of \eqref{dt-uk} and
\eqref{1st-derivative}, we deduce
\[
\big|\diff_t \usf_k(t) \big| \le \big|\usf_{0,k}\big| \lambda_k^s t^{\gamma-1}
+ t^{\gamma-1} \big| f_k(0) \big|
+ \lambda_k^{-s} \log (1+\lambda_k^s T^\gamma) \|\diff_t f_k\|_{L^\infty(0,T)},
\]
where we have used \eqref{log-term}. This readily implies \eqref{Sut}.

We now prove $I^{1-\gamma}\mathcal{S}(\ve_t) \in L^2(0,T)$. For $\gamma \in (\tfrac{1}{2},1)$ this follows from \eqref{Sut} and Corollary~\ref{co:continuity}. If $\gamma \in (0,\tfrac{1}{2}]$, we first note that $t^{\gamma-1} \in L\log L (0,T)$. A generalization of a theorem by Hardy and Littlewood \cite[Theorem 4]{GMJ:5227752} shows that $I^{1-\gamma}:  L \log L(0,T) \rightarrow L^{1/\gamma}(0,T)$ boundedly. Since $1/\gamma \geq 2$, this concludes the proof.
\end{proof}


\section{Time discretization}
\label{sec:time_discretization}

Let $\K \in \mathbb{N}$ denote the number of time steps. We define the uniform time step as $\tau = T/\K > 0$, and set $t_k = k \tau$ for $0 \leq k \leq \K$. We also define $I_k= (t_k,t_{k+1}]$ for $0 \leq k \leq \K-1$. If $\Xcal$ is a normed space with norm $\| \cdot \|_{\Xcal}$, then for $\phi \in C ( [0,T], {\Xcal} )$ we denote $\phi^k = \phi(t_k)$ and 
$\phi^{\tau}= \{ \phi^k\}_{k=0}^{\K}$. Moreover, 
\[
\| \phi^{\tau} \|_{\ell^{\infty}(\Xcal)} = \max_{0 \leq k \leq \K} \| \phi^k\|_{\Xcal},
\qquad \| \phi^{\tau} \|_{\ell^2(\Xcal)}^2 = \sum_{k=1}^\K \tau \| \phi^k\|_{\Xcal}^2.
\]
For a sequence of time-discrete functions $W^{\tau} \subset \Xcal$ we define, for $k=0,\dots,\K-1$,
\begin{equation}
\label{partial}
\delta^1 W^{k+1} = \tau^{-1} (W^{k+1} - W^{k}). 
\end{equation}

\subsection{Time discretization for $\gamma =1$}
\label{sub:discretization_1}
We apply the backward Euler scheme to \eqref{heat_harmonic_extension_weak} for $\gamma=1$: determine $V^\tau = \{ V^k\}_{k=0}^{\K}  \subset \HL(y^{\alpha},\C)$ such that
\begin{equation}
\label{initial_data_cont}
 \tr V^{0} = \usf_0,
\end{equation}
and, for $k=0,\dots, \K-1$, $V^{k+1} \in \HL(y^{\alpha},\C)$ solves
\begin{equation}
\label{first_order}
   \left( \delta^1 \tr  V^{k+1}, \tr W \right)_{L^2(\Omega)}+
    a(V^{k+1},W) = \left\langle f^{k+1}, \tr W   \right\rangle,
 \end{equation}
for all $W \in \HL(y^{\alpha},\C)$, where $f^{k+1} = f(t^{k+1})$. Define $U^\tau = \{ U^k\}_{k=0}^{\K}  \subset \Hs$ with
\begin{equation}
\label{discrete_U}
 U^k:= \tr V^k,
\end{equation}
which is a piecewise constant (in time) approximation of $\usf$, solution to problem \eqref{fractional_heat}. Note that \eqref{initial_data_cont} does not require an extension of $\usf_0$.

\begin{remark}[dynamic boundary condition] \rm
Problem \eqref{initial_data_cont}--\eqref{first_order} is a sequence of elliptic problems with dynamic boundary condition, the discrete counterpart of \eqref{heat_harmonic_extension_weak}. Its analysis is slightly different from the standard theory for parabolic problems.
\end{remark}

\begin{remark}[locality] \rm
The main advantage of scheme \eqref{initial_data_cont}--\eqref{first_order} is its local nature, which mimics that of problem \eqref{heat_harmonic_extension_weak}.
\end{remark}

The stability of this scheme is rather elementary as the following result shows.

\begin{lemma}[unconditional stability for $\gamma=1$]
\label{le:stab_1}
The semi-discrete scheme \eqref{initial_data_cont}--\eqref{first_order} is unconditionally stable, namely
\begin{equation}
\label{stability_1} \| \tr V^{\tau} \|^2_{\ell^{\infty}(L^2(\Omega))} + \| V^{\tau} \|^2_{\ell^2(\HLn(y^{\alpha},\C))}
\lesssim  
\| \usf_0 \|^2_{L^2(\Omega)} + \| f^{\tau} \|^2_{\ell^2(\Hsd)}.
\end{equation}
\end{lemma}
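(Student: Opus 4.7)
The plan is the standard energy-argument for backward Euler, adapted to the dynamic-boundary structure of \eqref{first_order}. Since the bilinear form $a$ already lives on $\C$ and the dissipation comes from it, the key is to use coercivity of $a$ on $\HL(y^\alpha,\C)$ to absorb the trace coupling on the right-hand side.

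First, I would test \eqref{first_order} with $W = V^{k+1}$. The time term yields the algebraic identity
\[
\left( \delta^1 \tr V^{k+1}, \tr V^{k+1}\right)_{L^2(\Omega)}
= \tfrac{1}{2}\delta^1 \|\tr V^{k+1}\|_{L^2(\Omega)}^2
+ \tfrac{\tau}{2}\|\delta^1 \tr V^{k+1}\|_{L^2(\Omega)}^2,
\]
so in particular it is bounded below by $\tfrac{1}{2}\delta^1 \|\tr V^{k+1}\|_{L^2(\Omega)}^2$. The diffusive term is controlled from below via Remark~\ref{rem:equivalent}: $a(V^{k+1},V^{k+1}) \gtrsim \|V^{k+1}\|_{\HLn(y^\alpha,\C)}^2$, combining the boundedness and coercivity of $a$ with the weighted Poincaré inequality \eqref{Poincare_ineq}.

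Next I would estimate the forcing. Using the duality pairing and the trace estimate \eqref{Trace_estimate},
\[
\langle f^{k+1}, \tr V^{k+1}\rangle
\le \|f^{k+1}\|_{\Hsd}\,\|\tr V^{k+1}\|_{\Hs}
\le C_{\tr}\,\|f^{k+1}\|_{\Hsd}\,\|V^{k+1}\|_{\HLn(y^\alpha,\C)},
\]
and a Young's inequality with a small parameter $\epsilon$ allows a fraction of $\|V^{k+1}\|_{\HLn(y^\alpha,\C)}^2$ to be absorbed into the coercivity term, leaving $\tfrac{C_{\tr}^2}{2\epsilon}\|f^{k+1}\|_{\Hsd}^2$ on the right.

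Finally, I would multiply by $\tau$ and sum over $k = 0,\dots,K-1$ for arbitrary $K\le \mathcal{K}$. The time term telescopes to $\tfrac{1}{2}\|\tr V^{K}\|_{L^2(\Omega)}^2 - \tfrac{1}{2}\|\tr V^{0}\|_{L^2(\Omega)}^2$, and using \eqref{initial_data_cont} the second contribution becomes $\tfrac{1}{2}\|\usf_0\|_{L^2(\Omega)}^2$. The summed diffusive term gives $\|V^\tau\|_{\ell^2(\HLn(y^\alpha,\C))}^2$ up to a multiplicative constant, and the summed right-hand side gives $\|f^\tau\|_{\ell^2(\Hsd)}^2$. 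Taking the maximum over $K$ on the left produces the $\ell^\infty(L^2(\Omega))$ norm of $\tr V^\tau$, yielding \eqref{stability_1}. I do not expect any real obstacle here: the only subtlety is recognizing that the $\Hs$-norm on the trace must be controlled by the full $\HL(y^\alpha,\C)$-norm of the extension via \eqref{Trace_estimate}, rather than by some boundary-only dissipation (which the scheme does not provide).
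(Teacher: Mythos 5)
Your proposal is correct and follows essentially the same route as the paper: test \eqref{first_order} with (a multiple of) $V^{k+1}$, use the identity for the backward difference together with the coercivity of $a$ from Remark~\ref{rem:equivalent}, control the forcing via \eqref{Trace_estimate} and Young's inequality, and sum the resulting telescoping inequality. The extra details you spell out (the $\tfrac{\tau}{2}\|\delta^1\tr V^{k+1}\|^2$ term, the absorption parameter $\epsilon$, taking the maximum over the partial sums for the $\ell^\infty$ bound) are all consistent with the paper's more compressed argument.
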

\begin{proof}
Set $W = 2 \tau V^{k+1}$ in \eqref{first_order}. 
Estimate \eqref{Trace_estimate} and Young's inequality yield
\begin{align*}
\| \tr V^{k+1} \|^2_{L^2(\Omega)} - \| \tr V^{k} \|^2_{L^2(\Omega)} + \tau\| V^{k+1} \|^2_{\HLn(y^{\alpha},\C)}
\lesssim & 
\tau \| f^{k+1} \|^2_{\Hsd}.
\end{align*}
Adding this inequality over $k$ yields \eqref{stability_1}.
\end{proof}

\subsection{Time discretization for $\gamma \in (0,1)$}
\label{sub:discretization_beta}

We now discretize the nonlocal operator $\partial_t^{\gamma}$ of order $\gamma \in (0,1)$.
We consider the finite difference scheme proposed in \cite{LLX:11,LinXu:07} but resort to the regularity results of Theorem~\ref{TH:regularity_in_time}. Definition~\ref{caputo}
and the Taylor formula with integral remainder yield, for $0 \leq k \leq \K - 1$,
\begin{equation}
  \begin{aligned}
    \partial_t^{\gamma}\usf(\cdot,t_{k+1}) & = \frac{1}{\Gamma(1-\gamma)}
    \int_0^{t_{k+1}} \frac{\partial_{t}\usf(\cdot,t)}{(t_{k+1} - t)^{\gamma}} \diff t
    \\
    & =
    \frac{1}{\Gamma(1-\gamma)}\sum_{j=0}^{k} \frac{\usf(\cdot,t_{j+1}) - \usf(\cdot,t_{j})}{ \tau }
    \int_{I_j}  \frac{\diff t}{(t_{k+1} - t)^{\gamma}}
    + \resto_{\gamma}^{k+1}(\cdot)
    \\
    & =
    \frac{1}{\Gamma(2-\gamma)}\sum_{j=0}^{k} a_j \frac{\usf(\cdot,t_{k+1-j}) - \usf(\cdot,t_{k-j})}{ \tau^{\gamma} }
    + \resto_{\gamma}^{k+1}(\cdot),
  \label{discretization_fractional}
  \end{aligned}
\end{equation}
where 
\begin{equation}
\label{a_j}
a_j = (j+1)^{1-\gamma} - j^{1-\gamma}, \qquad 
\resto_{\gamma}^{k+1} = \frac{1}{\Gamma(1-\gamma)} \sum_{j=0}^{k} \int_{I_j} \frac{1}{(t_{k+1}-t)^{\gamma}} R(\cdot,t) \diff t
\end{equation}
denotes the remainder and $R$ is defined by
\begin{equation}\label{R}
 R(\cdot,t) = \partial_t \usf(\cdot,t) - \frac{1}{\tau}\big( \usf(\cdot,t_{j+1}) - \usf(\cdot,t_j) \big)
\qquad \forall t \in I_j.
\end{equation}
Notice that from \eqref{a_j} we deduce that $a_j > 0$ for all $j\geq 0$ and
\[
 1 = a_0 > a_1 > a_2 > \dots > a_j, 
\qquad 
\lim_{j \rightarrow \infty} a_j = 0.
\]

\subsubsection{Consistency estimate}
\label{sub:sub:consistency}
We now estimate the residual $\resto_{\gamma}^{\tau}$ by exploiting a cancellation property. We first observe that the function $R$ defined in \eqref{R} has vanishing mean in $I_j$ for all $j \in \{0,\dots,\K-1\}$, whence we can write
\begin{equation}
\label{resto:aux}
\resto_{\gamma}^{k+1} = \frac{1}{\Gamma(1-\gamma)} \sum_{j=0}^{k} \int_{I_j} ( \psi_{\gamma}(t) - \bar{\psi}^j_{\gamma} )R(\cdot,t) \diff t, 
\end{equation}
with $\psi_{\gamma}(t) = (t_{k+1}-t)^{-\gamma}$ and $\bar{\psi}^j_{\gamma} = \fint_{I_j} \psi_{\gamma}(t) \diff t$.
The conclusion of Lemma~\ref{le:continuity} yields
\begin{equation}
\label{eq:restoYoung}
\| \resto_\gamma^\tau \|_{L^2(0,T;\Hsd)} \lesssim \| \psi_{\gamma} - \bar{\psi}^{\tau}_{\gamma}\|_{L^1(0,T)} \| R^\tau \|_{L^2(0,T;\Hsd)},
\end{equation}
which reduces the estimation of the residual to providing suitable bounds for each term on the right hand side of this expression.
We start with $\|R^{\tau}\|_{L^2(0,T;\Hsd)}$.

\begin{lemma}[estimate for $R^{\tau}$] 
\label{le:R}
If $\mathcal{A}(\usf_0,f)< \infty$, then $R^\tau$ defined by \eqref{R} satisfies
\[
  \|R^{\tau}\|_{L^2(0,T;\Hsd)} \lesssim \tau^{1 -\tfrac{\sigma}{2}}\mathcal{A}(\usf_0,f),
\]
for $\sigma > 3-2\gamma$.
\end{lemma}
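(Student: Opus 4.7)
My plan is to decompose $\|R^\tau\|_{L^2(0,T;\Hsd)}^2 = \sum_{j=0}^{\K-1}\|R\|_{L^2(I_j;\Hsd)}^2$ and estimate the first subinterval $I_0$ separately from the later subintervals $I_j$ with $j\geq 1$. The essential ingredients throughout are the zero-mean property $\int_{I_j} R(\cdot,t)\,dt = 0$ of $R$ on each $I_j$, together with the weighted regularity bounds \eqref{eq:reg_time_betam1} and \eqref{eq:reg_time_betam2}.

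For $j\geq 1$ I would exploit the zero mean to write
\begin{equation*}
R(\cdot,t) = \frac{1}{\tau}\int_{I_j} \bigl[\partial_t \usf(\cdot,t) - \partial_t \usf(\cdot,s)\bigr]\,ds,
\qquad t \in I_j,
\end{equation*}
so that the fundamental theorem of calculus yields the pointwise bound $\|R(\cdot,t)\|_{\Hsd} \le \int_{I_j}\|\partial_{tt}\usf(\cdot,r)\|_{\Hsd}\,dr$. A Cauchy--Schwarz inequality with weight $r^{\sigma/2}$ then gives $\|R(\cdot,t)\|_{\Hsd}^2 \le \bigl(\int_{I_j} r^{-\sigma}\,dr\bigr)\, B_j \le \tau\, t_j^{-\sigma}\, B_j$, with $B_j := \int_{I_j} r^\sigma\|\partial_{tt}\usf(\cdot,r)\|_{\Hsd}^2\,dr$. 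Integrating in $t$ over $I_j$ and summing, and using $t_j\ge \tau$ along with \eqref{eq:reg_time_betam2}, gives
\begin{equation*}
\sum_{j\ge 1} \|R\|_{L^2(I_j;\Hsd)}^2 \le \tau^2 \sum_{j\ge 1} t_j^{-\sigma} B_j \le \tau^{2-\sigma}\,\mathcal{A}(\usf_0,f)^2.
\end{equation*}

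The estimate on $I_0$ is the main obstacle: the argument above breaks down because $\int_0^\tau r^{-\sigma}\,dr$ diverges for $\sigma \ge 1$, and the pointwise blow-up $\|\partial_t \usf(\cdot,t)\|_{\Hsd}\sim t^{\gamma-1}$ may even prevent $\partial_t \usf$ from being in $L^2(0,\tau;\Hsd)$ when $\gamma\le 1/2$. To obtain the $\tau^{1-\sigma/2}$ rate one must exploit the cancellation in $R$ more carefully. I would split
\begin{equation*}
R(\cdot,t) = \bigl[\partial_t \usf(\cdot,t) - \delta^1 \usf(\cdot,t)\bigr] + \bigl[\delta^1 \usf(\cdot,t) - \delta^1 \usf(\cdot,\tau)\bigr],
\end{equation*}
control the first bracket by \eqref{eq:reg_time_betam1} (which gives $t^{\gamma-1}\mathcal{A}(\usf_0,f)$), and treat the second by rewriting the difference of averages in terms of $\partial_t \usf$-increments and combining the weighted bound \eqref{eq:reg_time_betam2} with \eqref{eq:reg_time_betam1} applied at the endpoint $\tau$. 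The condition $\sigma > 3-2\gamma$ is precisely what makes the resulting integrals converge at the rate $\tau^{2-\sigma}$, matching the contribution from $j\ge 1$. Adding both estimates and taking square roots yields the stated bound $\|R^\tau\|_{L^2(0,T;\Hsd)}\lesssim \tau^{1-\sigma/2}\,\mathcal{A}(\usf_0,f)$.
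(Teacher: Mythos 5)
Your treatment of the intervals $I_j$, $j\ge 1$, is exactly the paper's argument (the zero--mean/Taylor bound $\|R(\cdot,t)\|_{\Hsd}\le\int_{I_j}\|\partial_{tt}\usf\|_{\Hsd}\diff r$, weighted Cauchy--Schwarz, and \eqref{eq:reg_time_betam2}), so that part is fine. The genuine gap is on $I_0$, and your proposed fix cannot close it. Under your reading of the norm you must bound $\int_0^\tau\|\partial_t\usf(\cdot,t)-\tau^{-1}(\usf(\cdot,\tau)-\usf(\cdot,0))\|^2_{\Hsd}\diff t$; since $\partial_t\usf(\cdot,t)\approx-\tfrac{t^{\gamma-1}}{\Gamma(\gamma)}\mathcal{L}^s\usf_0$ as $t\downarrow0$ while the subtracted term is constant in $t$, this integral is $+\infty$ for every $\gamma\in(0,\tfrac12]$ already when $f\equiv0$ and $\usf_0\neq0$. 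Your splitting does not produce the extra cancellation you hope for: both brackets are genuinely of size $t^{\gamma-1}$. Indeed, by the Mittag--Leffler representation,
\begin{equation*}
\partial_t\usf_k(t)-\delta^1\usf_k(t)=-\usf_{0,k}\lambda_k^s t^{\gamma-1}\bigl[E_{\gamma,\gamma}(-\lambda_k^s t^{\gamma})-E_{\gamma,\gamma+1}(-\lambda_k^s t^{\gamma})\bigr],
\end{equation*}
and $E_{\gamma,\gamma}(0)=1/\Gamma(\gamma)\neq1/\Gamma(\gamma+1)=E_{\gamma,\gamma+1}(0)$, so the bound \eqref{eq:reg_time_betam1} is sharp; similarly $\delta^1\usf(\cdot,t)\approx-\tfrac{t^{\gamma-1}}{\Gamma(1+\gamma)}\mathcal{L}^s\usf_0$, so the second bracket blows up at the same rate. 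Hence for $\gamma\le\tfrac12$ the quantity you set out to estimate on $I_0$ is infinite and no refinement of this route can yield $\tau^{1-\sigma/2}$. (For $\gamma>\tfrac12$ your argument does work and gives a contribution $\tau^{\gamma-1/2}\le\tau^{1-\sigma/2}$.)

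The paper avoids the issue by the meaning it attaches to $R^\tau$: following its convention $\phi^\tau=\{\phi(t_k)\}_{k}$, the norm in the lemma is the discrete one, and the last display of the paper's proof states explicitly $\|R^\tau\|^2_{L^2(0,T;\Hsd)}=\sum_{j=1}^{\K}\tau\|R(t_j)\|^2_{\Hsd}$, so only nodal values enter. On the first interval this leaves the single term $R(t_1)=\partial_t\usf(\cdot,t_1)-\tau^{-1}(\usf(\cdot,t_1)-\usf(\cdot,0))$, which is precisely the quantity controlled by \eqref{eq:reg_time_betam1} at $t=\tau$, giving $\tau\|R(t_1)\|^2_{\Hsd}\lesssim\tau^{2\gamma-1}\mathcal{A}(\usf_0,f)^2\le\tau^{2-\sigma}\mathcal{A}(\usf_0,f)^2$ since $\sigma>3-2\gamma$; no further cancellation is needed. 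With this reading, your $j\ge1$ computation evaluated at the nodes is exactly the paper's proof, so the only repair your argument needs is to adopt the discrete interpretation of $\|R^\tau\|_{L^2(0,T;\Hsd)}$ and to replace your $I_0$ step by the direct application of \eqref{eq:reg_time_betam1} at $t=\tau$.
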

\begin{proof}
For $1\leq j \leq \K-1$ and $t \in I_j$, \eqref{R} implies
\begin{align*}
\|R(t_{j+1}) \|_{\Hsd}  
\leq \int_{I_j} \| \partial_{tt}\usf(z)\|_{\Hsd} \diff z
\leq  \| z^{-\sigma/2}\|_{L^2(I_j)} \| z^{\sigma/2} \partial_{tt}\usf \|_{L^2(I_j;\Hsd) },
\end{align*}
whence
\begin{align*}
  \tau \sum_{j=2}^{\K} \|R(t_j)\|^2_{\Hsd} & \leq \tau \sum_{j=2}^{\K} \left(\int_{I_j} z^{-\sigma} \diff z\right) \left(\int_{I_j} z^\sigma \|\partial_{tt} \usf \|_{\Hsd}^2 \diff z\right) \\
  & \leq \tau \max_j \left( \int_{I_j} z^{-\sigma} \diff z \right) \| t^{\sigma/2} \partial_{tt} \usf \|_{L^2(\tau,T;\Hsd)}^2\\
  & \leq \tau^{2-\sigma} \| t^{\sigma/2} \partial_{tt} \usf \|_{L^2(\tau,T;\Hsd)}^2
  \lesssim \tau^{2-\sigma} \mathcal{A}(\usf_0,f)^2,
\end{align*}
in view of \eqref{eq:reg_time_betam2}. For the first interval $I_0 = (0,\tau]$, we combine \eqref{eq:reg_time_betam1} with \eqref{R} to get
\begin{align*}
\| R (t_1) \|_{\Hsd} = \left\| \partial_t \usf(t_1) - \delta^1 \usf(t_1)\right\|_{\Hsd} 
\lesssim \tau^{\gamma-1}\mathcal{A}(\usf_0,f). 
\end{align*}
Collecting the preceding estimates we arrive at
\[
 \|R^{\tau}\|^2_{L^2(0,T;\Hsd)}
 =  \sum_{j=1}^{\K} \tau \| R(t_j) \|^2_{\Hsd}
 \lesssim \tau^{2-\sigma}\mathcal{A}(\usf_0,f)^2,
\]
where we have used that $2-\sigma < 2\gamma - 1$. This concludes the proof.
\end{proof}

We now estimate the $L^1$-norm of $\psi_{\gamma} - \bar{\psi}^{\tau}_{\gamma}$.

\begin{lemma}[kernel estimate] The kernel $\psi_{\gamma}=(t_{k+1}-t)^{-\gamma}$ satisfies
\label{le:kernel}
\begin{equation*}
 \| \psi_{\gamma} - \bar{\psi}^{\tau}_{\gamma} \|_{L^1(0,T)} \leq \frac{2-\gamma}{1-\gamma} \tau^{1-\gamma}.
\end{equation*}
\end{lemma}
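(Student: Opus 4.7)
My plan is to decompose the $L^1$ norm interval by interval and handle separately the \emph{interior} subintervals $I_0,\dots,I_{k-1}$, on which $\psi_\gamma$ is smooth and monotone, from the last subinterval $I_k$, on which $\psi_\gamma$ has an integrable singularity at $t_{k+1}$. Since $\psi_\gamma$ and $\bar\psi_\gamma^\tau$ are both supported in $(0,t_{k+1}]$ and $t_{k+1}\leq T$, one has
$$\|\psi_\gamma - \bar\psi_\gamma^\tau\|_{L^1(0,T)} \;=\; \sum_{j=0}^{k} \int_{I_j} |\psi_\gamma(t) - \bar\psi_\gamma^j|\, dt.$$

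For the interior intervals with $j<k$, $\psi_\gamma$ is continuous and monotone increasing on $I_j$, so the average $\bar\psi_\gamma^j$ lies between the endpoint values $\psi_\gamma(t_j)$ and $\psi_\gamma(t_{j+1})$. This yields the pointwise oscillation bound $|\psi_\gamma(t)-\bar\psi_\gamma^j|\leq \omega_j:=\psi_\gamma(t_{j+1})-\psi_\gamma(t_j)$ for every $t\in I_j$, hence $\int_{I_j}|\psi_\gamma-\bar\psi_\gamma^j|\,dt \leq \tau\,\omega_j$. Summing in $j$ telescopes to
$$\sum_{j=0}^{k-1}\int_{I_j}|\psi_\gamma-\bar\psi_\gamma^j|\,dt \;\leq\; \tau\bigl[\psi_\gamma(t_k)-\psi_\gamma(t_0)\bigr] \;=\; \tau\bigl[\tau^{-\gamma}-((k+1)\tau)^{-\gamma}\bigr] \;\leq\; \tau^{1-\gamma}.$$

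For the singular interval $I_k$, the substitution $u=t_{k+1}-t$ gives explicitly $\bar\psi_\gamma^k=\tau^{-\gamma}/(1-\gamma)$ and $\int_{I_k}\psi_\gamma\,dt=\tau^{1-\gamma}/(1-\gamma)$. I would then use the triangle inequality relative to the minimum value $\psi_\gamma(t_k)=\tau^{-\gamma}$: both $\psi_\gamma\geq\tau^{-\gamma}$ on $I_k$ and $\bar\psi_\gamma^k\geq\tau^{-\gamma}$, so
$$\int_{I_k}|\psi_\gamma-\bar\psi_\gamma^k|\,dt \;\leq\; \int_{I_k}(\psi_\gamma-\tau^{-\gamma})\,dt + \tau\bigl(\bar\psi_\gamma^k-\tau^{-\gamma}\bigr) \;=\; \frac{2\gamma}{1-\gamma}\,\tau^{1-\gamma},$$
and then add this to the interior contribution. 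The main obstacle is exactly this singular piece: $I_k$ dominates the estimate as $\gamma\uparrow 1$, and pinning down the stated constant $(2-\gamma)/(1-\gamma)$ requires a finer treatment — for instance, locating the unique point $t^{\ast}=t_{k+1}-(1-\gamma)^{1/\gamma}\tau$ at which $\psi_\gamma=\bar\psi_\gamma^k$, splitting the $I_k$ integral at $t^{\ast}$, and exploiting the mean-zero cancellation $\int_{I_k}(\psi_\gamma-\bar\psi_\gamma^k)\,dt=0$ together with the monotonicity of $\psi_\gamma$. The crude triangle inequality already captures the correct $\tau^{1-\gamma}$ scaling, but extracting the sharp $\gamma$-dependent constant is the delicate accounting step.
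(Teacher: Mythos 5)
Your treatment of the interior intervals is correct and essentially coincides with the paper's: your oscillation bound $\tau\,\omega_j$ equals the paper's bound $\tau\int_{I_j}|\psi_\gamma'|$ (by monotonicity), and both telescope to $\tau^{1-\gamma}\bigl(1-(k+1)^{-\gamma}\bigr)\le\tau^{1-\gamma}$. The gap is entirely on the singular interval $I_k$, and it is not merely a matter of finer accounting. With the genuine average $\bar\psi_\gamma^k=\fint_{I_k}\psi_\gamma=\tau^{-\gamma}/(1-\gamma)$, which is what you use, your triangle inequality gives $\frac{2\gamma}{1-\gamma}\tau^{1-\gamma}$, hence a total of $\frac{1+\gamma}{1-\gamma}\tau^{1-\gamma}$, which is weaker than the asserted $\frac{2-\gamma}{1-\gamma}\tau^{1-\gamma}$ for every $\gamma>1/2$. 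Worse, the refinement you sketch cannot close the gap: the mean-zero splitting at $t^\ast$ can be carried out exactly and yields
\[
\int_{I_k}\Bigl|\psi_\gamma-\fint_{I_k}\psi_\gamma\Bigr|\,dt
\;=\;2\gamma\,(1-\gamma)^{\frac{1}{\gamma}-2}\,\tau^{1-\gamma},
\]
and for $\gamma$ close to $1$ this single term already exceeds the full claimed bound (at $\gamma=0.9$ it is $\approx 13.9\,\tau^{1-\gamma}$ versus $\frac{2-\gamma}{1-\gamma}\tau^{1-\gamma}=11\,\tau^{1-\gamma}$). So no argument that keeps the true average on $I_k$ can produce the stated constant; with that reading the inequality is in fact false near $\gamma=1$.

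The idea you are missing is the convention the paper adopts on the last interval: there one does \emph{not} subtract the mean but sets $\bar\psi_\gamma^k:=0$. This is legitimate in the only place the lemma is used, namely \eqref{resto:aux}--\eqref{eq:restoYoung}, because $R$ has vanishing mean on every $I_j$, so on each subinterval one may subtract \emph{any} constant from $\psi_\gamma$: the average where $\psi_\gamma$ is smooth, and zero on the singular interval. With this choice $\int_{I_k}\psi_\gamma\,dt=\frac{\tau^{1-\gamma}}{1-\gamma}$ supplies the $\frac{1}{1-\gamma}$ part of the constant, the telescoped interior sum supplies $1-(k+1)^{-\gamma}\le 1$, and the stated $\frac{2-\gamma}{1-\gamma}$ follows. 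Your bound $\frac{1+\gamma}{1-\gamma}\tau^{1-\gamma}$ does capture the correct $\tau$-scaling and would suffice for Proposition~\ref{pro:consistency}, but as a proof of Lemma~\ref{le:kernel} with its stated constant it is incomplete, and the route you propose for recovering that constant is a dead end.
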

\begin{proof}
We split the integral over intervals $I_j$. We first consider $0 \leq j < k$:
\begin{align*}
\int_{I_j} | \psi_{\gamma}(t) - \bar{\psi}^j_{\gamma} | \diff t 
& = \frac{1}{\tau} \int_{I_j} \Big| \int_{I_j} ( \psi_{\gamma}(t) - \psi_{\gamma}(r)) \diff r \Big| \diff t
\leq \tau \int_{I_j} | \psi_{\gamma}'(t) |\diff t
\\
& = \tau \gamma  
\int_{I_j} \frac{1}{(t_{k+1}-t)^{\gamma+1}} \diff t = \tau^{1-\gamma}
\left[ \frac{1}{(k-j)^{\gamma}} - \frac{1}{(k-j+1)^{\gamma}} \right],
\end{align*}
If $j = k$ 
set $\bar{\psi}^{k}_{\gamma} = 0$ and 
$
 \int_{I_k} \psi_{\gamma}(t) \diff t = \int_{I_k} (t_{k+1}-t)^{-\gamma} \diff t
 = \frac{\tau^{1-\gamma}}{1-\gamma}.
$
Consequently,
\begin{align*}
 \|  \psi_{\gamma} - \bar{\psi}^{\tau}_{\gamma} \|_{L^1(0,T)} & = \sum_{j=0}^k \int_{t_{j}}^{t_{j+1}} | \psi_{\gamma}(t) - \bar{\psi}^j_{\gamma} | \diff t
\\
& \leq \tau^{1-\gamma} \left( \frac{1}{1-\gamma} + 
\sum_{j=0}^{k-1}\left[ \frac{1}{(k-j)^{\gamma}} - \frac{1}{(k-j+1)^{\gamma}} \right] \right) 
\\
& = \tau^{1-\gamma} \left( \frac{1}{1-\gamma} + 1 -\frac{1}{(k+1)^{\gamma}}  \right)
\leq \frac{2-\gamma}{1-\gamma} \tau^{1-\gamma},
\end{align*}
which concludes the proof.
\end{proof}

We now derive an estimate for $\resto_{\gamma}^{\tau}$, which, although yields lower rates of convergence than \cite[(3.4)]{LLX:11}, takes into account the correct behavior of the solution and the singularity of its derivatives as $t\downarrow0$.

\begin{proposition}[consistency]
\label{pro:consistency}
The fractional residual $\resto_{\gamma}^{\tau} = \{ \resto_{\gamma}^{k} \}_{k=0}^{\K}$ satisfies
\begin{equation}
\label{consistency}
\| \resto_{\gamma}^{\tau} \|_{L^2(0,T;\Hsd)} \lesssim \tau^{\theta} \mathcal{A}(\usf_0,f) \quad 0< \theta < \frac{1}{2}.
\end{equation}
The hidden constant is independent of the data and $\tau$ but blows up as $\theta \uparrow \frac{1}{2}$.
\end{proposition}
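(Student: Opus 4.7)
The plan is to simply assemble three pieces that are already in place: the Young-type estimate \eqref{eq:restoYoung}, the kernel bound from Lemma~\ref{le:kernel}, and the residual bound from Lemma~\ref{le:R}. Since \eqref{eq:restoYoung} is obtained via the cancellation identity \eqref{resto:aux} followed by the continuity of convolution (Lemma~\ref{le:continuity}), the whole argument boils down to a careful bookkeeping of exponents; the real work has already been done in securing the sharp regularity \eqref{eq:reg_time_betam2} and in exploiting the zero-mean property of $R$ on each $I_j$.

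Concretely, I would start from \eqref{eq:restoYoung} and plug in the two bounds. Lemma~\ref{le:kernel} supplies
\[
  \|\psi_{\gamma} - \bar{\psi}^{\tau}_{\gamma}\|_{L^1(0,T)} \le \tfrac{2-\gamma}{1-\gamma}\,\tau^{1-\gamma},
\]
while Lemma~\ref{le:R} gives, for each admissible $\sigma > 3 - 2\gamma$,
\[
  \|R^{\tau}\|_{L^2(0,T;\Hsd)} \lesssim \tau^{1 - \sigma/2}\,\mathcal{A}(\usf_0,f).
\]
Multiplying these two estimates yields
\[
  \|\resto_\gamma^{\tau}\|_{L^2(0,T;\Hsd)} \lesssim \tau^{\,2 - \gamma - \sigma/2}\,\mathcal{A}(\usf_0,f),
\]
so the natural choice of exponent is $\theta := 2-\gamma-\sigma/2$.

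The final step is to extract the admissible range of $\theta$: the constraint $\sigma > 3-2\gamma$ is equivalent to $\theta < 2-\gamma - (3/2 - \gamma) = 1/2$, and conversely any prescribed $0 < \theta < 1/2$ is realized by setting $\sigma := 2(2-\gamma-\theta) > 3-2\gamma$ and running the argument above. This gives \eqref{consistency}. The blow-up of the hidden constant as $\theta \uparrow 1/2$ is inherited from the blow-up of the $L^1$-near-origin integral $\int_0^T t^{-\sigma}\,\mathrm{d}t$ used inside the proof of Lemma~\ref{le:R} (and of Theorem~\ref{TH:regularity_in_time}) as $\sigma \downarrow 3-2\gamma$, which matches exactly the statement that the constant degenerates as $\theta \uparrow 1/2$. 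I do not anticipate any genuine obstacle here: the two delicate pieces were the cancellation trick \eqref{resto:aux} that converts the pointwise singular kernel $\psi_\gamma$ into a zero-mean quantity (turning a would-be divergent $\tau^{-\gamma}$ into $\tau^{1-\gamma}$), and the weighted $L^2$-in-time regularity of $\partial_{tt}\usf$; both are already available, and Proposition~\ref{pro:consistency} only combines them.
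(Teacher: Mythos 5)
Your proposal is correct and follows exactly the paper's argument: the proof of Proposition~\ref{pro:consistency} is precisely the combination of \eqref{eq:restoYoung} with Lemmas~\ref{le:R} and~\ref{le:kernel}, and your bookkeeping $\theta = 2-\gamma-\sigma/2$, equivalent to choosing $\sigma = 2(2-\gamma-\theta) > 3-2\gamma$ for any $\theta \in (0,\tfrac12)$, is the intended reading of the exponent. The blow-up as $\theta \uparrow \tfrac12$ is indeed inherited from the degeneration of the weighted regularity estimate \eqref{eq:reg_time_betam2} as $\sigma \downarrow 3-2\gamma$ (more precisely from $\int_0^T t^{\sigma+2\gamma-4}\diff t$ in Theorem~\ref{TH:regularity_in_time}), as you indicate.
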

\begin{proof}
The assertion follows from \eqref{eq:restoYoung} and Lemmas \ref{le:R} and \ref{le:kernel}.
\end{proof}

\subsubsection{Abstract stability and energy estimates}
\label{sub:sub:energy}

To fix the ideas concerning the application of the discretization \eqref{discretization_fractional}, we present an approach within a general Hilbert space setting. Given a Gelfand triple $\Vcal \subset \Hcal \equiv \Hcal' \subset \Vcal'$, let $\mathfrak{F}: \Vcal \rightarrow \Vcal'$ be a linear, continuous and coercive operator. If $(\cdot,\cdot)_{\Hcal}$ is the inner product in $\Hcal$, set
\[
 \| U \|_{\Hcal} = (U,U)_{\Hcal}^{1/2}, \qquad \| U \|_{\Vcal} = \langle \mathfrak{F} U,U\rangle^{1/2},
\]
where $\langle \cdot,\cdot \rangle$ denotes the duality pairing
between $\Vcal$ and $\Vcal'$.
Given $f \in L^2(0,T;\Vcal')$ and $\usf_0 \in \Hcal$, 
we study a time discretization scheme for the fractional evolution problem
\begin{equation}
\label{fractional_A}
\partial^{\gamma}_t \usf + \mathfrak{F} \usf = f, \qquad  \usf(0) = \usf_0.
\end{equation}

If $\gamma\in(0,1)$ and $\phi^\tau \subset \Hcal$, we define, according to \eqref{discretization_fractional},  the discrete fractional derivative, for $k=0, \ldots, \K-1$ by
\begin{equation}
\label{L_discrete}
\Gamma(2-\gamma)\delta^\gamma \phi^{k+1} := \sum_{j=0}^{k} \frac{  a_j }{ \tau^{\gamma-1} } \delta^1\phi^{k+1-j}
= \frac{\phi^{k+1}}{\tau^\gamma} - \sum_{j=0}^{k-1} \frac{ a_j - a_{j+1} }{\tau^\gamma} \phi^{k-j} - \frac{a_k}{\tau^\gamma} \phi^0
\end{equation}
where the second equality holds because $a_0 = 1$ and the sum for $k=0$ is defined to be zero.
The implicit semi-discrete scheme to solve \eqref{fractional_A} reads: Let $U^{0} = \usf_0$ and,
for $k=0,\dots, \K-1$, let $U^{k+1} \in \Vcal$ solve
\begin{equation}
\label{discrete_beta_abs}
(\delta^\gamma U^{k+1},W)_{\Hcal} + \langle \mathfrak{F} U^{k+1},W\rangle= \langle f^{k+1},W \rangle,\
\quad \forall W \in \Vcal.
\end{equation}

We have the following stability result.
\begin{theorem}[unconditional stability for $\gamma \in (0,1)$]
\label{TH:stabbeta}
The implicit semi-discrete scheme \eqref{discrete_beta_abs} is unconditionally stable and satisfies
\begin{equation}
\label{stab_beta_re}
I^{1-\gamma} \| U^{\tau}\|_\Hcal^2(T)
+ \| U^{\tau} \|^2_{\ell^2(\Vcal)} \leq 
  I^{1-\gamma} \| U^0\|_\Hcal^2(T) +  \| f^{\tau} \|^2_{\ell^2(\Vcal')}.
\end{equation}
\end{theorem}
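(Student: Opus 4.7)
The plan is to test the scheme \eqref{discrete_beta_abs} with $W = U^{k+1}$, which yields
$(\delta^\gamma U^{k+1}, U^{k+1})_\Hcal + \| U^{k+1} \|_\Vcal^2 = \langle f^{k+1}, U^{k+1}\rangle$, and then apply Cauchy–Schwarz together with Young's inequality on the right-hand side to absorb $\tfrac{1}{2} \| U^{k+1} \|_\Vcal^2$.

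The main obstacle, and the crux of the argument, is a discrete chain-rule-type inequality for $\delta^\gamma$, namely
\[
  2 ( \delta^\gamma U^{k+1}, U^{k+1} )_\Hcal \;\ge\; \delta^\gamma \| U^{k+1} \|_\Hcal^2.
\]
To prove it I would start from the second representation of $\delta^\gamma$ in \eqref{L_discrete}, rewrite it with $\phi^{k+1} = U^{k+1}$, and note that the coefficients in front of $U^{k-j}$ (for $j=0,\dots,k-1$) and $U^0$ are all \emph{nonnegative} (since $a_j>a_{j+1}>0$ and $a_k>0$) and, combined with the leading $+1$ in front of $U^{k+1}$, form a convex-combination structure: in fact $\sum_{j=0}^{k-1}(a_j-a_{j+1}) + a_k = a_0 = 1$. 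Taking the inner product against $U^{k+1}$, bounding each cross term via $2(U^i,U^{k+1})_\Hcal \le \| U^i \|_\Hcal^2 + \| U^{k+1}\|_\Hcal^2$, and using the convexity identity gives exactly the desired inequality. Combining this with the tested equation produces
\[
  \tfrac{1}{2}\,\delta^\gamma \| U^{k+1} \|_\Hcal^2 + \tfrac{1}{2}\| U^{k+1} \|_\Vcal^2 \le \tfrac{1}{2}\| f^{k+1} \|_{\Vcal'}^2.
\]

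The final step is to multiply by $\tau$ and sum from $k=0$ to $\K-1$, and to identify the telescoped fractional derivative with an $I^{1-\gamma}$ at time $T$. Using $\delta^\gamma \phi^{k+1} = \tau^{1-\gamma}\Gamma(2-\gamma)^{-1}\sum_{j=0}^k a_j \delta^1 \phi^{k+1-j}$, swapping the order of summation and telescoping $\delta^1$, one gets
\[
  \sum_{k=0}^{\K-1} \tau\, \delta^\gamma \phi^{k+1}
  = \frac{\tau^{1-\gamma}}{\Gamma(2-\gamma)} \sum_{l=1}^{\K} a_{\K-l}\, \phi^l
  \;-\; \phi^0\, \frac{T^{1-\gamma}}{\Gamma(2-\gamma)}.
\]
Interpreting $\phi^\tau$ as the left-continuous piecewise-constant function with value $\phi^l$ on $I_{l-1}$, the identity $\int_{t_{l-1}}^{t_l}(T-t)^{-\gamma}\diff t = \tau^{1-\gamma} a_{\K-l}/(1-\gamma)$ shows that the first sum is exactly $I^{1-\gamma}\phi^\tau(T)$, while the second term is $I^{1-\gamma}\phi^0(T)$ since $I^{1-\gamma}$ of a constant evaluated at $T$ equals $T^{1-\gamma}/\Gamma(2-\gamma)$ times that constant.

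Applying this identity to $\phi^{k+1} = \| U^{k+1} \|_\Hcal^2$ and to $\phi^{k+1} = \| f^{k+1} \|_{\Vcal'}^2$ or simply recognising $\sum \tau \| f^{k+1}\|_{\Vcal'}^2 = \| f^\tau \|_{\ell^2(\Vcal')}^2$ and $\sum \tau \| U^{k+1}\|_\Vcal^2 = \| U^\tau \|_{\ell^2(\Vcal)}^2$ gives
\[
  I^{1-\gamma}\| U^\tau \|_\Hcal^2(T) - I^{1-\gamma}\| U^0 \|_\Hcal^2(T) + \| U^\tau \|_{\ell^2(\Vcal)}^2 \le \| f^\tau \|_{\ell^2(\Vcal')}^2,
\]
which is \eqref{stab_beta_re} after rearranging. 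I expect the main technical point where one must be careful is the convex-combination argument for the discrete fractional chain rule; everything else is algebraic bookkeeping.
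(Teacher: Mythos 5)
Your proposal is correct and follows essentially the same route as the paper: your ``discrete fractional chain rule'' $2(\delta^\gamma U^{k+1},U^{k+1})_\Hcal \ge \delta^\gamma \|U^{k+1}\|_\Hcal^2$ is proved by exactly the Cauchy--Schwarz/telescoping argument ($a_j-a_{j+1}>0$, $\sum_{j=0}^{k-1}(a_j-a_{j+1})+a_k=1$) that the paper carries out inline after testing with $2\kappa U^{k+1}$, and your Abel-summation identification of $\frac{\tau^{1-\gamma}}{\Gamma(2-\gamma)}\sum_{l=1}^{\K}a_{\K-l}\phi^l$ with $I^{1-\gamma}\phi^\tau(T)$ for the left-continuous piecewise-constant interpolant is the paper's closing computation. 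The only difference is organizational (packaging the coercivity step as a separate lemma and summing via $\delta^1$ telescoping rather than summing the per-step weighted inequality), so no further changes are needed.
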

\begin{proof}
Denote $\kappa = \Gamma(2-\gamma)\tau^{\gamma}$ and set $W = 2 \kappa U^{k+1}$ in \eqref{discrete_beta_abs}.
We obtain
\begin{align*}
 2\|U^{k+1} \|^2_{\Hcal}
& + 2 \kappa \| U^{k+1} \|^2_{\Vcal}
\\
& = 2\sum_{j=0}^{k-1} ( a_j - a_{j+1} )(U^{k-j},U^{k+1})_{\Hcal}
+ 2a_k (U^0,U^{k+1})_{\Hcal} + 2 \kappa \langle f^{k+1},U^{k+1} \rangle,
\end{align*}
for $0 \leq k \leq \K -1$ provided the sum vanishes for $k=0$. Using the Cauchy-Schwarz inequality, the fact that $a_j - a_{j+1}>0$, and the telescopic property of the sum 
$
\sum_{j=0}^{k-1} (a_j - a_{j+1}) = 1- a_k$, we obtain for $0 \leq k \leq \K -1
$ 
\begin{align*}
\nonumber
\left(2 -(1-a_k) -a_k \right)\|U^{k+1} \|^2_{\Hcal} & + \kappa \| U^{k+1} \|^2_{\Vcal}
\\
& \leq \sum_{j=0}^{k-1} ( a_j - a_{j+1} ) \|U^{k-j} \|^2_{\Hcal} + a_k \| U^0\|^2_{\Hcal} + \kappa \| f^{k+1}\|^2_{\Vcal'}.
\end{align*}
A simple manipulation of the left-hand side of this inequality yields
\[
\sum_{j=0}^{k} a_j \|U^{k+1-j} \|^2_{\Hcal} + \kappa   \| U^{k+1} \|^2_{\Vcal} 
\leq \sum_{j=0}^{k-1} a_j \|U^{k-j} \|^2_{\Hcal} + a_k\| U^0\|^2_{\Hcal} + \kappa \| f^{k+1}\|^2_{\Vcal'},
\]
where the sum on the right-hand side vanishes for $k=0$. Adding over $k$ we get
\[ 
\sum_{j=0}^{\K-1} a_j \|U^{\K-j} \|^2_{\Hcal}
+ \kappa \sum_{k=1}^{\K}\| U^{k} \|^2_{\Vcal} \leq 
  \left( \sum_{k=0}^{\K-1}a_k \right)
  \| U^0 \|_{\Hcal}^2 +  \kappa \sum_{k=1}^{\K}\| f^{k} \|^2_{\Vcal'}.
\]
Since $I^{1-\gamma}1(T) = \frac{\tau^{1-\gamma}}{\Gamma(2-\gamma)} \sum_{k=0}^{\K-1} a_k$, multiplying
this inequality by $\frac{\tau^{1-\gamma}}{\Gamma(2-\gamma)}$, we obtain
\begin{equation}
\label{stab_beta}
\frac{\tau^{1-\gamma}}{\Gamma(2-\gamma)} \sum_{j=0}^{\K-1} a_j \|U^{\K-j} \|^2_{\Hcal} + \| U^{\tau} \|^2_{\ell^2(\Vcal)} \leq I^{1-\gamma} \| U^0 \|_{\Hcal}^2(T) +  \| f^{\tau} \|^2_{\ell^2(\Vcal')}.
\end{equation}
Now, changing the summation index and using the definition \eqref{a_j}, we obtain
\begin{align*}
\sum_{j=0}^{\K-1} a_j \|U^{\K-j} \|^2_{\Hcal} 
& =  \frac{1}{\tau^{1-\gamma}}
\sum_{l=1}^{\K} \left( (T - t_{l-1} )^{1-\gamma} - (T - t_{l} )^{1-\gamma}\right) \|U^l \|^2_{\Hcal} 
\\
& = \frac{1-\gamma}{\tau^{1-\gamma}} \sum_{l=1}^{\K} \int_{t_{l-1}}^{t_l} \frac{\|U^{\tau}(r) \|^2_{\Hcal} }{(T-r)^{\gamma}}
\diff r,
\end{align*}
whence
$
\frac{\tau^{1-\gamma}}{\Gamma(2-\gamma)}\sum_{j=0}^{\K-1} a_j \|U^{\K-j} \|^2_{\Hcal} 
= I^{1-\gamma}\|U^\tau \|^2_{\Hcal}(T),
$
which together with \eqref{stab_beta} yields the desired estimate \eqref{stab_beta_re}.
\end{proof}

Deducing an energy estimate for problem \eqref{fractional_A} is nontrivial due to the nonlocality of the fractional time derivative. The main technical difficulty lies on the fact that a key ingredient in deriving such a result is 
an integration by parts formula, which for a function $\usf$ not vanishing at $t=0$ and $t=T$ involves boundary terms that need to be estimated; for a step in this direction see \cite{ER:06,LiXu:09}. In this sense, the discrete energy estimate \eqref{stab_beta_re} has an important consequence at the continuous level. 

\begin{corollary}[fractional energy estimate for $\usf$]
\label{CO:energybeta}
Let $\gamma \in (0,1)$. Then, 
\begin{equation}
\label{stab_beta_re2}
I^{1-\gamma} \|\usf\|_\Hcal^2(T)
+ \|\usf \|^2_{L^2(0,T;\Vcal)} \leq 
I^{1-\gamma} \| \usf_0 \|_\Hcal^2(T) +  \| f \|^2_{L^2(0,T;\Vcal')}.
\end{equation}
\end{corollary}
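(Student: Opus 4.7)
The plan is to obtain \eqref{stab_beta_re2} as a vanishing-$\tau$ limit of the discrete energy estimate \eqref{stab_beta_re}. Given $\usf_0\in\Hcal$ and $f\in L^2(0,T;\Vcal')$, I would pick a uniform step $\tau = T/\K$, set $U^{0}=\usf_0$, and use data averages $f^{k+1} := \tau^{-1}\int_{I_k} f(t)\,\diff t \in \Vcal'$ for $0\le k\le \K-1$. The implicit scheme \eqref{discrete_beta_abs} has a unique solution $U^\tau=\{U^k\}_{k=0}^\K\subset \Vcal$ by the Lax--Milgram lemma applied at each step (the coefficient of $U^{k+1}$ in \eqref{L_discrete} is $\tau^{-\gamma}/\Gamma(2-\gamma)>0$, so $\mathfrak{F}$ is augmented by a positive multiple of the identity). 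Denote by $\bar U^\tau$ the piecewise constant interpolant $\bar U^\tau(t) = U^{k+1}$ on $I_k$. Theorem~\ref{TH:stabbeta}, together with Jensen's inequality $\|f^\tau\|^2_{\ell^2(\Vcal')}\le \|f\|^2_{L^2(0,T;\Vcal')}$ and the equality $I^{1-\gamma}\|U^0\|_\Hcal^2(T)=I^{1-\gamma}\|\usf_0\|_\Hcal^2(T)$, gives the uniform bound
\[
I^{1-\gamma}\|\bar U^\tau\|_\Hcal^2(T) + \|\bar U^\tau\|^2_{L^2(0,T;\Vcal)} \leq I^{1-\gamma}\|\usf_0\|_\Hcal^2(T) + \|f\|^2_{L^2(0,T;\Vcal')}.
\]

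The next step is to extract, by the Banach--Alaoglu theorem, a subsequence (not relabeled) along which $\bar U^\tau \rightharpoonup \bar\usf$ weakly in $L^2(0,T;\Vcal)$ and also weakly in the weighted Hilbert space $L^2((0,T),(T-r)^{-\gamma}\diff r;\Hcal)$, where the uniform bound sits. The crux is to identify $\bar\usf$ with the unique solution $\usf$ of \eqref{fractional_A} furnished by Theorem~\ref{thm:exis_uniq}. I would first assume the extra regularity $\usf_0\in\Hs$ and $f\in H^2(0,T;\Hsd)$, so that Proposition~\ref{pro:consistency} applies: the discrete fractional derivative $\delta^\gamma U^{k+1}$ equals $\partial_t^\gamma \usf(t_{k+1})$ modulo a residual that vanishes in $L^2(0,T;\Vcal')$ as $\tau\downarrow 0$. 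Testing \eqref{discrete_beta_abs} against time-independent $W\in\Vcal$ multiplied by a smooth temporal cutoff, and passing to the limit, then shows $\bar\usf=\usf$. A density argument in $\Hcal\times L^2(0,T;\Vcal')$ (combined with continuous dependence on data, which the stability estimate itself delivers at the discrete level) extends the identification to general $(\usf_0,f)$.

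With $\bar\usf=\usf$ established, the conclusion follows from weak lower semicontinuity: the squared norm $\|\cdot\|_{L^2(0,T;\Vcal)}^2$ is lower semicontinuous under weak $L^2(0,T;\Vcal)$ convergence, and $I^{1-\gamma}\|\cdot\|_\Hcal^2(T)=\tfrac{1}{\Gamma(1-\gamma)}\int_0^T (T-r)^{-\gamma}\|\cdot\|_\Hcal^2\diff r$ is lower semicontinuous under weak convergence in the weighted space. Taking $\liminf_{\tau\downarrow 0}$ in the displayed uniform bound yields \eqref{stab_beta_re2}. The principal obstacle I expect is the identification $\bar\usf=\usf$: the discrete Caputo operator \eqref{L_discrete} is a long-range sum of past values, so commuting the weak limit with this sum genuinely requires the uniform consistency afforded by Proposition~\ref{pro:consistency} for a dense class of smoother data, followed by density. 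Everything else is a routine weak-compactness and semicontinuity argument.
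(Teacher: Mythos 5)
Your proposal is correct and rests on the same two ingredients as the paper's proof: the uniform discrete stability \eqref{stab_beta_re} of Theorem~\ref{TH:stabbeta} and the consistency estimate \eqref{consistency} of Proposition~\ref{pro:consistency}, followed by letting $\tau \downarrow 0$. The paper's argument is exactly this but stated tersely (the identification of the limit is implicit in the strong convergence of the semi-discrete solution that stability plus consistency deliver, cf.\ Theorem~\ref{thm:semidicserr}); your weak-compactness, lower-semicontinuity and density scaffolding is a more explicit rendering of the same limit passage, and in particular handles rough data $(\usf_0,f)\in\Hcal\times L^2(0,T;\Vcal')$, a point the paper leaves unaddressed, while preserving the unit constant in \eqref{stab_beta_re2}.
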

\begin{proof}
Given that the estimate \eqref{stab_beta_re} is uniform in $\tau$,
and $\| \resto_{\gamma}^{k+1} \|_{L^2(0,T;\Vcal')} \lesssim \tau^{\theta}$ with $0 < \theta < \frac{1}{2}$, we easily derive \eqref{stab_beta_re2} by taking $\tau \downarrow 0$ in \eqref{stab_beta_re}.
\end{proof}

\begin{remark}[limiting case] 
\label{RE:militing_case} \rm
Given $g \in L^p(0,T)$,  we have $I^\sigma g \rightarrow g$ 
in $L^p(0,T)$ as $\sigma \downarrow 0$; see \cite[Theorem 2.6]{Samko}. This implies that, taking the limit as $\gamma \uparrow 1$
in \eqref{stab_beta_re2}, we recover the well known stability result for a parabolic equation, \ie
\begin{equation}
\label{stab_beta_1}
\|\usf\|_{L^\infty(0,T;\Hcal)}^2
+ \|\usf \|^2_{L^2(0,T;\Vcal)} \leq 
\| \usf_0 \|_\Hcal^2 +  \| f \|^2_{L^2(0,T;\Vcal')}.
\end{equation}
This allows us to unify the estimate of Corollary~\ref{CO:energybeta} for all $\gamma \in (0,1]$.
\end{remark}

\subsection{Discrete stability}
\label{sub:Stab}

We now apply the ideas developed in \S \ref{sub:discretization_1} and \S \ref{sub:discretization_beta} to problem \eqref{fractional_heat}, \ie we consider $\mathfrak{F} = \calLs$. As it was discussed in \S\ref{sub:CaffarelliSilvestre}, we realize the nonlocal spatial operator $\calLs$ with the Caffarelli-Silvestre extension and look for solutions of the extended problem \eqref{heat_harmonic_extension_weak}. In view of \eqref{first_order} and \eqref{discrete_beta_abs}, we propose the following \emph{semi-discrete} numerical scheme to approximate problem \eqref{heat_harmonic_extension_weak} for $\gamma \in (0,1]$:

Set $\tr V^{0} = \usf_0$. For $k=0,\dots, \K-1$ find $V^{k+1} \in \HL(y^{\alpha},\C)$, solution of
\begin{equation}
\label{discrete_beta}
(  \delta^\gamma \tr V^{k+1},\tr W)_{L^2(\Omega)} 
+ 
a(V^{k+1},W) = \langle f^{k+1}, \tr W \rangle,
\end{equation}
for all  $W \in\HL(y^{\alpha},\C)$, where $a$ is the bilinear form defined in \eqref{a}, and $\delta^\gamma$ is defined by \eqref{L_discrete} for $\gamma \in (0,1)$ and \eqref{partial} for $\gamma = 1$. We have the following stability result. 

\begin{corollary}[unconditional stability for $0 < \gamma \leq 1$]
\label{co:stabbeta}
The semi-discrete scheme \eqref{discrete_beta} is unconditionally stable and satisfies
\begin{align}
\nonumber
I^{1-\gamma} \| \tr V^{\tau}\|_{L^2(\Omega)}^2(T) & + \| V^{\tau} \|^2_{\ell^2(\HLn(y^{\alpha},\C))} 
\\ 
\label{stab_beta_re_2}
& \lesssim 
I^{1-\gamma} \| \usf_0 \|_{L^2(\Omega)}^2(T) +  \| f^{\tau} \|^2_{\ell^2(\Hsd)}.
\end{align}
\end{corollary}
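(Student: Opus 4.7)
The plan is to apply the abstract Theorem~\ref{TH:stabbeta}, but since the semi-discrete scheme \eqref{discrete_beta} does not fit cleanly into the Gelfand-triple framework of \S\ref{sub:sub:energy} (the time derivative acts on the trace $\tr V^{k+1} \in L^2(\Omega)$, while the bilinear form $a$ acts on the full extension $V^{k+1} \in \HL(y^{\alpha},\C)$), I would mimic its proof rather than quote it verbatim. The case $\gamma=1$ follows immediately from Lemma~\ref{le:stab_1}, since $I^{0}$ reduces to the identity in the limit (cf.~Remark~\ref{RE:militing_case}) and $\|\tr V^{\tau}(T)\|^2_{L^2(\Omega)} \leq \|\tr V^{\tau}\|^2_{\ell^{\infty}(L^2(\Omega))}$, so I focus on $\gamma \in (0,1)$.

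For $\gamma \in (0,1)$, set $\kappa = \Gamma(2-\gamma)\tau^{\gamma}$ and test \eqref{discrete_beta} with $W = 2\kappa V^{k+1}$. Using the explicit form \eqref{L_discrete} of $\delta^{\gamma}$, the time-derivative contribution becomes
\[
2\|\tr V^{k+1}\|^2_{L^2(\Omega)} - 2\sum_{j=0}^{k-1} (a_j - a_{j+1})(\tr V^{k-j},\tr V^{k+1})_{L^2(\Omega)} - 2 a_k (\tr V^{0},\tr V^{k+1})_{L^2(\Omega)}.
\]
Cauchy--Schwarz and Young's inequality on the cross terms, together with $a_j - a_{j+1} > 0$ and the telescoping identity $\sum_{j=0}^{k-1}(a_j - a_{j+1}) = 1 - a_k$, reproduce verbatim the manipulation in the proof of Theorem~\ref{TH:stabbeta} and leave on the left the discrete difference $\sum_{j=0}^{k} a_j \|\tr V^{k+1-j}\|^2_{L^2(\Omega)} - \sum_{j=0}^{k-1} a_j \|\tr V^{k-j}\|^2_{L^2(\Omega)}$. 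Remark~\ref{rem:equivalent} provides $2\kappa \, a(V^{k+1},V^{k+1}) \gtrsim \kappa\|V^{k+1}\|^2_{\HL(y^{\alpha},\C)}$, while the right-hand side is controlled by the trace estimate \eqref{Trace_estimate} and Young's inequality, $2\kappa\langle f^{k+1},\tr V^{k+1}\rangle \leq \tfrac{\kappa}{2}\|V^{k+1}\|^2_{\HL(y^{\alpha},\C)} + C\kappa\|f^{k+1}\|^2_{\Hsd}$, the first term being absorbed into the coercive contribution.

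Summing over $k = 0,\dots,\K-1$, multiplying by $\tau^{1-\gamma}/\Gamma(2-\gamma)$, and invoking the same computation used at the end of the proof of Theorem~\ref{TH:stabbeta} to recognize
\[
\frac{\tau^{1-\gamma}}{\Gamma(2-\gamma)}\sum_{j=0}^{\K-1} a_j \|\tr V^{\K-j}\|^2_{L^2(\Omega)} = I^{1-\gamma}\|\tr V^{\tau}\|^2_{L^2(\Omega)}(T),
\]
yields \eqref{stab_beta_re_2}. The only real obstacle is accounting bookkeeping: because the time-derivative term lives on $\Omega\times\{0\}$ while coercivity lives on the whole cylinder $\C$, one must consistently use \eqref{Trace_estimate} to pass between $\|\cdot\|_{\Hs}$ of the trace and $\|\cdot\|_{\HL(y^{\alpha},\C)}$ of the extension whenever the source term is estimated; no new idea beyond Theorem~\ref{TH:stabbeta} is needed.
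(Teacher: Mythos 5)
Your proof is correct and follows essentially the same route as the paper: the paper's proof simply sets $\Vcal=\Hs$, $\Hcal=L^2(\Omega)$ and invokes Theorem~\ref{TH:stabbeta} for $\gamma\in(0,1)$ and Lemma~\ref{le:stab_1} for $\gamma=1$, whereas you inline the identical energy argument directly on the extended scheme \eqref{discrete_beta}, with the coercivity of $a$ (Remark~\ref{rem:equivalent}) and the trace estimate \eqref{Trace_estimate} standing in for the abstract $\Vcal$-norm. This is a sound way of handling the trace/extension bookkeeping you were worried about, but it introduces no new idea beyond the paper's argument.
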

\begin{proof}
Set $\mathcal{V} = \Hs$ and $\mathcal{H}=L^2(\Omega)$, and apply
Theorem \ref{TH:stabbeta} for $\gamma \in (0,1)$ and Lemma \ref{le:stab_1} for $\gamma = 1$. 
\end{proof}

\subsection{Error Estimates}
\label{sub:ErrTimedisc}
We present semi-discrete error estimates for \eqref{discrete_beta}.
\begin{theorem}[error estimates for semi-discrete schemes]
\label{thm:semidicserr}
Denote by $\ve$ and $V^\tau$ the solutions to \eqref{heat_harmonic_extension_weak} and \eqref{discrete_beta}, respectively. If $\gamma \in (0,1)$ and $\mathcal{A}(\usf_0,f) < \infty$,
then
\begin{equation}
\label{eq:semi-discreteg}
\left[ I^{1-\gamma}\|\tr (\ve^\tau - V^\tau) \|_{L^2(\Omega)}^2(T) \right]^{\tfrac{1}{2}} 
+ \|  \ve^\tau - V^\tau \|_{\ell^{2}(\HLn(y^\alpha,\C))}
\lesssim \tau^{\theta} \mathcal{A}(\usf_0,f), 
\end{equation}
where $0 < \theta < \frac{1}{2}$ and the hidden constant is independent of the 
data and $\tau$ but blows up for $\theta \uparrow \frac{1}{2}$.
If, on the other hand $\gamma = 1$, then we have
\begin{align}
\nonumber
\|\tr (\ve^\tau - V^\tau) \|_{\ell^\infty(L^2(\Omega))} & +
\|  \ve^\tau - V^\tau \|_{\ell^{2}(\HLn(y^\alpha,\C))}
\\
\label{eq:semi-discrete1}
& \lesssim \tau^{\frac{1}{2}} \left( \| \usf_0\|_{\Hs} + \|f\|_{L^2(0,T;L^2(\Omega))} \right),
\end{align}
or
\begin{align}
\nonumber
\|\tr (\ve^\tau - V^\tau) \|_{\ell^\infty(L^2(\Omega))} & + \|  \ve^\tau - V^\tau \|_{\ell^{2}(\HLn(y^\alpha,\C))}
\\
\label{eq:semi-discrete1D}
&\lesssim \tau\left(  \| \usf_0 \|_{\mathbb{H}^{2s}(\Omega)} + \| f \|_{BV(0,T;L^2(\Omega))} \right),
\end{align}
where, again, the hidden constant is independent of the data and $\tau$.
\end{theorem}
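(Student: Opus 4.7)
The three estimates all follow the same consistency-plus-stability pattern: write the error equation, apply the stability theorem to it, and then control the residual under the appropriate regularity of $(\usf_{0},f)$.

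Setting $E^{k+1}:=\ve(t_{k+1})-V^{k+1}$, we have $\tr E^{0}=0$. I would evaluate \eqref{heat_harmonic_extension_weak} at $t=t_{k+1}$ and use \eqref{discretization_fractional} (or its backward Euler analogue when $\gamma=1$) to replace $\tr\partial_{t}^{\gamma}\ve^{k+1}$ by $\delta^{\gamma}\tr\ve^{k+1}$ up to a residual $\resto_{\gamma}^{k+1}$; subtracting \eqref{discrete_beta} then gives, for every $W\in\HL(y^{\alpha},\C)$,
\[
(\delta^{\gamma}\tr E^{k+1},\tr W)_{L^{2}(\Omega)}+a(E^{k+1},W)=-\langle\resto_{\gamma}^{k+1},\tr W\rangle.
\]
Thus $E^{\tau}$ solves the same scheme as $V^{\tau}$ with vanishing initial datum and forcing $-\resto_{\gamma}^{\tau}$; Corollary~\ref{co:stabbeta} (respectively Lemma~\ref{le:stab_1} for $\gamma=1$) applied to $E^{\tau}$ yields
\[
\bigl[I^{1-\gamma}\|\tr E^{\tau}\|_{L^{2}(\Omega)}^{2}(T)\bigr]^{1/2}+\|E^{\tau}\|_{\ell^{2}(\HLn(y^{\alpha},\C))}\lesssim\|\resto_{\gamma}^{\tau}\|_{\ell^{2}(\Hsd)},
\]
reducing the theorem to a residual estimate.

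For $\gamma\in(0,1)$, Proposition~\ref{pro:consistency} gives precisely this bound, $\|\resto_{\gamma}^{\tau}\|_{\ell^{2}(\Hsd)}\lesssim\tau^{\theta}\mathcal{A}(\usf_{0},f)$ for any $\theta<\tfrac{1}{2}$, which closes \eqref{eq:semi-discreteg}. For $\gamma=1$ with the stronger data $\usf_{0}\in\mathbb{H}^{2s}(\Omega)$ and $f\in BV(0,T;L^{2}(\Omega))$, I would write $\resto^{k+1}=\tau^{-1}\int_{I_{k}}\tr(\partial_{t}\ve(t)-\partial_{t}\ve(t_{k+1}))\,dt$, bound its $\Hsd$-norm by the total variation of $\tr\partial_{t}\ve$ on $I_{k}$, and sum over $k$; the equation \eqref{heat_harmonic_extension_weak} together with the elliptic control of Remark~\ref{rem:initial_data} transfers the $BV$-regularity of $f$ and the $\mathbb{H}^{2s}$-regularity of $\usf_{0}$ to $\tr\partial_{t}\ve$, giving the $\tau$-rate \eqref{eq:semi-discrete1D}.

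The main obstacle is the minimal-regularity estimate \eqref{eq:semi-discrete1}: with only $\usf_{0}\in\Hs$ and $f\in L^{2}(0,T;L^{2}(\Omega))$, a pointwise Taylor bound on $\resto^{k+1}$ is unavailable because $\partial_{tt}\ve$ is not square-integrable in time. I would circumvent this by a classical two-scale decomposition of the data into a smooth part (for which the $\tau$-order argument above applies) and a rough remainder (controlled through Lemma~\ref{le:stab_1} alone), optimizing over the splitting to recover the intermediate rate $\tau^{1/2}$. Equivalently, one can test the error equation by $2\tau E^{k+1}$ to derive an energy identity whose consistency term involves only $\partial_{t}\ve$, and then invoke the first-order energy bound $\|\partial_{t}\ve\|_{L^{2}(0,T;\Hsd)}\lesssim\|\usf_{0}\|_{\Hs}+\|f\|_{L^{2}(0,T;L^{2}(\Omega))}$ coming from \eqref{heat_harmonic_extension_weak} with $\gamma=1$.
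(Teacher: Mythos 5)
For $\gamma\in(0,1)$ your argument coincides with the paper's proof: form the error equation for $E^{k}=\ve^{k}-V^{k}$ with forcing $-\resto_\gamma^{k+1}$, apply the stability bound \eqref{stab_beta_re_2} of Corollary~\ref{co:stabbeta}, and conclude with the consistency estimate \eqref{consistency} of Proposition~\ref{pro:consistency}; this part is correct and is exactly how \eqref{eq:semi-discreteg} is obtained.

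For $\gamma=1$, however, the paper does not argue by Taylor expansion at all: it obtains \eqref{eq:semi-discrete1} and \eqref{eq:semi-discrete1D} by invoking \cite[Theorems 3.16 and 3.20]{NSV:00} and \cite{Rulla}, i.e.\ the known rates $\tau^{1/2}$ and $\tau$ for backward Euler discretizations of parabolic evolutions, whose proofs exploit the variational (gradient-flow) structure rather than the second time derivative. Your substitutes have genuine gaps. For \eqref{eq:semi-discrete1D}, bounding the residual $\tau^{-1}\int_{I_k}\bigl(\partial_t\ve(t)-\partial_t\ve(t_{k+1})\bigr)\diff t$ by the variation of $\tr\partial_t\ve$ on $I_k$ and summing requires $\partial_t\usf\in BV(0,T;L^2(\Omega))$ with norm controlled by $\|\usf_0\|_{\mathbb{H}^{2s}(\Omega)}+\|f\|_{BV(0,T;L^2(\Omega))}$. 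This is false in general: already for $f\equiv0$ one only has $\|\partial_{tt}\usf(t)\|_{L^2(\Omega)}\lesssim t^{-1}\|\usf_0\|_{\mathbb{H}^{2s}(\Omega)}$, and $\int_0^T\|\partial_{tt}\usf(t)\|_{L^2(\Omega)}\diff t$ can diverge logarithmically for $\usf_0\in\mathbb{H}^{2s}(\Omega)$, so the Taylor/total-variation argument yields at best $\tau|\log\tau|$, not the first-order rate claimed. For \eqref{eq:semi-discrete1}, testing with $2\tau E^{k+1}$ leaves a consistency term built from pointwise values and differences of $\partial_t\ve$ (indeed of $f$ itself, via $f^{k+1}=f(t_{k+1})$), which cannot be controlled by $\|\partial_t\ve\|_{L^2(0,T;\Hsd)}$ alone—no modulus of continuity in time is available under the stated data—so the second suggestion does not close; the data-splitting/interpolation idea could in principle be made to work, but it is only sketched and presupposes the first-order estimate you have not yet established. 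The economical repair is the paper's: observe that \eqref{discrete_beta} with $\gamma=1$ is a backward Euler scheme for a linear parabolic problem (with dynamic boundary condition) in the Gelfand triple $\Hs\subset L^2(\Omega)\subset\Hsd$, and apply the cited results of \cite{NSV:00} and \cite{Rulla} directly.
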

\begin{proof}
Combining \eqref{heat_harmonic_extension_weak} with \eqref{discretization_fractional} and \eqref{L_discrete}, and 
subtracting \eqref{discrete_beta}, the equation for the error
$E^k:= \ve^k - V^k$ reads
\[
(  \delta^\gamma \tr E^{k+1},\tr W)_{L^2(\Omega)} 
+ 
a(E^{k+1},W) = - \langle \resto^{k+1}_{\gamma}, \tr W \rangle.
\]
For $\gamma \in (0,1)$, we apply \eqref{stab_beta_re_2} in conjunction with \eqref{consistency} to derive \eqref{eq:semi-discreteg}. The estimates \eqref{eq:semi-discrete1} and \eqref{eq:semi-discrete1D} follow from \cite[Theorem 3.16]{NSV:00} and \cite[Theorem 3.20]{NSV:00} or \cite{Rulla}, respectively.
\end{proof}

\begin{remark}[error estimates for $\gamma=1$] \rm
\label{rk:gamma1}
Paper \cite{Rulla} shows that under the assumptions $\usf_0 \in \Hs$ and $f \in L^2(0,T;L^2(\Omega))$, the error estimate \eqref{eq:semi-discrete1} is sharp. 
\end{remark}

\section{Space Discretization}
\label{sec:space_discretization}
We now study space discretization of \eqref{heat_harmonic_extension_weak}.
\subsection{Truncation}
\label{sub:truncation}
A first step towards the discretization is to truncate the domain $\C$. Since $\ve(t)$ decays exponentially in the extended direction $y$, for a.e.~$t \in (0,T)$, we truncate $\C$ to $\C_{\Y}=\Omega \times (0,\Y)$ for a suitable $\Y$ and seek solutions in this bounded domain; see \cite[\S3]{NOS}. The next result is an adaptation of \cite[Proposition 3.1]{NOS} and shows the exponential decay of $\ve$. To write such a result, we first define for $\gamma \in (0,1]$
\begin{equation}\label{Lambda}
 \Lambda_{\gamma}^2(\usf_0,f):= I^{1-\gamma}  \| \usf_0 \|^2_{L^2(\Omega)}(T)
+ \|f\|^2_{L^2(0,T;\Hsd)},
\end{equation}
where $I^0$ is the identity according to Remark~\ref{RE:militing_case} (case $\gamma=1$).

\begin{proposition}[exponential decay]
\label{pro:energyYinf}
Given $\gamma \in (0,1]$, and $s \in (0,1)$, we have
\begin{equation}
\label{energyYinf}
\|\nabla \ve\|_{L^2\left( 0,T; L^2(y^{\alpha},\Omega \times (\Y,\infty)) \right) } \lesssim e^{-\sqrt{\lambda_1} \Y/2}
\Lambda_{\gamma}(\usf_0,f),
\end{equation}
where $\Y > 1$ and $\ve$ denotes the solution to \eqref{heat_harmonic_extension_weak}.
\end{proposition}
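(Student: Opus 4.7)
The plan is to combine the spectral representation \eqref{exactforms} of $\ve$ with the identity \eqref{besselenergy} and the energy estimate of Corollary~\ref{CO:energybeta}, so as to reduce the claim to an exponential decay bound for the Bessel-type profiles $\psi_k$.

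First I would exploit the orthonormality of $\{\varphi_k\}$ in $L^2(\Omega)$ together with the eigenvalue problem \eqref{eigenvalue_problem_L} (as in the derivation of \eqref{norm=}) to obtain, for a.e.\ $t\in(0,T)$,
\begin{equation*}
\|\nabla \ve(\cdot,t)\|^2_{L^2(y^\alpha,\Omega\times(\Y,\infty))}
\;\lesssim\;
\sum_{k=1}^{\infty} |\usf_k(t)|^2 \int_{\Y}^{\infty} y^\alpha \bigl(\lambda_k \psi_k(y)^2 + \psi_k'(y)^2\bigr)\diff y.
\end{equation*}
Then, using \eqref{besselenergy} on $(\Y,b)$, letting $b\to\infty$ (so that the boundary term vanishes thanks to the decay of $\psi_k$ and $\psi_k'$), I would rewrite
\begin{equation*}
\int_{\Y}^{\infty} y^\alpha \bigl(\lambda_k \psi_k^2 + (\psi_k')^2\bigr)\diff y
\;=\; -\,\Y^\alpha \psi_k(\Y)\psi_k'(\Y).
\end{equation*}

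The heart of the proof is then a pointwise bound of the form
\begin{equation*}
-\,\Y^\alpha \psi_k(\Y)\psi_k'(\Y) \;\lesssim\; \lambda_k^s \, e^{-\sqrt{\lambda_k}\,\Y},\qquad \Y>1,
\end{equation*}
which follows from the explicit form $\psi_k(y)=e^{-\sqrt{\lambda_k} y}$ when $s=\tfrac{1}{2}$, and more generally from the asymptotic decay of the modified Bessel function $K_s$ (this is precisely the estimate established in the elliptic setting of \cite[Proposition~3.1]{NOS}, which I would invoke). Since $\lambda_k\ge \lambda_1$, I can replace $e^{-\sqrt{\lambda_k}\Y}$ by $e^{-\sqrt{\lambda_1}\Y/2}\cdot e^{-\sqrt{\lambda_k}\Y/2}$ and absorb the latter factor uniformly, obtaining
\begin{equation*}
\|\nabla \ve(\cdot,t)\|^2_{L^2(y^\alpha,\Omega\times(\Y,\infty))}
\;\lesssim\; e^{-\sqrt{\lambda_1}\,\Y}\, \sum_{k=1}^{\infty}\lambda_k^s |\usf_k(t)|^2
\;=\; e^{-\sqrt{\lambda_1}\,\Y}\,\|\usf(\cdot,t)\|_{\Hs}^2.
\end{equation*}

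Finally, integrating in time and invoking the fractional energy estimate \eqref{stab_beta_re2} (together with Remark~\ref{RE:militing_case} for $\gamma=1$) yields
\begin{equation*}
\|\usf\|_{L^2(0,T;\Hs)}^2 \;\lesssim\; I^{1-\gamma}\|\usf_0\|_{L^2(\Omega)}^2(T) + \|f\|_{L^2(0,T;\Hsd)}^2 \;=\; \Lambda_\gamma^2(\usf_0,f),
\end{equation*}
and taking square roots delivers \eqref{energyYinf} with the desired prefactor $e^{-\sqrt{\lambda_1}\Y/2}$. The main obstacle is the pointwise exponential bound on $-\Y^\alpha\psi_k(\Y)\psi_k'(\Y)$ for general $s\in(0,1)$, which is delicate because of the behaviour of $K_s$ near both $0$ and $\infty$; fortunately, this is already available from \cite{NOS} and need not be redone here.
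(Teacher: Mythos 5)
Your proposal is correct and follows essentially the same route as the paper: spectral expansion \eqref{exactforms} plus orthonormality, the identity \eqref{besselenergy} on $(\Y,\infty)$ to reduce to the boundary term $-\Y^\alpha\psi_k(\Y)\psi_k'(\Y)$, the pointwise bound $|\Y^{\alpha}\psi_k(\Y)\psi_k'(\Y)|\lesssim \lambda_k^s e^{-\sqrt{\lambda_k}\Y}$ from \cite{NOS}, and then the energy estimate \eqref{stab_beta_re2} (resp.\ \eqref{stab_beta_1} for $\gamma=1$) with $\Vcal=\Hs$, $\Hcal=L^2(\Omega)$ to control $\|\usf\|_{L^2(0,T;\Hs)}$ by $\Lambda_\gamma(\usf_0,f)$. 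The only cosmetic remark is that the intermediate ``split $e^{-\sqrt{\lambda_k}\Y}=e^{-\sqrt{\lambda_1}\Y/2}e^{-\sqrt{\lambda_k}\Y/2}$'' phrasing is unnecessary (and, read literally, would lose a factor in the exponent); the direct bound $e^{-\sqrt{\lambda_k}\Y}\le e^{-\sqrt{\lambda_1}\Y}$ used in your display is what yields \eqref{energyYinf} after taking square roots, exactly as in the paper.
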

\begin{proof}
Recall from \eqref{exactforms} that $\ve(x,t) =  \sum_{k} \usf_k(t) \varphi_k(x') \psi_k(y)$ solves 
\eqref{heat_harmonic_extension_weak}. Since $\{ \varphi_k\}_{k\in \mathbb N}$ is an orthonormal basis 
of $L^2(\Omega)$ that satisfies \eqref{eigenvalue_problem_L} we have
\begin{align*}
\int_0^T \int_{ \C \setminus \C_{\Y} } y^{\alpha} |\nabla \ve(x,t)|^2 \diff x \diff t
& \lesssim 
\int_0^T \sum_{k=1}^{\infty} \usf_k(t)^2 \int_{\Y}^{\infty} y^{\alpha} \left( \lambda_k \psi_k(y)^2 + \psi_k'(y)^2 \right)
\diff y \diff t
\\
& = \sum_{k=1}^{\infty} \left| \Y^{\alpha} \psi_k(\Y) \psi_k'(\Y) \right| \int_0^T \usf_k(t)^2 \diff t.
\end{align*}
where we used \eqref{besselenergy}. Since 
$ | \Y^{\alpha} \psi_k(\Y) \psi_k'(\Y) | \lesssim \lambda_k^s e^{-\sqrt{\lambda_k}\Y}$ \cite[(2.32)]{NOS}, we deduce
\[
\int_0^T \int_{ \C \setminus \C_{\Y} } y^{\alpha} |\nabla \ve(x,t)|^2 \diff x \diff t
\lesssim e^{-\sqrt{\lambda_1}\Y} \| \usf \|_{L^2(0,T;\Hs)}^2.
\]
Finally, by setting $\Vcal = \Hs$ and $\Hcal = L^2(\Omega)$, the estimate \eqref{energyYinf} follows from either 
\eqref{stab_beta_re2} for $\gamma \in (0,1)$ or \eqref{stab_beta_1} for $\gamma=1$.
\end{proof}

As a consequence of Proposition \ref{pro:energyYinf}, we can consider the truncated problem
\begin{equation}
\label{heat_alpha_extension_truncated}
\begin{dcases}
  -\DIV \left( y^{\alpha} \mathbf{A} \nabla v \right) + y^{\alpha} cv = 0 \ \textrm{in } \C_{\Y}\times(0,T),  
   \quad v = 0 \ \textrm{on } (\partial_L \C_{\Y} \cup \Omega_{\Y}) \times (0,T)
    \\
   d_s \partial_t^{\gamma} \tr v + \partial_{\nu}^{\alpha} v = d_s f \ \textrm{on } (\Omega \times \{ 0\})\times(0,T), \quad
   v = \usf_0 \ \textrm{on }\Omega \times \{ 0\},~t=0,
\end{dcases}
\end{equation}
where $\Omega_{\Y} = \Omega \times \{ \Y \}$ and $\Y \geq 1$ is sufficiently large. We now define
\begin{gather*}
  \HL(y^{\alpha},\C_\Y)  = \left\{ w \in H^1(y^{\alpha},\C_\Y): w = 0 \text{ on }
    \partial_L \C_\Y \cup \Omega_{\Y} \right\},\\
\mathbb{V}_{\Y}  = \big\{ w \in L^2(0,T;\HL(y^{\alpha},\C_{\Y}) ): \partial_t^{\gamma} 
\tr w  \in L^2(0,T;\Hsd)\big\}.
\end{gather*}
Problem \eqref{heat_alpha_extension_truncated} is then understood as follows: seek $v \in \mathbb{V}_{\Y}$ such that,
for a.e.~$t \in (0,T)$,
\begin{equation}
\label{heat_harmonic_extension_weak_truncated}
\langle \partial_t^{\gamma} \tr  v, \tr \phi \rangle + 
a_\Y(v,\phi) = \langle f, \tr \phi \rangle,
\end{equation}
for all $\phi \in \HL(y^{\alpha},\C_{\Y})$ and $\tr v(0) = \usf_0$. Here
\begin{equation}
\label{a_Y}
  a_\Y(w,\phi) =   \frac{1}{d_s}\int_{\C_\Y} {y^{\alpha}\mathbf{A}(x)} \nabla w \cdot \nabla \phi
 + y^{\alpha} c(x') w \phi .
\end{equation}

\begin{remark}[initial datum]
\label{rem:initialdata} \rm
We define $v(0) \in \HL(y^{\alpha},\C_{\Y})$ as the solution
to \eqref{heat_alpha_extension_truncated} with the Neumann condition replaced
by $\tr v = \usf_{0}$. The following estimate holds:
$
 \| v(0) \|_{\HLn(y^{\alpha},\C_{\Y})} \lesssim \| \usf_0 \|_{\Hs} 
$
\cite[Remark 3.4]{NOS}.
\end{remark}

\begin{lemma}[exponential convergence]
\label{LE:exp_convergence}
For every $\gamma \in (0,1]$ and $\Y \geq 1$, we have
\begin{equation}
\label{exp_convergence}
I^{1-\gamma} \| \tr(\ve-v)\|^2_{L^2(\Omega) }(T)  + 
 \| \nabla(\ve-v)\|^2_{L^2(0,T;L^2(y^\alpha, \C_{\Y}) )} 
\lesssim e^{-\sqrt{\lambda_1} \Y} \Lambda_{\gamma}^2(\usf_0,f),
\end{equation}
where $\ve$ solves \eqref{heat_harmonic_extension_weak}, $v$ solves 
\eqref{heat_alpha_extension_truncated} and
$\Lambda_{\gamma}(\usf,f)$ is defined in \eqref{Lambda}.
 \end{lemma}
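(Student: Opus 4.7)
Set $w := \ve - v$. Because any $\phi \in \HL(y^\alpha,\C_\Y)$ vanishes on $\Omega_\Y \cup \partial_L\C_\Y$, its extension by zero to $\C$ lies in $\HL(y^\alpha,\C)$. Testing \eqref{heat_harmonic_extension_weak} against such extensions and subtracting \eqref{heat_harmonic_extension_weak_truncated} yields the error equation
\begin{equation*}
\langle \partial_t^\gamma \tr w, \tr \phi\rangle + a_\Y(w, \phi) = 0 \quad \forall \phi \in \HL(y^\alpha,\C_\Y), \qquad \tr w(0) = 0.
\end{equation*}
Since $w|_{\Omega_\Y} = \ve|_{\Omega_\Y} \neq 0$ in general, $w \notin \HL(y^\alpha,\C_\Y)$ and cannot be used directly as a test function, so the energy estimates of Section~\ref{sec:time_discretization} do not apply verbatim.

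To overcome this, I will lift the boundary data at $y=\Y$. Fix $\eta \in C^\infty([0,\Y])$ with $\eta(y) = 0$ for $y \le \Y-1$, $\eta(\Y) = 1$, and $\|\eta'\|_{L^\infty}\lesssim 1$, and set $\psi := \eta\,\ve$. Then $\psi \in H^1(y^\alpha,\C_\Y)$, $\tr\psi = 0$ (as $\eta(0)=0$ since $\Y\geq 1$), and $\psi|_{\Omega_\Y} = \ve|_{\Omega_\Y}$, so $w_0 := w - \psi$ lies in $\HL(y^\alpha,\C_\Y)$ and satisfies
\begin{equation*}
\langle \partial_t^\gamma \tr w_0, \tr \phi\rangle + a_\Y(w_0, \phi) = -a_\Y(\psi, \phi) \quad \forall \phi \in \HL(y^\alpha,\C_\Y),\qquad \tr w_0(0) = 0.
\end{equation*}
Taking $\phi = w_0$ (now admissible), applying Cauchy-Schwarz to the right-hand side, invoking Remark~\ref{rem:equivalent} for the norm equivalence, and reproducing the fractional energy argument behind Theorem~\ref{TH:stabbeta}/Corollary~\ref{CO:energybeta} for $\gamma\in(0,1)$ (via discrete-to-continuous passage) or Lemma~\ref{le:stab_1} for $\gamma=1$, one obtains
\begin{equation*}
I^{1-\gamma}\|\tr w_0\|^2_{L^2(\Omega)}(T) + \|w_0\|^2_{L^2(0,T;\HLn(y^\alpha,\C_\Y))} \lesssim \|\psi\|^2_{L^2(0,T;\HLn(y^\alpha,\C_\Y))}.
\end{equation*}

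Because $\eta$ is supported in $[\Y-1,\Y]$ with bounded derivative,
\begin{equation*}
\|\psi(\cdot,t)\|^2_{\HLn(y^\alpha,\C_\Y)} \lesssim \|\nabla\ve(\cdot,t)\|^2_{L^2(y^\alpha,\Omega\times(\Y-1,\infty))} + \|\ve(\cdot,t)\|^2_{L^2(y^\alpha,\Omega\times(\Y-1,\infty))}.
\end{equation*}
The gradient piece is controlled by Proposition~\ref{pro:energyYinf} applied at $\Y-1$; for the $L^2$ piece, the spectral representation \eqref{exactforms}, the bound $\int_{\Y-1}^\infty y^\alpha \psi_k(y)^2\,\diff y \lesssim \lambda_k^{-1/2}e^{-2\sqrt{\lambda_k}(\Y-1)}$ (argued as in Proposition~\ref{pro:energyYinf}), and $\lambda_k \geq \lambda_1 >0$ combined with the energy estimate of Corollary~\ref{CO:energybeta} yield the analogous bound. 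Both are $\lesssim e^{-\sqrt{\lambda_1}\Y}\Lambda_\gamma^2(\usf_0,f)$, absorbing the constant $e^{\sqrt{\lambda_1}}$ into the implicit constant. Since $\tr\psi = 0$ gives $\tr w = \tr w_0$, and $\|\nabla w\|_{L^2(y^\alpha,\C_\Y)} \leq \|\nabla w_0\| + \|\nabla\psi\|$, the bound \eqref{exp_convergence} follows. The main technical obstacle is justifying the fractional energy identity for $w_0$: the bilinear form $a_\Y$ acts on $\HL(y^\alpha,\C_\Y)$ while $\partial_t^\gamma$ only sees the trace, so the abstract Gelfand-triple setting of Corollary~\ref{CO:energybeta} does not apply verbatim. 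This is handled by time-discretizing the error equation along the lines of \eqref{discrete_beta}, emulating the argument of Theorem~\ref{TH:stabbeta} in the truncated extension setting, and letting $\tau\downarrow 0$.
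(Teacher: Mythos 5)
Your proposal is correct, but it takes a genuinely different decomposition from the paper's. The paper corrects $\ve$ by subtracting its slice at $y=\Y$, i.e.\ it sets $w=\ve-\ve(\cdot,\Y,\cdot)\in \HL(y^\alpha,\C_\Y)$ and derives an equation for $e=v-w$ whose right-hand side contains \emph{three} contributions that must be estimated spectrally: the bilinear term $a_\Y(\ve(\cdot,\Y,\cdot),\phi)$, the term $\langle \tr\partial_t^\gamma \ve(\cdot,\Y,\cdot),\tr\phi\rangle$ (handled by converting $\partial_t^\gamma \usf_k$ into $-\lambda_k^s\usf_k+f_k$ via \eqref{uk}), and the nonzero initial trace $e(\cdot,0)=\ve(\cdot,\Y,0)$; it then needs a final bound on $\ve(\cdot,\Y,\cdot)$ itself to pass from $e$ to $\ve-v$. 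You instead lift the mismatch at $y=\Y$ by the cutoff $\psi=\eta\,\ve$ supported in $\Omega\times(\Y-1,\Y)$; since $\tr\psi=0$, the corrected error $w_0$ has homogeneous initial trace and the \emph{only} forcing is $-a_\Y(\psi,\cdot)$, bounded by the tail energy of $\ve$ on $\Omega\times(\Y-1,\infty)$, which Proposition~\ref{pro:energyYinf} (applied at $\Y-1$, at the harmless cost of a factor $e^{\sqrt{\lambda_1}}$) already controls. This buys a shorter argument with no fractional-time-derivative term and no initial-data term, at the price of a strip estimate for $\|\ve\|_{L^2(y^\alpha,\Omega\times(\Y-1,\Y))}$; your stated intermediate bound $\int_{\Y-1}^\infty y^\alpha\psi_k^2\,\diff y\lesssim \lambda_k^{-1/2}e^{-2\sqrt{\lambda_k}(\Y-1)}$ has the wrong power of $\lambda_k$ (the weight contributes $\lambda_k^{s}$ times $\int y e^{-2\sqrt{\lambda_k}y}\diff y$, and the asymptotics of $\psi_k$ are only valid away from $y=0$, relevant when $\Y$ is close to $1$), but this is immaterial: the lower-order piece can be absorbed into $\|\usf\|_{L^2(0,T;\Hs)}$ anyway, or avoided altogether by using the Poincar\'e inequality in $x'$ (since $\ve$ vanishes on $\partial_L\C$) to control the $L^2(y^\alpha)$ piece by the gradient piece. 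Finally, both your proof and the paper's rest on the same slightly informal step: applying the energy estimate of Theorem~\ref{TH:stabbeta}/Corollary~\ref{CO:energybeta} to an equation whose forcing is a functional on $\HL(y^\alpha,\C_\Y)$ rather than on the trace space $\Hs$; your closing remark about time-discretizing, absorbing the forcing via Young's inequality into the $a_\Y$-term, and letting $\tau\downarrow0$ is exactly the right repair and is no less rigorous than what the paper does.
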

\begin{proof}
Let 
$
w(x,t) := \ve(x',y,t) - \ve(x',\Y,t) \in \HL(y^{\alpha},\C_{\Y}) 
$
be a modification of $\ve$ with vanishing trace at $y = \Y$. 
We observe that $w$ satisfies
\begin{multline*}
\langle \tr  \partial_t^{\gamma} w, \tr \phi \rangle + 
a_\Y(w,\phi) = \langle f, \tr \phi \rangle 
- \langle \tr  \partial_t^{\gamma} \ve(\cdot,\Y,\cdot), \tr \phi \rangle
-a_\Y(\ve(\cdot,\Y,\cdot),\phi)
\end{multline*}
for all $\phi \in \HL(y^{\alpha},\C_{\Y})$. Therefore, the error $e:= v - w$ satisfies
\[
 \langle \tr  \partial_t^{\gamma} e, \tr \phi \rangle + 
a_\Y(e,\phi) = 
a_\Y(\ve(\cdot,\Y,\cdot),\phi)  
+ \langle \tr  \partial_t^{\gamma} \ve(\cdot,\Y,\cdot), \tr \phi \rangle.
\]
Setting $\Vcal = \Hs$ and $\Hcal = L^2(\Omega)$, the assertion is a consequence of Corollary~\ref{CO:energybeta} for $\gamma < 1$ and Remark~\ref{RE:militing_case} for $\gamma = 1$, provided we can estimate the right-hand side of the previous expression and $e(\cdot,0) = \ve(\cdot,\Y,0)$. We estimate the three terms in question separately using Proposition \ref{pro:energyYinf} and the representation formula \eqref{exactforms}.

We note first that
$
| a_\Y(\ve(\cdot,\Y,\cdot),\phi) | \lesssim \|  \ve(\cdot,\Y,\cdot) \|_{\HLn(y^{\alpha},\C_{\Y})}
\|  \phi \|_{\HLn(y^{\alpha},\C_{\Y})}
$
and
\[
 \|  \nabla \ve (\cdot,\Y,\cdot) \|^2_{L^2(y^{\alpha},\C_{\Y})} = \frac{1}{\alpha + 1}
\sum_{k=1}^\infty \lambda_k \usf_k^2(t) \Y^{1+\alpha} \psi_k^2(\Y).
\]
Now, since $| \psi_k(y) | \lesssim (\sqrt{\lambda_k}y)^s e^{-\sqrt{\lambda_k}y}$
for $y \geq 1$, we easily see that
\begin{align*}
\|  \nabla \ve (\cdot,\Y,\cdot) \|^2_{L^2(0,T;L^2(y^{\alpha},\C_{\Y}))}  
& \lesssim \Y^{2(1-s)} \sum_{k=1}^\infty \lambda_k \int_0^T \usf_k^2(t)\diff t (\sqrt{\lambda_k}\Y)^{2s} e^{-2\sqrt{\lambda_k}\Y} \\
& \lesssim  e^{-\sqrt{\lambda_1}\Y} \sum_{k=1}^\infty \!  \lambda_k^{s} \int_0^T \!\! \usf_k^2(t) \diff t 
 =  e^{-\sqrt{\lambda_1}\Y} \| \usf \|^2_{L^2(0,T;\mathbb{H}^{s}(\Omega))}.
\end{align*}
For the second term, we have $\partial_t^{\gamma} \ve (\cdot,\Y,t) =
 \sum_{k=1}^\infty \partial_t^{\gamma} \usf_k(t) \varphi_k \psi_k(\Y)$, whence
\begin{align*}
\| \partial_t^{\gamma} \ve (\cdot,\Y,t) \|_{\Hsd}^{2} 
= \sum_{k=1}^\infty | \partial_t^{\gamma} \usf_k(t)|^2 \lambda_k^{-s} |\psi_k(\Y)|^2
\lesssim e^{- \sqrt{\lambda_1}\Y} \sum_{k=1}^\infty | \partial_t^{\gamma} \usf_k(t)|^2 \lambda_k^{-s}.
\end{align*}
On the other hand, in light of \eqref{uk}, we deduce
\[
 \sum_{k=1}^\infty | \partial_t^{\gamma} \usf_k(t)|^2 \lambda_k^{-s} \lesssim 
 \sum_{k=1}^\infty  \left( \usf^2_k(t) \lambda_k^{s} + f_k^2(t) \lambda_{k}^{-s} \right) = \| \usf(t)\|_{\Hs}^2
 + \| f(t)\|_{\Hsd}^2.
\]
Finally,
$
\|   \ve (\cdot,\Y,0) \|^2_{L^2(\Omega)} = \sum_{k=1}^\infty \usf_k^2(0) \psi_k^2(\Y) \lesssim e^{-\sqrt{\lambda_1}\Y} \| \usf_0 \|_{L^2(\Omega)}^{2}.
$
Collecting the previous estimates and invoking the stability bounds \eqref{stab_beta_re2}
and \eqref{stab_beta_1} for $\usf$, we deduce
\begin{equation}
\label{e_exp}
 I^{1-\gamma} \| \tr e\|^2_{L^2(\Omega) }(T)  + 
 \| \nabla e\|^2_{L^2(0,T;L^2(y^{\alpha},\C_{\Y}) )} 
 \lesssim e^{-\sqrt{\lambda_1}\Y} 
 \Lambda_{\gamma}^2(\usf_0,f).
\end{equation}
Moreover, we have
\begin{equation*}
I^{1-\gamma} \| \tr \ve(\cdot,\Y,\cdot,)\|^2_{L^2(\Omega) }(T)  \! + 
\| \ve(\cdot,\Y,\cdot)\|^2_{L^2(0,T;\HLn(y^\alpha,\C_{\Y}) )}  
\lesssim e^{-\sqrt{\lambda_1}\Y} \Lambda_{\gamma}^2(\usf_0,f),
\end{equation*}
which together with \eqref{e_exp} implies the desired estimate \eqref{exp_convergence}.
\end{proof}

As in \S\ref{sec:time_discretization}, we consider a semi-discrete approximation
of \eqref{heat_harmonic_extension_weak_truncated}. Given the initialization
$\tr \Vt^{0} = \usf_0$, for $k=0,\dots, \K-1$, $\Vt^{k+1} \in \HL(y^{\alpha},\C)$ solve
\begin{equation}
\label{discrete_beta_truncated}
(  \delta^\gamma \tr \Vt^{k+1},\tr W)_{L^2(\Omega)} +
a_\Y(\Vt^{k+1},W) = \langle f^{k+1},W \rangle, \quad
\forall W \in\HL(y^\alpha,\C_{\Y}).
\end{equation}
Its stability follows from Lemma~\ref{le:stab_1} ($\gamma = 1$) and 
Theorem~\ref{TH:stabbeta} ($\gamma <1$). We can also prove estimates like those of Theorem~\ref{thm:semidicserr}.
We conclude with the following remark.

\begin{remark}[regularity of $v$ vs.~$\ve$] \rm
\label{rem:reg_of_v}
In \S\ref{sec:fully_scheme} we will approximate $v$, solution to problem \eqref{heat_alpha_extension_truncated}, so it is essential to elucidate its regularity. Separation of variables yields $v(x',y,t) = \sum_{k} v_k(t) \varphi_k(x') \chi_k(y)$, where $\varphi_k$ solves \eqref{eigenvalue_problem_L} and $\chi_k$ solves 
\begin{equation}
\label{chik}
\chi_k'' + \alpha y^{-1} \chi_k' - \lambda_k \chi_k = 0, \quad \chi_k(0) = 1, \quad \chi_k(\Y)= 0.
\end{equation}
Let $I_s$ and $K_s$ be the modified Bessel functions of first and second kind \cite[\S 9.6]{Abra}, then 
\[
  \chi_k(y) = \Big( \sqrt{\lambda_k} y \Big)^s \Big( a_{k,s} K_s( \sqrt{\lambda_k} y ) + b_{k,s} I_s( \sqrt{\lambda_k} y ) \Big) =: \chi_{k,1}(y) + \chi_{k,2}(y).
\] 
To understand the behavior of $\chi_k$, we present the following properties of $I_s$ \cite{Abra}: 
\begin{enumerate}[(a)]
 
 \item\label{1} For $\nu \in \mathbb{R}$, $\lim_{z \downarrow0} 2^{\nu} \Gamma(\nu+1) z^{-\nu}I_\nu(z) = 1$ (see \cite[(9.6.7)]{Abra}).
 
 \item\label{2} For $\nu \in \mathbb{R}$ and $k \in \mathbb{N}$, $ (z^{-1} \diff_z)^k(z^{\nu} I_\nu (z) ) = z^{\nu-k}I_{\nu-k}(z)$ (see \cite[(9.6.28)]{Abra}).  
 
 \item\label{3} For $z \geq 1$, the function $I_{\nu}(z)$ increases as $e^{z}/{\sqrt{2 \pi z}}$ (see \cite[(9.7.1)]{Abra}).
\end{enumerate}
Property \eqref{1} yields $I_{s}(0) = 0$, which together with $\chi_k(0) = 1$ imply $\chi_{k,1} \equiv \psi_k$ and $a_{k,s} = c_s = 2^{1-s}/\Gamma(s)$, where $\psi_k$ solves \eqref{psik}. Since $\chi_k(\Y) = 0$ we obtain
\[
  b_{k,s} = - c_s K_s(\sqrt{\lambda_k} \Y)I_s(\sqrt{\lambda_k} \Y)^{-1}, 
\]
and thus $\chi_{k,2}$. From \eqref{3} and \cite[(v)]{NOS} we have that $\{ b_{k,s}\}_{k \in \mathbb{N}}$ converges exponentially to $0$ as $k \uparrow \infty$, and in particular it is bounded. Now \eqref{asym-psi_k-p}, \eqref{1} and \eqref{2}, with $k=1$, imply that $\lim_{y \downarrow0} y^{\alpha} \chi_k'(y) = \lambda_k^s(e_{k,s}-d_s) $, where $d_s=2^{\alpha}\Gamma(s)/\Gamma(1-s)$ and $e_{k,s} = 2^{1-s} b_{k,s}/\Gamma(s)$. 
This, together with the fact that $\chi_k(y)$ solves \eqref{chik}, yield $\int_{0}^{\Y} y^{\alpha} (\lambda_k \chi_k(y)^2 + \chi_k'(y)^2) \diff y \lesssim \lambda_k^s(e_{k,s}-d_s)$. With these properties, and the fact that $b_{k,s}$ converges exponentially to $0$ as $k \uparrow \infty$, we arrive at
\begin{equation}
\label{key}
 \int_{0+} y^{\beta}|\chi_k''(y)|^2\diff y \lesssim \lambda_k^{3/2-\beta/2} \leq \lambda_k^{2s}, \qquad 
 \mathcal{D}(v(\cdot,t)) \lesssim \sum_{k=1}^{\infty} \lambda_k^{1+s} |v_k(t)|^2,
\end{equation}
where $\mathcal{D}$ is defined right before \eqref{D}. From \eqref{eq:aux_key} $v_k$ solves:
\[
\partial^{\gamma}_t v_k(t) + \lambda_k^s(1+ \tfrac{e_{k,s}}{d_s}) v_k(t) = f_k(t), \ t>0, \qquad v_k(0) = \usf_{0,k},
\]
Estimates \eqref{key} and the exponential convergence of $\{ e_{k,s} \}_{k\in \mathbb{N}}$ allow us to conclude that the regularity of Theorems \ref{TH:regularity} and \ref{TH:regularity_in_time} also holds for $v$.
\end{remark}

\subsection{Finite element methods}
\label{sec:sub:space_discretization}

We follow \cite[\S4]{NOS} and let $\partial\Omega$ be polyhedral. Let $\T_{\Omega} = \{K\}$ be a conforming mesh of $\Omega$ into cells $K$ (simplices or $n$-rectangles):
\[
  \bar\Omega = \bigcup_{K \in \T_{\Omega}} K, \qquad
  |\Omega| = \sum_{K \in \T_{\Omega}} |K|.
\]
Let $\Tr_\Omega$ be a collection of conforming shape-regular refinements $\T_{\Omega}$ of an original mesh $\T_{\Omega}^0$ \cite{CiarletBook}. If $\T_\Omega \in \Tr_\Omega$ we define $h_{\T_{\Omega}}= \max_{K \in \T_\Omega}h_K$. 

We define $\T_\Y$ to be a partition of $\C_\Y$ into cells of the form $T = K \times I$, where $K \in \T_\Omega$,
and $I$ is an interval. We consider the partition $\{ y_k\}_{k=0}^{M}$ of the interval $[0,\Y]$
\begin{equation}
\label{graded_mesh}
  y_k = \left( \frac{k}{M}\right)^{\mu} \Y, \quad k=0,\dots,M,
\end{equation}
where $\mu = \mu(\alpha) > 3/(1-\alpha)> 1$. Notice that each discretization of the truncated cylinder $\C_{\Y}$ depends on the truncation parameter $\Y$. The set of all such triangulations $\T_{\Y}$ is denoted by $\Tr$. In addition, if the partitions in the extended direction are given by \eqref{graded_mesh}, the following weak regularity condition is valid: there is a constant $\sigma$ such that, for all $\T_\Y \in \Tr$, if $T_1=K_1\times I_1,T_2=K_2\times I_2 \in \T_\Y$
have nonempty intersection, then
$
   h_{I_1}/h_{I_2} \leq \sigma,
$
where $h_I = |I|$; see \cite{DL:05,NOS}.

The main motivation to consider elements as in \eqref{graded_mesh} is to compensate the rather singular behavior of $\ve$, solution to problem \eqref{heat_harmonic_extension_weak}, as $y \downarrow0$. It is known that the numerical approximation of functions with a strong directional-dependent behavior needs anisotropic elements in order to recover
quasi-optimal error estimates \cite{DL:05,NOS2}. In our setting, anisotropic elements of tensor product structure are essential.

Given $\T_\Y$, we call $\N(\T_{\Y})$ the set of its nodes and $\Nin(\T_{\Y})$ the set of its interior and Neumann 
nodes. We denote by $N = \# \Nin(\T_{\Y})$ the number of degrees of freedom of $\T_\Y$. In what follows we assume that $\#\T_\Omega \approx M^n$ so that $N\approx M^{n+1}$. For each vertex $\vero \in \N$, we write $\vero= (\vero',\vero'')$, where $\vero'$ corresponds to a node of $\T_\Omega$, and $\vero''$ corresponds to a node of the partition of $[0,\Y]$. We define $h_{\vero'}= \min\{h_K: \vero' \textrm{ is a vertex of } K\}$, and $h_{\vero''} = \min\{h_I: \vero'' \textrm{ is a vertex of } I\}$. Given $\vero \in \Nin(\T_{\Y})$, we define the \emph{star} 
$
  S_{\vero} :=\bigcup_{T \ni \vero} T,
$
and for $T \in \T_{\Y}$ we set
$
  S_T := \bigcup_{\vero \in T} S_\vero.
$
For $\T_{\Y} \in \Tr$, we define 
\[
  \V(\T_\Y) := \left\{
            W \in C^0( \bar{\C}_\Y): W|_T \in \mathcal{P}_1(K)\otimes \mathbb{P}_1(I) \ \forall T = K \times I \in \T_\Y, \
            W|_{\Gamma_D} = 0
          \right\},
\]
where $\Gamma_D = \partial_L \C_{\Y} \cup \Omega \times \{ \Y\}$ is called the Dirichlet boundary. If $K$ is a simplex, then $\mathcal{P}(T)=\mathbb{P}_1(K)$, whereas if $K$ is a $n$-rectangle, then $\mathcal{P}(T)=\mathbb{Q}_1(K)$.
We also define $\U(\T_{\Omega}):=\tr \V(\T_{\Y})$, \ie a $\mathcal{P}_1$ finite element space over the mesh $\T_\Omega$.

The graded meshes described by \eqref{graded_mesh} yield near optimal error estimates both in regularity and order
for the elliptic case investigated in \cite{NOS}.

\subsection{Weighted elliptic projector: definition}
\label{sec:sub:elliptic_projector}

In this subsection, we define a \emph{weighted elliptic projector}, which is fundamental in \S~\ref{sec:fully_scheme}.
This projector is the operator
$
  G_{\T_{\Y}}: \HL(y^{\alpha},\C_\Y) \rightarrow \V(\T_\Y)
$
such that, for $w \in \HL(y^\alpha,\C_\Y)$, is given by
\begin{equation}
 \label{elliptic_projection}
a_\Y \left( G_{\T_{\Y}}w , W \right)  = a_\Y(w, W), \quad \forall W \in \V(\T_{\Y}).
\end{equation}

To easily describe the properties of the weighted elliptic projection operator $G_{\T_{\Y}}$ we introduce the 
mesh-size functions $h', h'' \in L^{\infty}(\C_{\Y})$ given by
\[
 h'_{|T} = h_K, \quad h''_{|T} = h_{I} \quad \forall T = K \times I \in \T_{\Y}.
\]
The operator $G_{\T_{\Y}}$ satisfies the following stability and approximation properties.
\begin{proposition}[weighted elliptic projector]
\label{pro:elliptic_P}
The weighted elliptic projector $G_{\T_{\Y}}$ is stable in $\HL(y^{\alpha},\C_\Y)$, \ie
\begin{equation}
 \label{P-stable}
\| \nabla G_{\T_{\Y}} w \|_{L^2(y^{\alpha},\C_\Y)} \lesssim \| \nabla w\|_{L^2(y^{\alpha},\C_\Y)}, \qquad
\forall w \in \HL(y^{\alpha},\C_\Y).
\end{equation}
If, in addition, $w \in H^2(y^{\alpha},\C_\Y)$, then $G_{\T_{\Y}}$ has the following approximation property
\begin{equation}
 \label{P-approx}
\| \nabla ( w - G_{\T_{\Y}} w) \|_{L^2(y^\alpha,\C_\Y)} \lesssim 
\| h' \nabla_{x'}  \nabla w\|_{L^2(y^{\alpha},\C_\Y)}
+
\| h'' \partial_{y}  \nabla w\|_{L^2(y^{\alpha},\C_\Y)}.
\end{equation}
\end{proposition}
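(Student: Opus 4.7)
The plan is to deduce both statements from the definition \eqref{elliptic_projection} and the equivalence of norms on $\HL(y^\alpha,\C_\Y)$ stated in Remark~\ref{rem:equivalent}, combined with an anisotropic weighted quasi-interpolation estimate already available in \cite{NOS}.

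First, I would prove the stability bound \eqref{P-stable} by testing \eqref{elliptic_projection} with $W = G_{\T_\Y} w \in \V(\T_\Y)$. By Remark~\ref{rem:equivalent}, $a_\Y(\cdot,\cdot)^{1/2}$ is an equivalent norm to $|\cdot|_{H^1(y^\alpha,\C_\Y)}$, so coercivity and continuity of $a_\Y$ on $\HL(y^\alpha,\C_\Y)$ yield
\[
 \|\nabla G_{\T_\Y} w\|_{L^2(y^\alpha,\C_\Y)}^2 \lesssim a_\Y(G_{\T_\Y} w,G_{\T_\Y}w) = a_\Y(w,G_{\T_\Y}w) \lesssim \|\nabla w\|_{L^2(y^\alpha,\C_\Y)} \|\nabla G_{\T_\Y} w\|_{L^2(y^\alpha,\C_\Y)},
\]
and dividing through delivers \eqref{P-stable}.

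Second, for the approximation estimate \eqref{P-approx}, Galerkin orthogonality inherent in \eqref{elliptic_projection} gives a C\'ea-type best-approximation property in the energy norm:
\[
 \|\nabla(w - G_{\T_\Y} w)\|_{L^2(y^\alpha,\C_\Y)} \lesssim \inf_{W \in \V(\T_\Y)} \|\nabla(w - W)\|_{L^2(y^\alpha,\C_\Y)}.
\]
Choosing $W = \Pi_{\T_\Y} w$, where $\Pi_{\T_\Y}$ is the anisotropic weighted quasi-interpolant constructed in \cite{NOS} for tensor-product meshes satisfying the weak regularity condition associated with \eqref{graded_mesh}, reduces the proof to citing the local anisotropic estimates
\[
 \|\nabla_{x'}(w - \Pi_{\T_\Y} w)\|_{L^2(y^\alpha,T)} \lesssim h_K \|\nabla_{x'}\nabla w\|_{L^2(y^\alpha,S_T)},
 \qquad
 \|\partial_y(w - \Pi_{\T_\Y} w)\|_{L^2(y^\alpha,T)} \lesssim h_I \|\partial_y\nabla w\|_{L^2(y^\alpha,S_T)},
\]
valid for $w \in H^2(y^\alpha,\C_\Y)$, and then summing over $T = K\times I \in \T_\Y$ using the finite overlap of the patches $S_T$.

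The only non-routine ingredient is the anisotropic weighted interpolation estimate on tensor-product elements with a Muckenhoupt $A_2$ weight; this is precisely what is developed in \cite{NOS} and thus can be invoked directly here. Consequently the two bounds \eqref{P-stable} and \eqref{P-approx} follow with essentially no new calculations beyond combining these ingredients.
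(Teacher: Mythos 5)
Your proposal is correct and follows essentially the same route as the paper: stability by testing \eqref{elliptic_projection} with $W = G_{\T_\Y} w$ together with the norm equivalence of Remark~\ref{rem:equivalent}, and the approximation bound via Galerkin orthogonality combined with the anisotropic weighted interpolation estimates for $\Pi_{\T_\Y}$ from \cite{NOS}. The only cosmetic difference is that you pass through a C\'ea-type best-approximation statement before inserting the interpolant, whereas the paper bounds $a_\Y(w - G_{\T_\Y}w,\, w - \Pi_{\T_\Y}w)$ directly, which is the same argument.
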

\begin{proof}
To show stability set $W = G_{\T_\Y} w$ in \eqref{elliptic_projection}, use Cauchy-Schwarz inequality and the equivalence of $a_\Y(w,w)$ with $\| \nabla w\|^2_{L^2(y^{\alpha},\C_{\Y})}$ (see Remark~\ref{rem:equivalent}).

Obtaining the estimate \eqref{P-approx} hinges on Galerkin orthogonality, which yields
\begin{align*}
\| \nabla ( w - G_{\T_{\Y}} w) \|^2_{L^2(y^{\alpha},\C_\Y)}  
& \lesssim a_\Y \left( w - G_{\T_{\Y}} w,  w - \Pi_{\T_{\Y}} w \right)
\end{align*}
where $\Pi_{\T_{\Y}} $ is the interpolation operator defined in \cite{NOS}. The assertion then
follows from the anisotropic interpolation estimates of \cite[Theorems 4.7 and 4.8]{NOS}.
\end{proof}

\begin{lemma}[error estimates: elliptic projector]
\label{le:P_optimal}
If $w \in \HL(y^{\alpha},\C_\Y)$, $\mathcal{S}(w) < \infty$ and the mesh $\T_\Y$ is graded as in \eqref{graded_mesh}, then we have
\begin{equation}
\label{P_optimal}
\| \nabla ( w - G_{\T_{\Y}} w) \|_{L^2(y^\alpha,\C_\Y)} 
+ \| \tr ( w - G_{\T_{\Y}} w)  \|_{\Hs} \lesssim
 |\log N |^s N^{-1/(n+1)} \mathcal{S}(w),
\end{equation}
where $N = \# \T_{\Y}$ and $\mathcal{S}(w)$ is defined in \eqref{C(w)}.
\end{lemma}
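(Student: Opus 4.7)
The strategy is a standard quasi-Galerkin argument combined with the anisotropic interpolation theory on graded tensor product meshes developed in \cite{NOS}.

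The starting point is the approximation estimate \eqref{P-approx} of Proposition~\ref{pro:elliptic_P}, which reduces the task to bounding the weighted anisotropic seminorm
\[
\Theta(w) := \| h' \nabla_{x'}\nabla w\|_{L^2(y^\alpha,\C_\Y)} + \| h''\partial_y \nabla w\|_{L^2(y^\alpha,\C_\Y)}.
\]
The first summand is easy: shape regularity of $\T_\Omega$ gives $\|h'\|_{L^\infty(\C_\Y)} \lesssim h_{\T_\Omega} \approx M^{-1}$, and $\|\nabla_{x'}\nabla w\|_{L^2(y^\alpha,\C_\Y)} \leq \mathcal{S}(w)$ by definition \eqref{C(w)}, so this summand is controlled by $M^{-1}\mathcal{S}(w)$.

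For the second summand I would split $\partial_y \nabla w = (\partial_y \nabla_{x'} w, \partial_{yy} w)$. The mixed-derivative piece $\|h''\partial_y \nabla_{x'} w\|_{L^2(y^\alpha,\C_\Y)}$ is again bounded by the mesh grading: from \eqref{graded_mesh}, $\max_k h_{I_k} \lesssim \mu\Y/M$, and $\partial_y\nabla_{x'}w \in L^2(y^\alpha,\C)$ is part of $\mathcal{S}(w)$, yielding a bound $\lesssim \Y M^{-1}\mathcal{S}(w)$. The genuinely delicate piece, and the main obstacle, is $\|h''\partial_{yy}w\|_{L^2(y^\alpha,\C_\Y)}$, because $\partial_{yy}w$ is only controlled in the \emph{heavier} weight $L^2(y^\beta,\C)$ with $\beta>2\alpha+1$. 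Here the graded mesh is essential: on each slab $K\times(y_k,y_{k+1})$ with $y_k=(k/M)^\mu\Y$ and $\mu>3/(1-\alpha)$, an elementwise comparison shows $h_{I_k}^2 y^\alpha \lesssim y^\beta \cdot M^{-2}$ (with a constant independent of $k$ and $\Y$ once the first interval is treated separately via the weight change $\beta>2\alpha+1$). This is precisely the content of Theorems~4.7--4.8 of \cite{NOS}; summing over elements yields
\[
\| h''\partial_{yy} w\|_{L^2(y^\alpha,\C_\Y)} \lesssim M^{-1}\|\partial_{yy} w\|_{L^2(y^\beta,\C_\Y)} \lesssim M^{-1}\mathcal{S}(w).
\]

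Collecting the three contributions and using $N\approx M^{n+1}$ gives $\Theta(w) \lesssim (1+\Y) N^{-1/(n+1)} \mathcal{S}(w)$. Combining the projector analysis with the natural choice of truncation $\Y \asymp |\log N|$ (dictated by the exponential decay in Lemma~\ref{LE:exp_convergence}), and absorbing the logarithmic factor sharply by means of the weighted Sobolev embedding used in \cite[\S5]{NOS} (which is where the exponent $s$ on $|\log N|$ arises), I obtain
\[
\|\nabla(w-G_{\T_\Y}w)\|_{L^2(y^\alpha,\C_\Y)} \lesssim |\log N|^s N^{-1/(n+1)}\mathcal{S}(w).
\]
The trace term in \eqref{P_optimal} is then immediate from the trace inequality \eqref{Trace_estimate} applied to $w - G_{\T_\Y}w \in \HL(y^\alpha,\C_\Y)$, which gives $\|\tr(w-G_{\T_\Y}w)\|_{\Hs} \lesssim \|\nabla(w-G_{\T_\Y}w)\|_{L^2(y^\alpha,\C_\Y)}$ in view of Remark~\ref{rem:equivalent} and the weighted Poincar\'e inequality \eqref{Poincare_ineq}. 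The hard step, again, is the weight-mismatch handling for $\partial_{yy}w$, which is the whole reason the mesh \eqref{graded_mesh} must be graded with the specific exponent $\mu>3/(1-\alpha)$.
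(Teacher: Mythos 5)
Your overall strategy coincides with the paper's: the paper proves \eqref{P_optimal} by combining the approximation property \eqref{P-approx} with the anisotropic interpolation theory on the graded meshes \eqref{graded_mesh} of \cite{NOS} (it simply cites \cite[Theorem 5.4]{NOS} for the rate $|\log N|^s N^{-1/(n+1)}$), and it bounds the trace term exactly as you do, via \eqref{Trace_estimate}. The only problems are in the portion of your write-up that attempts to reconstruct the cited interpolation bound, and there are two of them. First, the elementwise comparison $h_{I}^2\, y^\alpha \lesssim M^{-2} y^{\beta}$ cannot hold on the element touching $y=0$: since $\beta>2\alpha+1>\alpha$, one has $y^{\alpha-\beta}\to\infty$ as $y\downarrow 0$, so no ``separate treatment of the first interval'' can rescue this pointwise weight comparison. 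In \cite{NOS} the elements abutting $\Omega\times\{0\}$ are handled by a genuinely different local interpolation estimate, in which the left-hand side keeps the weight $y^\alpha$ while the stronger weight $y^\beta$ appears only on the right-hand side, and it is the grading exponent $\mu>3/(1-\alpha)$ that converts the resulting local rates into the global factor $M^{-1}$; this is the content of \cite[Theorems 4.7 and 4.8]{NOS} and is not recovered by your slab-by-slab inequality. Second, your own bookkeeping only yields the factor $(1+\Y)\approx |\log N|$, not $|\log N|^{s}$, and the claimed sharpening ``by the weighted Sobolev embedding used in \cite{NOS}'' is an assertion rather than an argument: the exponent $s$ on the logarithm is precisely part of what \cite[Theorem 5.4]{NOS} delivers (through the way $\Y\approx \log N$ enters the interpolation and regularity constants), and it is exactly this result that the paper invokes at this point. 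In short, modulo the citation the paper itself makes, your proof is the paper's proof; as a self-contained argument it fails at the first-slab weight comparison and does not actually produce the stated power $|\log N|^{s}$.
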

\begin{proof}
The estimate for the first term is a direct consequence of \eqref{P-approx}, together with the fact that $\mathcal{S}(w) < \infty$ and \cite[Theorem 5.4]{NOS}, 
where the graded mesh \eqref{graded_mesh} on the extended variable $y$ is essential to recover near optimality. 
The bound for the second term is a consequence of the trace
estimate \eqref{Trace_estimate}.
\end{proof}

As with a standard, unweighted, elliptic projection we can obtain improved estimates for the weighted elliptic projection $G_{\T_\Y}$ in the $L^2(\Omega)$ norm via duality.

\begin{proposition}[$L^2(\Omega)$-approximation]
\label{pro:L2}
If $w \in \HL(y^{\alpha},\C_\Y)$, $\mathcal{S}(w) < \infty$ and the mesh $\T_\Y$ is graded as in \eqref{graded_mesh}, then we have
\begin{equation}
 \label{L2-approx-graded}
\|\tr( w - G_{\T_{\Y}} w)\|_{L^2(\Omega)} \lesssim |\log N|^{2s} N^{-\frac{1+s}{n+1}} \mathcal{S}(w),
\end{equation}
where $N = \# \T_{\Y}$ and $\mathcal{S}(w)$ is defined in \eqref{C(w)}.
\end{proposition}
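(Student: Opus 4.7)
The plan is a standard Aubin--Nitsche duality argument, adapted to the weighted, anisotropic setting of the Caffarelli--Silvestre extension. Set $e = w - G_{\T_{\Y}} w$. For an arbitrary $g \in L^2(\Omega)$, introduce the dual problem: find $\phi_g \in \HL(y^{\alpha},\C_{\Y})$ such that $a_{\Y}(\phi_g, v) = (g, \tr v)_{L^2(\Omega)}$ for all $v \in \HL(y^{\alpha},\C_{\Y})$. This is the extension problem for $\calLs \tr \phi_g = g$ in $\Omega$ with homogeneous boundary condition on $\partial\Omega$. Testing with $v = e$ and using the Galerkin orthogonality \eqref{elliptic_projection}, I would write
\[
(g, \tr e)_{L^2(\Omega)} = a_{\Y}(\phi_g, e) = a_{\Y}(\phi_g - G_{\T_{\Y}} \phi_g, e) \lesssim \|\nabla(\phi_g - G_{\T_{\Y}}\phi_g)\|_{L^2(y^{\alpha},\C_{\Y})}\, \|\nabla e\|_{L^2(y^{\alpha},\C_{\Y})},
\]
where Remark~\ref{rem:equivalent} has been used to pass from $a_{\Y}$ to the weighted seminorm.

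The primal error is handled immediately by Lemma~\ref{le:P_optimal}, which gives $\|\nabla e\|_{L^2(y^{\alpha},\C_{\Y})} \lesssim |\log N|^s N^{-1/(n+1)} \mathcal{S}(w)$. It remains to bound the dual error and extract the extra factor $N^{-s/(n+1)}$. Choosing then $g = \tr e/\|\tr e\|_{L^2(\Omega)}$ on the left (or equivalently taking a supremum over unit $g$) will produce
\[
\|\tr e\|_{L^2(\Omega)} \lesssim |\log N|^{2s} N^{-(1+s)/(n+1)} \mathcal{S}(w),
\]
which is the claim.

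The main obstacle is precisely the dual estimate $\|\nabla(\phi_g - G_{\T_{\Y}}\phi_g)\|_{L^2(y^{\alpha},\C_{\Y})} \lesssim |\log N|^s N^{-s/(n+1)} \|g\|_{L^2(\Omega)}$. Since $g$ only lies in $L^2(\Omega)$, elliptic regularity for $\calLs$ places $\tr \phi_g$ in $\mathbb{H}^{2s}(\Omega)$ but \emph{not} in $\mathbb{H}^{1+s}(\Omega)$ (for $s<1$), so $\mathcal{S}(\phi_g)$ need not be finite and Lemma~\ref{le:P_optimal} cannot be applied verbatim to $\phi_g$. To circumvent this I would argue by interpolation: on one side the stability of $G_{\T_{\Y}}$ from \eqref{P-stable} together with the trace estimate \eqref{Trace_estimate} yields the trivial bound $\|\nabla(\phi_g - G_{\T_{\Y}}\phi_g)\|_{L^2(y^{\alpha},\C_{\Y})} \lesssim \|g\|_{\Hsd}$; on the other side, applied to a datum $g \in \mathbb{H}^{1-s}(\Omega)$ for which $\phi_g$ does have $\mathcal{S}(\phi_g) \lesssim \|g\|_{\mathbb{H}^{1-s}(\Omega)}$ (the stationary analogue of Theorem~\ref{TH:regularity}), Lemma~\ref{le:P_optimal} gives the rate $|\log N|^s N^{-1/(n+1)} \|g\|_{\mathbb{H}^{1-s}(\Omega)}$. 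Real interpolation of these two endpoint estimates on the scale $[\Hsd,\mathbb{H}^{1-s}(\Omega)]_\theta$, with the appropriate $\theta$ so that $L^2(\Omega)$ sits at the interpolation index, produces exactly the sought rate $|\log N|^s N^{-s/(n+1)} \|g\|_{L^2(\Omega)}$.

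An alternative, more direct route avoids the abstract interpolation: decompose $\mathcal{S}$ into its tangential and normal parts and exploit that, for $g \in L^2(\Omega)$, the proof of Theorem~\ref{TH:regularity} (elliptic case) still gives $\|\partial_{yy}\phi_g\|_{L^2(y^{\beta},\C_{\Y})} \lesssim \|g\|_{L^2(\Omega)}$, so the normal component of the anisotropic quasi-interpolation estimate from \cite[Thms.~4.7, 4.8]{NOS} contributes the full $N^{-1/(n+1)}$ rate weighted by $\|g\|_{L^2(\Omega)}$, while the tangential component, only controlled in an intermediate fractional norm of order $s$ of $\phi_g$, contributes $N^{-s/(n+1)}$; since the two must be combined multiplicatively through the graded mesh \eqref{graded_mesh}, the worst rate $N^{-s/(n+1)}$ prevails, with the logarithmic factor arising, as in Lemma~\ref{le:P_optimal}, from the grading exponent. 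Either way, the crux of the proof is controlling the dual projector error under the reduced $\mathbb{H}^{2s}$-regularity of $\tr\phi_g$, and this is what produces the suboptimal-by-$(1-s)$ exponent $(1+s)/(n+1)$ and the $|\log N|^{2s}$ factor.
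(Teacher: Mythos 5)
Your main route is correct and reaches the stated rate, but it is genuinely different from the paper's argument in how the low regularity of the dual problem is handled. Both proofs are duality arguments; the crux in either case is that for an $L^2(\Omega)$ datum the dual solution only has $\mathbb{H}^{2s}$ trace regularity, so Lemma~\ref{le:P_optimal} cannot be applied to it directly. You resolve this at the continuous level: you keep the datum $g\in L^2(\Omega)$ and interpolate the linear map $g\mapsto \nabla(\phi_g-G_{\T_\Y}\phi_g)$ between the endpoint bounds for $g\in\Hsd$ (stability of $G_{\T_\Y}$ plus \eqref{Trace_estimate}) and for $g\in\mathbb{H}^{1-s}(\Omega)$ (full regularity $\mathcal{S}(\phi_g)\lesssim\|g\|_{\mathbb{H}^{1-s}(\Omega)}$, then Lemma~\ref{le:P_optimal}), which gives the dual rate $N^{-s/(n+1)}$ at $L^2$ and hence \eqref{L2-approx-graded}. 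The paper instead resolves it at the discrete level: it splits $\|\tr(w-G_{\T_\Y}w)\|_{L^2(\Omega)}$ via the $L^2$-projection $P_{\T_\Omega}$, bounds $\|e-P_{\T_\Omega}e\|_{L^2(\Omega)}$ by $h_{\T_\Omega}^s\|e\|_{\Hs}$, and runs the duality with the \emph{discrete} datum $P_{\T_\Omega}e$, so that the inverse estimate $\|P_{\T_\Omega}e\|_{\mathbb{H}^{1-s}(\Omega)}\lesssim h_{\T_\Omega}^{s-1}\|P_{\T_\Omega}e\|_{L^2(\Omega)}$ substitutes for your interpolation step while still allowing the full-regularity bound $\mathcal{S}(z)\lesssim\|P_{\T_\Omega}e\|_{\Ws}$. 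What your route buys is a cleaner statement (you bound $\tr e$ directly, no projection splitting, no inverse inequality); what the paper's route buys is avoidance of operator interpolation and of any discussion of the intermediate interpolation scale $[\Hsd,\mathbb{H}^{1-s}(\Omega)]_\theta$ (including the $1-s=1/2$ case), at the price of the extra $P_{\T_\Omega}$ term. Two caveats on your write-up: the endpoint regularity $\mathcal{S}(\phi_g)\lesssim\|g\|_{\mathbb{H}^{1-s}(\Omega)}$ must be invoked on the truncated cylinder (as in Remark~\ref{rem:reg_of_v} and the paper's use of \cite[Theorem 2.7]{NOS} for $z$), which is fine but should be said; and your second, ``more direct'' sketch (splitting $\mathcal{S}$ into tangential and normal parts and letting ``the worst rate prevail'') is too vague to stand as a proof --- stick with the interpolation argument.
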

\begin{proof}
Let $\mathcal{E} = w - G_{\T_{\Y}} w$, $e = \tr (w - G_{\T_{\Y}} w)$ and we denote by $P_{\T_{\Omega}}:L^2(\Omega) \rightarrow \U(\T_{\Omega})$ the standard $L^2$-projection. With this notation
$
 \|e\|_{L^2(\Omega)} \leq \|e - P_{\T_{\Omega}}e\|_{L^2(\Omega)} + \|P_{\T_{\Omega}}e\|_{L^2(\Omega)}.
$
The estimate of the first term follows from standard polynomial interpolation and Hilbert space interpolation arguments
\[
 \|e - P_{\T_{\Omega}}e \|_{L^2(\Omega)} \lesssim h_{\T_{\Omega}}^{s} \| e \|_{\Hs}
\lesssim h_{\T_{\Omega}}^{s} \| \nabla \mathcal E \|_{L^2(y^{\alpha},\C_{\Y})} \lesssim \mathcal{S}(w)|\log N|^{s} N^{-\frac{1+s}{n+1}}.
\]
To estimate the remaining term we argue by duality. Let $z \in  \HL(y^{\alpha},\C_{\Y})$ solve 
\begin{equation}
\label{dual_problem}
a_\Y(\phi, z) = \langle P_{\T_{\Omega}}e, \tr \phi \rangle, \quad
\forall  \phi \in \HL(y^{\alpha},\C_{\Y}).
\end{equation}
Set $\phi = \mathcal E$. Using the definition of $P_{\T_{\Omega}}$, that $e = \tr \mathcal{E}$ and \eqref{dual_problem}, we obtain
\[
\| P_{\T_{\Omega}}e \|^2_{L^2(\Omega)} = a_{\Y}(\mathcal{E},z) \lesssim \| \nabla (w-G_{\T_{\Y}}w)\|_{L^2(y^{\alpha},\C_{\Y})} \| \nabla (z - G_{\T_{\Y}}z)\|_{L^2(y^{\alpha},\C_{\Y})}.
\]
Applying Lemma \ref{le:P_optimal} to $z$, in conjunction with $\mathcal{S}(z) \lesssim \|P_{\T_{\Omega}}e \|_{\Ws}$ \cite[Theorem 2.7]{NOS} for $z$, we arrive at
\[
\| \nabla (z - G_{\T_{\Y}}z)\|_{L^2(y^{\alpha},\C_{\Y})} 
\lesssim
|\log N|^{s}N^{-\frac{1}{n+1}} \mathcal{S}(z) \lesssim |\log N|^{s}N^{-\frac{1}{n+1}}\|P_{\T_{\Omega}}e \|_{\Ws}.
\]
The inverse estimate 
$\| P_{\T_{\Omega}}e \|_{\Ws} \lesssim h^{s-1}_{\T_{\Omega}} \| P_{\T_{\Omega}}e \|_{L^2(\Omega)}$
and Lemma~\ref{le:P_optimal} yield
\[
\| P_{\T_{\Omega}} e \|_{L^2(\Omega)} \lesssim \mathcal{S}(w) 
|\log N|^{2s} N^{-\frac{1+s}{n+1}},
\]
which implies the asserted estimate \eqref{L2-approx-graded}.
\end{proof}

\section{A fully discrete scheme for $\gamma \in (0,1]$}
\label{sec:fully_scheme}

Let us now describe a fully discrete numerical scheme to solve \eqref{heat_harmonic_extension_weak_truncated}. The space discretization hinges on the finite element method on a truncated cylinder discussed in \S\ref{sec:space_discretization}. The discretization in time uses the implicit finite difference
schemes proposed in \S\ref{sub:discretization_1} for $\gamma =1$ and in \S\ref{sub:discretization_beta} for $\gamma \in (0,1)$.

The fully discrete scheme computes the sequence $V_{\T_{\Y}}^\tau  \subset \V(\T_{\Y})$, an approximation of the solution to problem \eqref{heat_harmonic_extension_weak_truncated} at each time step. We initialize the scheme by setting
\begin{equation}
\label{initial_data_discrete}
V_{\T_{\Y}}^{0} = \mathcal I_{\T_\Omega}\usf_0,
\end{equation}
where $\mathcal{I}_{\T_\Omega} = G_{\T_\Y} \circ \mathcal{H}_\alpha$ and $\mathcal{H}_\alpha$ is the $\alpha$-harmonic extension onto $\C_\Y$ (see Remark~\ref{rem:initialdata}); notice that $\tr V_{\T_\Y}^0 = \tr G_{\T_\Y} v(0)$.
For $k=0,\dots,\K-1$, let $V_{\T_{\Y}}^{k+1} \in \V(\T_{\Y})$ solve
 \begin{equation}
 \label{fully_beta}
 ( \delta^\gamma \tr V_{\T_{\Y}}^{k+1} , \tr W )_{L^2(\Omega)}  +
a_\Y(V_{\T_{\Y}}^{k+1},W) = \left\langle f^{k+1}, \tr W   \right\rangle, \quad \forall W \in \V(\T_\Y).
 \end{equation}
The discrete operator $\delta^\gamma$ is defined in \eqref{L_discrete} for $\gamma \in (0,1)$ and in \eqref{partial} for $\gamma = 1$. An approximate solution to problem \eqref{fractional_heat} is given by the sequence
$U_{\T_{\Omega}}^\tau  \subset \V(\T_\Omega)$:
\begin{equation}
\label{discrete_Ufd}
 U_{\T_{\Omega}}^\tau = \tr V_{\T_{\Y}}^\tau.
\end{equation}
As before, \eqref{initial_data_discrete}--\eqref{fully_beta} is a discrete elliptic problem with dynamic boundary condition. 


We have the following unconditional stability result.

\begin{lemma}[unconditional stability]
\label{le:stab_beta_full}
The discrete scheme \eqref{initial_data_discrete}--\eqref{fully_beta} is unconditionally stable 
for all $\gamma \in (0,1]$, \ie
\begin{equation}
  I^{1-\gamma}\|\tr V_{\T_{\Y}}^\tau \|^2_{L^2(\Omega)}(T) + \| V_{\T_{\Y}}^{\tau} \|^2_{\ell^2(\HLn(y^{\alpha},\C_{\Y}))}
  \lesssim \Lambda_\gamma( V^0_{\T_\Y}, f^\tau )^2,
\label{stab_beta_fully} 
\end{equation}
where $I^0$ is the identity according to Remark~\ref{RE:militing_case} (case $\gamma=1$).
\end{lemma}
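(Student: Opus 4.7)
The plan is to apply the abstract stability results already proved, namely Theorem~\ref{TH:stabbeta} for $\gamma\in(0,1)$ and Lemma~\ref{le:stab_1} for $\gamma=1$, with the choices $\mathcal{V}=\Hs$, $\mathcal{H}=L^2(\Omega)$, and $\mathfrak{F}=\calL^s$. The fully-discrete scheme \eqref{fully_beta} is structurally identical to the semi-discrete scheme \eqref{discrete_beta}; the only change is that trial and test functions are restricted to the conforming subspace $\V(\T_\Y)\subset \HL(y^\alpha,\C_\Y)$, and $a$ is replaced by $a_\Y$, which by Remark~\ref{rem:equivalent} is still bounded and coercive on $\HL(y^\alpha,\C_\Y)$.

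For $\gamma\in(0,1)$, I would set $W = 2\kappa V_{\T_\Y}^{k+1}$ with $\kappa = \Gamma(2-\gamma)\tau^{\gamma}$ in \eqref{fully_beta}, then expand $\delta^\gamma$ via \eqref{L_discrete}. This produces, on the left-hand side, $2\|\tr V_{\T_\Y}^{k+1}\|_{L^2(\Omega)}^2 + 2\kappa \|V_{\T_\Y}^{k+1}\|_{\HL(y^{\alpha},\C_\Y)}^2$, and on the right-hand side the sum
\[
 2\sum_{j=0}^{k-1}(a_j-a_{j+1})(\tr V_{\T_\Y}^{k-j},\tr V_{\T_\Y}^{k+1})_{L^2(\Omega)} + 2a_k(\tr V_{\T_\Y}^0,\tr V_{\T_\Y}^{k+1})_{L^2(\Omega)} + 2\kappa\langle f^{k+1},\tr V_{\T_\Y}^{k+1}\rangle.
\]
Applying Cauchy--Schwarz on each term, Young's inequality (with the trace inequality \eqref{Trace_estimate} to absorb the duality pairing with $f^{k+1}$ into the $\HL$-coercivity bound), the positivity $a_j-a_{j+1}>0$, and the telescoping identity $\sum_{j=0}^{k-1}(a_j-a_{j+1})=1-a_k$, I obtain, verbatim as in the proof of Theorem~\ref{TH:stabbeta},
\[
\sum_{j=0}^{k} a_j \|\tr V_{\T_\Y}^{k+1-j}\|_{L^2(\Omega)}^2 + \kappa\|V_{\T_\Y}^{k+1}\|_{\HL(y^\alpha,\C_\Y)}^2 \leq \sum_{j=0}^{k-1} a_j \|\tr V_{\T_\Y}^{k-j}\|_{L^2(\Omega)}^2 + a_k\|\tr V_{\T_\Y}^0\|_{L^2(\Omega)}^2 + \kappa\|f^{k+1}\|_{\Hsd}^2.
\]
Summing over $k$ from $0$ to $\K-1$, multiplying by $\tau^{1-\gamma}/\Gamma(2-\gamma)$, and using the identity $\frac{\tau^{1-\gamma}}{\Gamma(2-\gamma)}\sum_{j=0}^{\K-1}a_j\|\tr V_{\T_\Y}^{\K-j}\|_{L^2(\Omega)}^2 = I^{1-\gamma}\|\tr V_{\T_\Y}^\tau\|^2_{L^2(\Omega)}(T)$ already derived in that proof, yields \eqref{stab_beta_fully} upon recognizing the right-hand side as $\Lambda_\gamma(V^0_{\T_\Y},f^\tau)^2$.

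For $\gamma=1$, the argument collapses to that of Lemma~\ref{le:stab_1}: choose $W=2\tau V_{\T_\Y}^{k+1}$ in \eqref{fully_beta}, use the polarization identity $2(a-b,a)=|a|^2-|b|^2+|a-b|^2$ on the discrete time derivative term, apply \eqref{Trace_estimate} and Young's inequality to handle the forcing, and telescope over $k$; since $I^0$ is the identity, the resulting bound matches \eqref{stab_beta_fully}.

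I do not anticipate a genuine obstacle: conformity $\V(\T_\Y)\subset \HL(y^\alpha,\C_\Y)$ means every estimate used at the continuous level (trace inequality, coercivity of $a_\Y$, Cauchy--Schwarz with respect to the $L^2(\Omega)$ inner product on the trace) survives the restriction. The only minor point worth spelling out is that the initial datum $V_{\T_\Y}^0=\mathcal{I}_{\T_\Omega}\usf_0$ appears only through $\|\tr V_{\T_\Y}^0\|_{L^2(\Omega)}$ on the right, which is exactly the quantity absorbed into $\Lambda_\gamma(V^0_{\T_\Y},f^\tau)$ by the convention in \eqref{Lambda}.
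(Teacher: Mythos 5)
Your proposal is correct and follows essentially the same route as the paper: the paper's proof consists precisely of choosing $W = 2\tau V_{\T_\Y}^{k+1}$ (for $\gamma=1$) or $W = 2\Gamma(2-\gamma)\tau^{\gamma} V_{\T_\Y}^{k+1}$ (for $\gamma\in(0,1)$) in \eqref{fully_beta} and repeating the arguments of Lemma~\ref{le:stab_1} and Theorem~\ref{TH:stabbeta}, which is exactly what you do, with conformity of $\V(\T_\Y)\subset\HL(y^\alpha,\C_\Y)$ justifying the transfer of the estimates.
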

\begin{proof}
Set $W = 2 \tau V_{\T_{\Y}}^{k+1}$ for $\gamma = 1$ and $W = 2 \Gamma(2-\gamma)\tau^{\gamma} V_{\T_{\Y}}^{k+1}$ for $ 0 < \gamma <1 $ in \eqref{fully_beta} and proceed as in Lemma~\ref{le:stab_1} and Theorem \ref{TH:stabbeta}, respectively.
\end{proof}

Let us now obtain an error estimate for the fully discrete scheme \eqref{fully_beta}.
We split the error into the so-called interpolation and approximation errors \cite{Guermond-Ern,Thomee}: 
\[
v^{\tau} - V_{\T_{\Y}}^\tau  = \left( v^{\tau}- G_{\T_{\Y}}v^{\tau} \right) + ( G_{\T_{\Y}}v^{\tau} - V_{\T_{\Y}}^\tau )
= \eta^{\tau} + E_{\T_{\Y}}^\tau.
\]
Property \eqref{P_optimal} implies that $\eta$ is controlled near-optimally in energy
\begin{equation}
\| \nabla \eta^{\tau} \|_{\ell^2(L^2(y^{\alpha},\C_\Y))} \lesssim  |\log N|^s N^{-1/(n+1)} \| \mathcal{S}(v^{\tau}) \|_{L^2(0,T)}.
\end{equation}
Estimate \eqref{eq:S_in_spacetime}, Corollary~\ref{CR:Sut} and Remark~\ref{rem:reg_of_v} imply that $\mathcal{S}(v) \in W_1^1(0,T)$, whence
\begin{equation}
\label{eta_td}
\| \nabla \eta^{\tau} \|_{\ell^2(L^2(y^{\alpha},\C_\Y))} \lesssim |\log N|^s N^{\frac{-1}{n+1}} \mathcal{B}(\usf_0,f),
\end{equation}
since $\mathcal{R}(\usf_0,f) \leq \mathcal{B}(\usf_0,f)$. Similar arguments, together with \eqref{L2-approx-graded}, allow us to conclude an approximation result in the $L^2$-norm for the trace
\begin{align}
\label{tr_etad}
I^{1-\gamma}\| \tr\eta^{\tau} \|_{L^2(\Omega)}(T) \lesssim |\log N|^{2s} N^{-\frac{1+s}{n+1}} 
I^{1-\gamma} \mathcal{B}(\usf_0,f)(T).
\end{align}
The error estimates for \eqref{initial_data_discrete}--\eqref{fully_beta} read as follows.

\begin{theorem}[error estimates: $\gamma \in (0,1)$]
\label{th:order_beta}
Let $\gamma \in (0,1)$, $v$ and $V_{\T_{\Y}}^\tau$ solve \eqref{heat_harmonic_extension_weak_truncated} and \eqref{initial_data_discrete}--\eqref{fully_beta}, respectively. If $\mathcal{A}(\usf_0,f), \mathcal{B}(\usf_0,f) < \infty$ and $\T_{\Y}$ verifies \eqref{graded_mesh}, then
\begin{equation}\label{estimate_1}
[ I^{1-\gamma}\| \tr (v^\tau - V_{\T_{\Y}}^\tau) \|^2_{L^2(\Omega)}(T)]^{\tfrac{1}{2}} \lesssim 
\tau^{\theta} \mathcal{A}(\usf_0,f) + |\log N|^{2s} N^{-\frac{1+s}{n+1}} \mathcal{B}(\usf_0,f),
\end{equation}
and
\begin{equation}\label{estimate_2}
\| v^\tau - V_{\T_{\Y}}^\tau \|_{\ell^{2}(\HLn(y^{\alpha},\C_\Y) )} \lesssim \tau^{\theta} \mathcal{A}(\usf_0,f) 
  + |\log N|^{s} N^\frac{-1}{n+1} \mathcal{B}(\usf_0,f),
\end{equation}
where $\mathcal{A}$ and $\mathcal{B}$ are defined in \eqref{A} and \eqref{eq:frakB}, respectively, $0<\theta<\tfrac{1}{2}$, and the hidden constants blow up as $\theta \uparrow \tfrac12$.
\end{theorem}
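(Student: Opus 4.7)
My plan is to follow the standard parabolic finite-element decomposition via the weighted elliptic projector, $v^{k+1}-V_{\T_{\Y}}^{k+1}=\eta^{k+1}+E_{\T_{\Y}}^{k+1}$, with interpolation error $\eta^{k+1}:=v^{k+1}-G_{\T_{\Y}}v^{k+1}$ and approximation error $E_{\T_{\Y}}^{k+1}:=G_{\T_{\Y}}v^{k+1}-V_{\T_{\Y}}^{k+1}$. The interpolation term has already been controlled in \eqref{eta_td}--\eqref{tr_etad}, which deliver exactly the $|\log N|^{2s}N^{-(1+s)/(n+1)}\mathcal{B}$ and $|\log N|^{s}N^{-1/(n+1)}\mathcal{B}$ rates appearing on the right-hand sides of \eqref{estimate_1} and \eqref{estimate_2}. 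It therefore suffices to bound $E_{\T_{\Y}}^{\tau}$ at no worse a rate, together with the time-consistency contribution of order $\tau^{\theta}\mathcal{A}$.

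Subtracting \eqref{fully_beta} from the continuous equation \eqref{heat_harmonic_extension_weak_truncated} evaluated at $t_{k+1}$ and rewritten using \eqref{discretization_fractional} gives, for every $W\in\V(\T_{\Y})$,
\begin{equation*}
(\delta^\gamma \tr(v^{k+1}-V_{\T_{\Y}}^{k+1}),\tr W)_{L^2(\Omega)} + a_\Y(v^{k+1}-V_{\T_{\Y}}^{k+1},W) = -\langle \resto_\gamma^{k+1},\tr W\rangle.
\end{equation*}
Galerkin orthogonality of the elliptic projector, $a_\Y(\eta^{k+1},W)=0$, reduces this to the error equation
\begin{equation*}
(\delta^\gamma \tr E_{\T_{\Y}}^{k+1},\tr W)_{L^2(\Omega)} + a_\Y(E_{\T_{\Y}}^{k+1},W) = -\langle \resto_\gamma^{k+1},\tr W\rangle -(\delta^\gamma \tr \eta^{k+1},\tr W)_{L^2(\Omega)}.
\end{equation*}
Since $V_{\T_{\Y}}^0=G_{\T_{\Y}}v^0$ by \eqref{initial_data_discrete} and Remark~\ref{rem:initialdata}, we have $E_{\T_{\Y}}^0=0$. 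Running the abstract stability argument of Theorem~\ref{TH:stabbeta} with $\Vcal=\Hs$ and $\Hcal=L^2(\Omega)$, and treating the right-hand side as a forcing in $\Hsd$ via the embedding $\Hs\hookrightarrow L^2(\Omega)\hookrightarrow\Hsd$, then yields
\begin{equation*}
I^{1-\gamma}\|\tr E_{\T_{\Y}}^\tau\|_{L^2(\Omega)}^2(T) + \|E_{\T_{\Y}}^\tau\|_{\ell^2(\HLn(y^\alpha,\C_{\Y}))}^2 \lesssim \|\resto_\gamma^\tau\|_{\ell^2(\Hsd)}^2 + \|\delta^\gamma\tr\eta^\tau\|_{\ell^2(\Hsd)}^2.
\end{equation*}
The consistency bound \eqref{consistency} handles the first right-hand-side term with $\tau^{2\theta}\mathcal{A}(\usf_0,f)^2$.

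The hard part will be the second term, a discrete fractional derivative of the spatial interpolation error. Linearity of $G_{\T_{\Y}}$ gives $\tr\eta_t=\tr(I-G_{\T_{\Y}})v_t$, so Proposition~\ref{pro:L2} combined with $L^2(\Omega)\hookrightarrow\Hsd$ produces the pointwise-in-time bound $\|\tr\eta_t(\cdot,t)\|_{\Hsd}\lesssim |\log N|^{2s}N^{-(1+s)/(n+1)}\mathcal{S}(v_t(\cdot,t))$; Remark~\ref{rem:reg_of_v} ensures the regularity theory for $\ve$ transfers to $v$. Inserting $\delta^1\tr\eta^{k+1-j}=\tau^{-1}\int_{I_{k-j}}\tr\eta_t\diff t$ in \eqref{L_discrete}, I recognize $\sum_{j=0}^{k}(a_j/\tau^\gamma)\int_{I_{k-j}}\mathcal{S}(v_t)\diff t$ as the discrete operator $\delta^\gamma$ applied to the primitive of $\mathcal{S}(v_t)$, whose continuous counterpart is precisely $I^{1-\gamma}\mathcal{S}(v_t)$. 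A discrete-to-continuous comparison together with the $L^2$ control $\|I^{1-\gamma}\mathcal{S}(v_t)\|_{L^2(0,T)}\lesssim\mathcal{B}(\usf_0,f)$ from Corollary~\ref{CR:Sut} then delivers
\begin{equation*}
\|\delta^\gamma\tr\eta^\tau\|_{\ell^2(\Hsd)} \lesssim |\log N|^{2s}N^{-(1+s)/(n+1)}\mathcal{B}(\usf_0,f).
\end{equation*}
The delicate technical point is exactly this discrete-to-continuous step: $\mathcal{S}(v_t)\sim t^{\gamma-1}$ fails to be $L^2$ for $\gamma\le\tfrac12$, so one cannot simply invoke Young's inequality and must instead rely, as in Corollary~\ref{CR:Sut}, on the Hardy--Littlewood-type smoothing effect of $I^{1-\gamma}$. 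Once this bound is in hand, inserting it together with the consistency estimate and the interpolation bounds \eqref{eta_td}, \eqref{tr_etad} into the triangle inequality for $v^\tau-V_{\T_{\Y}}^\tau$ — and observing that $N^{-(1+s)/(n+1)}\le N^{-1/(n+1)}$ so the approximation-error contribution never dominates the interpolation rate in the energy norm — yields \eqref{estimate_1} and \eqref{estimate_2}.
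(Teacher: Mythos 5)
Your proposal is correct and follows essentially the same route as the paper: the same splitting $v^\tau-V_{\T_\Y}^\tau=\eta^\tau+E_{\T_\Y}^\tau$, the same error equation (your right-hand side $-\resto_\gamma^{k+1}-\delta^\gamma\tr\eta^{k+1}$ is exactly the paper's $\tr\omega_1^{k+1}+\tr\omega_2^{k+1}$), discrete stability plus Proposition~\ref{pro:consistency} for the $\tau^\theta\mathcal{A}$ part, and Proposition~\ref{pro:L2} combined with the identification of the discrete sum as $I^{1-\gamma}$ of local averages of $\mathcal{S}(v_t)$, handled via the $L\log L$/Hardy--Littlewood argument of Corollary~\ref{CR:Sut} for $\gamma\le\tfrac12$, before collecting with \eqref{eta_td}--\eqref{tr_etad}. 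You even flag the same delicate point ($\mathcal{S}(v_t)\sim t^{\gamma-1}\notin L^2$ for $\gamma\le\tfrac12$) that the paper resolves in the same way, so no substantive difference remains.
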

\begin{proof}
Using the continuous problem \eqref{heat_harmonic_extension_weak_truncated}, the discrete equation \eqref{fully_beta}, and
the definition \eqref{elliptic_projection} of $G_{\T_{\Y}}$, we arrive at the equation that controls the error:
\begin{equation}
\label{discrete_eq_e_beta}
(\delta^\gamma \tr E_{\T_{\Y}}^{k+1}, \tr W )_{L^2(\Omega)}  + a_\Y(E_{\T_{\Y}}^{k+1},W) = \left\langle \tr \omega^{k+1}, \tr W   \right\rangle \quad  W \in \V(\T_{\Y}),
\end{equation}
where
$
  \omega^{k+1} = \delta^\gamma G_{\T_{\Y}}v(t_{k+1}) - \partial^{\gamma}_t v(t_{k+1}).
$
Estimate \eqref{stab_beta_fully} applied to \eqref{discrete_eq_e_beta} yields
\[
I^{1-\gamma}\| \tr E_{\T_{\Y}}^{\tau} \|^2_{L^2(\Omega)}(T) + \| E_{\T_{\Y}}^{\tau} \|^2_{\ell^2(\HLn(y^{\alpha},\C_{\Y}))} \lesssim \| \tr \omega^\tau \|_{\ell^2(\Hsd)}^2,
\]
because $\tr E^0_{\T_{\Y}} = 0$. We decompose $\omega^{k+1}$ as $\omega^{k+1} = \omega_1^{k+1} + \omega_2^{k+1}$
with
%
\[
\omega_1^{k+1} := \left( \delta^\gamma v(t_{k+1}) - \partial^{\gamma}_t v(t_{k+1}) \right),
\qquad \omega_2^{k+1} := \delta^\gamma \left( G_{\T_{\Y}}v(t_{k+1}) - v(t_{k+1}) \right).
\]
The first term is controlled using Proposition~\ref{pro:consistency}. For $\theta \in (0,\tfrac{1}{2})$ we have
\[
\| \tr \omega_1^\tau\|_{\ell^2(\Hsd)} \lesssim \tau^{\theta}\mathcal{A}(\usf_0,f),
\]
with a hidden constant that blows up as $\theta \uparrow \tfrac12$. To estimate $\omega_2^{k+1}$ we use \eqref{discretization_fractional} and \eqref{L_discrete} to write
\begin{align*}
\omega_2^{k+1} 
 = \frac{1}{\Gamma(2-\gamma)}\sum_{j=0}^{k}
\frac{a_j}{ \tau^{\gamma}} \int_{I_{k-j}} ( I - G_{\T_{\Y}} ) \partial_{t} v(s) \diff s,
\end{align*}
and use Proposition \ref{pro:L2} together with $\| \tr \omega_2^{k+1}\|_{\Hsd} \lesssim \| \tr \omega_2^{k+1}\|_{L^2(\Omega)}$ to obtain
\begin{equation*}
\| \tr \omega_2^{k+1} \|_{\Hsd} \lesssim \frac{\tau^{1-\gamma}}{\Gamma(2-\gamma)}
|\log N |^{2s} N^{-\frac{1+s}{n+1}} \sum_{j=0}^k a_j \fint_{I_{k-j}} \mathcal{S}( \partial_t v(s) ) \diff s.
\end{equation*}
If $Z^{\tau} := \big\{ \fint_{I_j} \mathcal{S}( \partial_t v(s) ) \diff s \big\}_{j=0}^{\K-1}$, then the definition of the fractional integral \eqref{fractional_integral} in conjunction with \eqref{discretization_fractional} implies
\[
 \| \tr \omega_2^{k+1}\|_{\Hsd} \lesssim  |\log N|^{2s} N^\frac{-(1+s)}{n+1}I^{1-\gamma} Z^{\tau}(t_{k+1}).
\]
We recall that, according to \eqref{Sut} and Remark \ref{rem:reg_of_v}, $\mathcal{S}(\partial_t v(s) ) \lesssim s^{\gamma-1} \mathcal{B}(\usf_0,f)$. We argue with $Z^{\tau}$ as in Corollary~\ref{CR:Sut} to obtain
\[
\| I^{1-\gamma} Z^{\tau}\|_{L^2(0,T)} \lesssim \| Z^{\tau} \|_{\Xcal}
\]
where $\Xcal = L^2(0,T)$ if $\gamma \in (\frac{1}{2},1)$ and $\Xcal = L \log L (0,T)$ if $\gamma \in (0,\frac{1}{2}]$.
We next use that local averages are continuous in $\Xcal$ to deduce
\[
  \|\tr \omega_2^\tau \|_{\ell^2(\Hsd)} \lesssim 
  |\log N |^{2s} N^{-\frac{1+s}{n+1}} \mathcal{B}(\usf_0,f).
\]
Collecting all the previous estimates together with 
\eqref{eta_td} and \eqref{tr_etad}, allows us to obtain the desired results.
\end{proof}

\begin{remark}[estimate for $\usf$: $\gamma \in (0,1)$] \rm
In the framework of Theorem~\ref{th:order_beta}, and in view of \eqref{exp_convergence}, we deduce the following error estimates for $\usf$
\begin{align*}
  \left[ I^{1-\gamma}\| \usf^{\tau} - U^{\tau} \|^2_{L^2(\Omega)}(T) \right]^{\frac{1}{2}}  &\lesssim 
    \tau^\theta \mathcal{A}(\usf_0,f)
    + |\log N |^{2s} N^\frac{-(1+s)}{n+1} \mathcal{B}(\usf_0,f) \\
    & + e^{-\frac{\sqrt{\lambda_1}}{2}\Y} \Lambda_\gamma(\usf_0,f), \\
  \| \usf^{\tau} - U^{\tau} \|_{\ell^{2}(\Hs )}  \lesssim \tau^\theta \mathcal{A}(\usf_0,f)
  &+ |\log N|^{s} N^\frac{-1}{n+1} \mathcal{B}(\usf_0,f) + e^{-\frac{\sqrt{\lambda_1}}{2}\Y } \Lambda_\gamma(\usf_0,f),
\end{align*}
where $0 <\theta < \tfrac{1}{2}$ and $\mathcal{A}$, $\mathcal{B}$ and $\Lambda_\gamma$ are defined by \eqref{A}, \eqref{eq:frakB} and \eqref{Lambda}, respectively.
\end{remark}

To conclude we establish error estimates for $\gamma = 1$. Denote
\[
  C(\usf_0, f) = \| \usf_0 \|_{\mathbb{H}^{2s}(\Omega)} + \| f \|_{BV(0,T;L^2(\Omega))}.
\]
The estimates read as follows.
\begin{theorem}[error estimates: $\gamma = 1$]
Let $\gamma = 1$, $v$ and $V_{\T_{\Y}}^\tau$ solve 
\eqref{heat_harmonic_extension_weak_truncated} and
\eqref{initial_data_discrete}--\eqref{fully_beta}, respectively. 
If $\T_{\Y}$ is graded according to \eqref{graded_mesh}, then 
\begin{align*}
  \| \tr (v^\tau - V_{\T_{\Y}}^\tau) \|_{\ell^{\infty}(L^2(\Omega))}  &\lesssim \tau C(\usf_0,f) 
  + |\log N|^{2s} N^\frac{-(1+s)}{n+1} \| \mathcal{S}(v_t) \|_{W_1^1(0,T)}, \\
  \| v^\tau - V_{\T_{\Y}}^\tau \|_{\ell^{2}(\HLn(y^{\alpha},\C_\Y) )}  &\lesssim \tau C(\usf_0 f)
  +|\log N |^{s} N^\frac{-1}{n+1} \| \mathcal{S}(v_t) \|_{W_1^1(0,T)},
\end{align*} 
where the hidden constants are independent of the data, $N$ and $\tau$.
\end{theorem}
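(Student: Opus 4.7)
The proof proceeds in the same spirit as Theorem~\ref{th:order_beta}, but with $\gamma=1$ we replace the fractional stability bound and the Mittag--Leffler--based consistency estimate by their standard parabolic counterparts. First I would split
\[
v^\tau - V_{\T_{\Y}}^\tau = \big( v^\tau - G_{\T_{\Y}} v^\tau \big) + \big( G_{\T_{\Y}} v^\tau - V_{\T_{\Y}}^\tau \big) = \eta^\tau + E_{\T_{\Y}}^\tau,
\]
handle $\eta^\tau$ by the elliptic-projector estimates, and control $E_{\T_{\Y}}^\tau$ by the stability of the fully discrete scheme applied to the error equation it satisfies.

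For the projection part, Proposition~\ref{pro:L2} and Lemma~\ref{le:P_optimal} applied pointwise in time yield, respectively, $\|\tr\eta(t)\|_{L^2(\Omega)}\lesssim |\log N|^{2s} N^{-(1+s)/(n+1)}\mathcal{S}(v(t))$ and $\|\nabla\eta(t)\|_{L^2(y^\alpha,\C_\Y)}\lesssim |\log N|^{s} N^{-1/(n+1)}\mathcal{S}(v(t))$. Since Remark~\ref{rem:reg_of_v} transfers the regularity of Theorem~\ref{TH:regularity} to $v$ and Corollary~\ref{CR:Sut} with $\gamma=1$ gives $\mathcal{S}(v_t)\in L^1(0,T)$, the fundamental theorem of calculus yields $\mathcal{S}(v)\in L^\infty(0,T)$ with $\sup_t\mathcal{S}(v(t))\lesssim\|\mathcal{S}(v_t)\|_{W_1^1(0,T)}$, which produces the desired $\ell^\infty(L^2(\Omega))$ and $\ell^2(\HL(y^\alpha,\C_\Y))$ bounds for $\eta^\tau$.

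For the discrete part, subtracting \eqref{fully_beta} from \eqref{heat_harmonic_extension_weak_truncated} tested at $t_{k+1}$ and inserting $G_{\T_\Y}$ via its defining identity \eqref{elliptic_projection} gives
\[
(\delta^1 \tr E_{\T_{\Y}}^{k+1}, \tr W)_{L^2(\Omega)} + a_\Y(E_{\T_{\Y}}^{k+1},W) = \langle \tr\omega^{k+1}, \tr W\rangle,
\]
with $\omega^{k+1} = \omega_1^{k+1} + \omega_2^{k+1}$, $\omega_1^{k+1} := \delta^1 v(t_{k+1}) - \partial_t v(t_{k+1})$ and $\omega_2^{k+1} := \delta^1\big(G_{\T_{\Y}}v - v\big)(t_{k+1})$, and with $\tr E_{\T_\Y}^0=0$ because of \eqref{initial_data_discrete} and Remark~\ref{rem:initialdata}. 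The time-consistency term $\omega_1$ is exactly the residual analyzed in \S\ref{sub:ErrTimedisc}: the semi-discrete estimate \eqref{eq:semi-discrete1D} of Theorem~\ref{thm:semidicserr} (Rulla's argument) gives the $\tau\,C(\usf_0,f)$ contribution to both norms under the assumptions $\usf_0\in\mathbb{H}^{2s}(\Omega)$ and $f\in BV(0,T;L^2(\Omega))$. The spatial residual admits the integral representation $\omega_2^{k+1}=\tau^{-1}\int_{I_k}(I-G_{\T_{\Y}})\partial_t v(s)\diff s$; applying Proposition~\ref{pro:L2} and Lemma~\ref{le:P_optimal} under the integral and then using $\|\cdot\|_{\Hsd}\lesssim\|\cdot\|_{L^2(\Omega)}$ yields
\[
\|\tr\omega_2^{k+1}\|_{L^2(\Omega)}\lesssim |\log N|^{2s}N^{-(1+s)/(n+1)}\, \tau^{-1}\!\int_{I_k}\mathcal{S}(\partial_t v(s))\diff s,
\]
and summing over $k$ the right-hand side is controlled by $\|\mathcal{S}(v_t)\|_{W_1^1(0,T)}$ in both $\ell^\infty$ and $\ell^2$. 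Combining these bounds with the unconditional stability of Lemma~\ref{le:stab_beta_full} (equivalently Lemma~\ref{le:stab_1}) applied to $E_{\T_\Y}^\tau$ closes the argument.

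The main obstacle is that, under the stated low regularity on the data, the time-consistency residual $\omega_1$ cannot be controlled by a naive $\|\omega_1^\tau\|_{\ell^2(\Hsd)}\lesssim\tau$ estimate, since $v_{tt}$ is not square-integrable in general; this is precisely what forces us to invoke the Rulla/Nochetto--Savar\'e--Verdi machinery via \eqref{eq:semi-discrete1D} instead of a direct energy argument on $\omega_1$. A secondary technicality is the bookkeeping needed to pass the $W_1^1(0,T)$ norm of $\mathcal{S}(v_t)$ through the time averages in $\omega_2$, which is routine once the integral representation is in place.
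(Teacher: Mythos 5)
Your overall strategy matches the paper's intent: the paper's proof is a one-line reference to the arguments of Theorem~\ref{thm:semidicserr}, Theorem~\ref{th:order_beta} and \cite[Theorem 3.20]{NSV:00}, i.e., elliptic-projection estimates for the spatial part plus the Rulla/Nochetto--Savar\'e--Verdi argument for the first-order-in-time part, and your treatment of $\eta^\tau$ and of $\omega_2$ is exactly the Theorem~\ref{th:order_beta} mechanism specialized to $\gamma=1$.

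There is, however, a genuine structural gap in how you handle the temporal residual. You set up the error equation for $E_{\T_\Y}^{k+1}$ with right-hand side $\omega_1^{k+1}+\omega_2^{k+1}$ and claim discrete stability closes the argument, but then (correctly) observe that $\omega_1^{k+1}=\delta^1 v(t_{k+1})-\partial_t v(t_{k+1})$ cannot be bounded by $\tau$ in $\ell^2(\Hsd)$ under $\usf_0\in\mathbb{H}^{2s}(\Omega)$, $f\in BV(0,T;L^2(\Omega))$, and propose to ``invoke \eqref{eq:semi-discrete1D}'' instead. That does not close the argument as written: \eqref{eq:semi-discrete1D} is an estimate for the difference between the exact solution and the semi-discrete solution, not a dual-norm bound on the consistency residual $\omega_1$, and the Rulla/NSV machinery operates on the scheme itself, not on a residual fed into a stability estimate. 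To make your proof rigorous you must restructure the splitting: insert the semi-discrete solution $\Vt^\tau$ of \eqref{discrete_beta_truncated} as an intermediate quantity, write $v^\tau-V_{\T_\Y}^\tau=(v^\tau-\Vt^\tau)+(\Vt^\tau-V_{\T_\Y}^\tau)$, bound the first difference by the truncated analogue of \eqref{eq:semi-discrete1D} (asserted after \eqref{discrete_beta_truncated}), and then compare the two fully discrete-in-time evolutions $\Vt^\tau$ and $V_{\T_\Y}^\tau$; the error equation for that comparison contains only the spatial residual of $\omega_2$ type (plus the projected initial datum), so Lemma~\ref{le:stab_1}, Proposition~\ref{pro:L2} and Lemma~\ref{le:P_optimal} yield the $|\log N|^{2s}N^{-(1+s)/(n+1)}$ and $|\log N|^{s}N^{-1/(n+1)}$ terms. (Alternatively, rerun the argument of \cite[Theorem 3.20]{NSV:00} directly at the fully discrete level with the spatial terms as perturbations, but some version of this restructuring is unavoidable.) Two minor points: Corollary~\ref{CR:Sut} is stated only for $\gamma\in(0,1)$, so you cannot cite it ``with $\gamma=1$''---the finiteness of $\|\mathcal{S}(v_t)\|_{W_1^1(0,T)}$ is simply part of the hypotheses implicit in the statement; and for the $\ell^\infty$ bookkeeping of the time averages of $\mathcal{S}(v_t)$ you need the embedding $W_1^1(0,T)\hookrightarrow L^\infty(0,T)$, which is precisely why the $W_1^1$ norm, and not the $L^1$ norm, appears on the right-hand side.
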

\begin{proof}
The proof is standard and relies on the arguments developed in
Theorem~\ref{thm:semidicserr}, Theorem~\ref{th:order_beta} and \cite[Theorem 3.20]{NSV:00}.
\end{proof}

\section*{Acknowledgments}
We would like to thank W. McLean for pointing out a flaw in our original manuscript, and
the two referees for several comments and suggestions that led to much better results and improved presentation.

\bibliographystyle{plain}
\bibliography{biblio}

\end{document}